\documentclass[11pt]{article}
\usepackage{amsmath,amsfonts,amssymb,amsthm,fancyhdr,bm,mathtools,enumitem}
\usepackage[hyphens]{url}
\usepackage[hidelinks]{hyperref}

\usepackage{fullpage}
\usepackage[dvipsnames]{xcolor}
\usepackage[capitalize]{cleveref}
\setlength{\marginparwidth}{2cm}
\usepackage[color=Purple!50!white]{todonotes}
\usepackage[numbers,sort]{natbib}
\usepackage{tikz,ifthen}

\usetikzlibrary{decorations.markings,arrows.meta}
\tikzset{vert/.style={draw, fill=black, circle, inner sep=2pt}}

\newtheorem{theorem}{Theorem}[section]
\newtheorem{lemma}[theorem]{Lemma}
\newtheorem{proposition}[theorem]{Proposition}

\newtheorem{conjecture}[theorem]{Conjecture}
\newtheorem{question}[theorem]{Question}

\theoremstyle{definition}
\newtheorem{definition}[theorem]{Definition}

\crefname{equation}{equation}{equations}
\crefname{lemma}{Lemma}{Lemmas}
\crefname{proposition}{Proposition}{Propositions}
\crefname{claim}{Claim}{Claims}
\crefname{theorem}{Theorem}{Theorems}
\crefname{conjecture}{Conjecture}{Conjectures}
\crefname{figure}{Figure}{Figures}

\newlist{lemenum}{enumerate}{1}
\setlist[lemenum]{label=(\alph*), ref=\thelemma(\alph*)}
\crefalias{lemenumi}{lemma}

\DeclareMathOperator*\pr{Pr}

\DeclareMathOperator\twr{twr}
\DeclareMathOperator\wow{wow}

\newcommand\up[1]{^{(#1)}}
\newcommand\wh[1]{\widehat{#1}}
\newcommand\wt[1]{\widetilde{#1}}
\newcommand\ab[1]{\lvert#1\rvert}

\newcommand{\F}{\mathcal{F}}
\newcommand{\R}{\mathbb{R}}
\newcommand{\N}{\mathbb{N}}
\newcommand{\V}{\mathcal{V}}
\newcommand{\Q}{\mathcal{Q}}
\newcommand{\p}{\mathcal{P}}
\newcommand{\h}{\mathcal{H}}
\newcommand\1{\mathbf{1}}

\newcommand\symmd{\mathbin{\triangle}}
\newcommand\dtimes{\mathbin{\times \dotsb \times}}

\DeclareMathOperator*{\E}{\mathbb{E}}

\DeclareMathOperator{\poly}{poly}

\let\leq\leqslant
\let\geq\geqslant

\title{Regularity for hypergraphs with bounded VC\texorpdfstring{$_2$}{\_2} dimension}
\author{Lior Gishboliner\thanks{Department of Mathematics, University of Toronto, ON, Canada. {Email}: \texttt{lior.gishboliner@utoronto.ca}. Research supported by the NSERC Discovery Grant ``Problems in Extremal and Probabilistic Combinatorics".}\and Asaf Shapira\thanks{School of Mathematics, Tel Aviv University, Tel Aviv 69978, Israel. Email: \texttt{asafico@tau.ac.il}. Supported in part by ERC Consolidator Grant 863438.} \and Yuval Wigderson\thanks{Institute for Theoretical Studies, ETH Z\"urich, 8006 Z\"urich, Switzerland. Email: \texttt{yuval.wigderson@eth-its. ethz.ch}. Research supported by Dr.\ Max R\"ossler, the Walter Haefner Foundation, and the ETH Z\"urich Foundation.}}

\date{}

\begin{document}
\maketitle
\begin{abstract}
While Szemer\'edi's graph regularity lemma is an indispensable tool for studying extremal problems in graph theory, using
it comes with a hefty price, since in the worst case, a graph may only have regular partitions of tower-type order. It is thus
sensible to ask if there is some natural restriction which forces graphs to have much smaller regular partitions.
A celebrated result of this type, due to Alon--Fischer--Newman and Lov\'asz--Szegedy, states that for graphs of bounded VC
dimension, one can reduce the tower-type bounds to polynomial.

The graph regularity lemma has been extended to the setting of $k$-graphs by Gowers and by R\"odl et al.
Unfortunately, these lemmas come with even larger Ackermann-type bounds. Chernikov--Starchenko and Fox--Pach--Suk considered a strong notion
of $k$-graph VC dimension and proved that $k$-graphs of bounded VC dimension have regular partitions of polynomial size.
Shelah introduced a weaker and combinatorially more natural notion of dimension, called VC$_2$ dimension, which has been
extensively studied in the past decade. In particular, Chernikov, Towsner, Terry, and Wolf asked if one can improve the worst case bounds for $3$-graph regularity when the $3$-graph
has bounded VC$_2$ dimension. 
Our main result in this paper answers this question positively in the following strong sense: 
we show that in the setting of bounded VC$_2$ dimension, one can reduce the bounds for 3-graph regularity by 
one level in Ackermann hierarchy. Furthermore, our new improved bound is best possible.

Our proof has two key steps. We first introduce a novel method for designing regularity lemmas for graphs of bounded VC dimension, based on the cylinder regularity lemma. We then prove a hypergraph version of the cylinder regularity lemma, which allows us to extend this new method to hypergraphs. We additionally highlight a few other applications of this cylinder regularity lemma, which we expect to find many other uses.

\end{abstract}

\section{Introduction}
\subsection{Background}
\subsubsection{Graph regularity}\label{sec:graph regularity}
One of the deepest and most powerful results in modern graph theory is Szemer\'edi's regularity lemma \cite{MR0540024}. Roughly speaking, it states that any graph can be approximated as the union of a constant number of ``random-like'' graphs, where the number of such pieces depends only on the quality of the approximation and on how random-like we demand the pieces to be. This foundational result has found many applications in graph theory, number theory, theoretical computer science, and beyond.

To state (one version of) the regularity lemma, let $X,Y$ be vertex subsets of some graph $G$. We denote by $e(X,Y)$ the number of pairs in $X \times Y$ that are edges of $G$, and by $d(X,Y) \coloneqq e(X,Y)/(\ab X \ab Y)$ the \emph{edge density} of the pair. We say that the pair $(X,Y)$ is \emph{$\varepsilon$-quasirandom} if, for all $A \subseteq X, B \subseteq Y$, we have that $\ab{e(A,B) - d(X,Y)\ab A \ab B} \leq \varepsilon \ab X \ab Y$. Szemer\'edi's regularity lemma then states that the vertex set of any graph $G$ can be partitioned as $V(G) = V_1 \cup \dots \cup V_K$ such that at least a $(1-\varepsilon)$-fraction of the pairs $(x,y) \in V(G)^2$ lie in an $\varepsilon$-quasirandom pair $(V_i,V_j)$, and moreover such that $K$ is a constant depending only on~$\varepsilon$.

Szemer\'edi's original proof shows that $K$ can be upper-bounded as a tower function\footnote{We recall that the \emph{tower function} is defined by $\twr(0)=1$ and $\twr(x+1)=2^{\twr(x)}$ for $x \geq 0$.} of $\varepsilon^{-O(1)}$, and a famous construction of Gowers \cite{MR1445389} shows that such a tower-type bound is necessary in the worst case. Unfortunately, in many applications, these tower-type losses are far too expensive to invoke. This naturally leads to the following fundamental question.
\begin{question}\label{qu:main}
	Are there natural conditions which force a graph $G$ to have regular partitions of reasonable size?
\end{question}
For example, one can ask whether bipartite graphs have smaller $\varepsilon$-regular partitions, and the answer is no: it is well-known that Gowers's \cite{MR1445389} construction can be modified to produce a bipartite graph requiring regular partitions of tower-type size.
On the other hand, it is known that if $G$ is \emph{semialgebraic of bounded description complexity}---a natural structural assumption which holds for many ``geometrically-defined'' graphs---then the bound in the regularity lemma can be improved from tower-type to polynomial \cite{MR3585030}.

\subsubsection{VC dimension}
One of the most powerful and well-studied notions of ``low complexity'' for discrete objects is the
Vapnik--Chervonenkis (VC) dimension. This notion was originally introduced by Vapnik and Chervonenkis \cite{MR288823} in the context of statistical learning theory, and over the past 50 years has become a central notion in a wide variety of areas within computer science and mathematics. Since the literature on applications of VC dimension is so vast, in what follows we will briefly mention only a handful of results related to our investigation here.

First, in computer science, VC dimension is one of the most important concepts in learning theory. Indeed, the fundamental theorem of PAC learning \cite{MR1072253} states that a binary class is learnable in Valiant's PAC model \cite{10.1145/1968.1972} if and only if the class has finite VC dimension. This foundational result lies at the heart of much of learning theory; in particular, it is remarkable that learnability---a fundamentally probabilistic and approximate notion---is characterized by a purely combinatorial notion of dimension. This result has led to a number of extensions, such as \cite{MR4537270,MR4003389}, connecting notions of learning with combinatorial measures of dimension.
Moreover, VC dimension has many applications in other areas of computer science; for example, 
in computational geometry, VC dimension is of fundamental importance because of its relation to $\varepsilon$-nets (see e.g.\ \cite[Chapter 10]{MR1899299}), which are in turn useful for many applications including range searching.

VC dimension also has many connections to and applications within mathematics. In model theory, VC dimension is intimately connected to the notion of stable models \cite{MR307903}, and gives an equivalent characterization of so-called NIP theories. In group theory, VC dimension plays an important role in the proof of the Pillay conjectures on groups definable in o-minimal theories \cite{MR2373360} (see also \cite{MR4382479}).
Finally, in combinatorics, which is the focus of the present paper, there are many different results demonstrating that discrete objects with bounded VC dimension are ``structurally simple'', and hence many difficult problems become tractable when restricted to objects of bounded VC dimension, e.g.\ \cite{MR3943496,MR4357431,1007.1670,2502.09576,MR3646875,MR4777869,2408.04165}. Many of these deep combinatorial theorems rely, at their core, on simple properties enjoyed by objects of bounded VC dimension, such as the fundamental Sauer--Shelah lemma \cite{MR307902,MR307903} and its consequences.

The formal definition of VC dimension is as follows.
Let $\F \subseteq 2^V$ be a collection of subsets of some universe $V$. The \emph{VC dimension} of $\F$ is defined to be the largest size of a set $S \subseteq V$ with the property that $S$ is \emph{shattered} by $\F$, meaning that each of the $2^{\ab S}$ possible subsets of $S$ are attained as intersections $S \cap F$ for some $F \in \F$. Given a graph $G$, its VC dimension is then defined as the VC dimension of the set system $\{N(v): v \in V(G)\} \subseteq 2^{V(G)}$, where $N(v)$ denotes the neighborhood of a vertex $v$.

\subsubsection{Question \ref{qu:main} for graphs of bounded VC dimension}
With regards to \cref{qu:main}, graphs of bounded VC dimension are particularly well-behaved with respect to the regularity lemma. Indeed, such graphs have regularity partitions with only a \emph{polynomial} number of parts. Moreover, in this regularity partition, we obtain much stronger information than simply saying that almost all pairs are $\varepsilon$-quasirandom. Indeed, let us say that a pair $(X,Y)$ is \emph{$\varepsilon$-homogeneous} if $d(X,Y) \in [0,\varepsilon] \cup [1-\varepsilon,1]$; it is easy to see that this is a strictly stronger notion than $\varepsilon$-quasirandomness (up to a polynomial change in $\varepsilon$). The \emph{ultra-strong regularity lemma} for graphs of bounded VC dimension then states the following.

\begin{theorem}\label{thm:FPS}
	Let $G$ be a graph with VC dimension $d$, and let $\varepsilon>0$. There is a partition $V(G) = V_1 \cup \dotsb \cup V_K$, where $K = (1/\varepsilon)^{O(d)}$, such that at least a $(1-\varepsilon)$-fraction of the pairs $(x,y) \in V(G)^2$ lie in $\varepsilon$-homogeneous pairs $(V_i,V_j)$.
\end{theorem}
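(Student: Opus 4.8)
The plan is to run the standard machinery for set systems of bounded VC dimension on the neighborhood system $\mathcal{N}=\{N(v):v\in V(G)\}\subseteq 2^{V(G)}$, which has VC dimension $d$ by hypothesis. The relevant classical facts are: the Sauer--Shelah lemma, which bounds the number of distinct traces $\{N(u)\cap T:u\in V(G)\}$ on an $m$-element set $T$ by $\sum_{i=0}^{d}\binom{m}{i}=O(m^{d})$; the $\varepsilon$-net theorem, which says that a uniformly random set of $O(d\varepsilon^{-1}\log(d/\varepsilon))$ vertices is, with high probability, an $\varepsilon$-net for any fixed set system of VC dimension $O(d)$ on $V(G)$; and the companion $\varepsilon$-approximation theorem, with its slightly larger random sample of size $O(d\varepsilon^{-2}\log(d/\varepsilon))$.

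First I would fix an auxiliary parameter $\eta$, a suitable fixed power of $\varepsilon$, and choose $S\subseteq V(G)$ with $|S|=(1/\varepsilon)^{O(1)}$ that is simultaneously an $\eta$-approximation for $\mathcal{N}$ and an $\eta$-net for the difference system $\{N(u)\triangle N(w):u,w\in V(G)\}$, whose VC dimension is $O(d)$; a random $S$ of this size works with high probability. Then partition $V(G)$ into trace classes on $S$: declare $u\equiv u'$ precisely when $N(u)\cap S=N(u')\cap S$. Equivalently, the parts are the nonempty cells of the arrangement of the sets $\{N(v):v\in S\}$, so by Sauer--Shelah the number $K$ of parts is $O(|S|^{d})=(1/\varepsilon)^{O(d)}$, which is the bound on $K$ claimed in the theorem. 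Two structural features come for free: for each $v\in S$ the set $N(v)$ is exactly a union of parts, and by the $\eta$-net property any two vertices in a common part $V_i$ satisfy $|N(u)\triangle N(u')|\leq\eta n$, so vertices within a single part have essentially identical neighborhoods.

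It remains to show that at least a $(1-\varepsilon)$-fraction of the pairs $(x,y)$ lie in $\varepsilon$-homogeneous part-pairs, i.e.\ that $\sum_{(i,j)\ \mathrm{bad}}|V_i||V_j|\leq\varepsilon n^{2}$, where a pair $(V_i,V_j)$ is \emph{bad} if $d(V_i,V_j)\in(\varepsilon,1-\varepsilon)$. This is the crux: the within-part bound from the previous paragraph is consistent with a part-pair having density $\tfrac12$ (e.g.\ if all neighborhoods involved are a common balanced set), so homogeneity --- as opposed to mere near-regularity --- does not follow from it, and must be extracted from bounded VC dimension directly. The mechanism is that $G$ cannot afford many bad part-pairs: a bad pair $(V_i,V_j)$ contains $\Omega(\varepsilon|V_i||V_j|)$ edges and $\Omega(\varepsilon|V_i||V_j|)$ non-edges, hence ``crossing'' configurations $x\sim y$, $x'\not\sim y'$ with $x,x'\in V_i$ and $y,y'\in V_j$; if bad pairs carried more than $\varepsilon n^{2}$ of the total mass, then by choosing vertices at random and using the near-identity of neighborhoods within parts to treat such choices as interchangeable one could locate $d+1$ vertices shattered by $\mathcal{N}$, contradicting that $G$ has VC dimension $d$. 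Pushing this contradiction through, with $\eta$ chosen to be a small enough fixed power of $\varepsilon$, yields the stated bound. The main obstacle is exactly this step --- upgrading ``almost regular'' to ``almost homogeneous'' through a shattering argument --- together with the bookkeeping needed to keep the number of parts polynomial of degree $O(d)$, rather than $O(d^{2})$, in $1/\varepsilon$.
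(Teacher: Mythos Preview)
Your partition construction is sound and is a legitimate alternative to the one the paper sketches. The paper (following Fox--Pach--Suk) builds the partition via Haussler's packing lemma: choose a maximal $\varepsilon$-separated family of neighborhoods, bound its size by $(1/\varepsilon)^{O(d)}$, and assign each vertex to a nearest representative. Your route---sampling $S$, partitioning by traces on $S$, invoking Sauer--Shelah for the part count and the $\eta$-net property on the symmetric-difference system for the within-part bound---is closer in spirit to Lov\'asz--Szegedy and is equally valid. Both approaches reach the same intermediate state: every two vertices in a common part satisfy $|N(u)\triangle N(u')|\leq \eta n$.

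The gap is in the final step. You write that homogeneity ``must be extracted from bounded VC dimension directly'' via a shattering argument, but this is a misconception: once the within-part bound holds on both sides, homogeneity of most pairs follows by pure averaging, with no further use of the VC hypothesis. Concretely, for a pair $(V_i,V_j)$ with density $d = d(V_i,V_j)$, write $p_u = |N(u)\cap V_j|/|V_j|$ for $u\in V_i$; then
\[
d(1-d) = \E_{u\in V_i}[p_u(1-p_u)] + \var_{u\in V_i}(p_u).
\]
The first term is controlled by the $V_j$-side bound via the identity $2p_u(1-p_u) = \E_{w,w'\in V_j}[\mathbf{1}_{u\in N(w)\triangle N(w')}]$; the second by the $V_i$-side bound via $|p_u - p_{u'}| \leq |(N(u)\triangle N(u'))\cap V_j|/|V_j|$. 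Summing over all $i,j$ gives $\sum_{i,j}|V_i||V_j|\,d(V_i,V_j)(1-d(V_i,V_j))\leq \eta n^2$, and since each $\varepsilon$-bad pair contributes at least $(\varepsilon/2)|V_i||V_j|$ to this sum, the bad mass is at most $2\eta n^2/\varepsilon$; taking $\eta=\varepsilon^2/2$ finishes. This is exactly the ``short double-counting argument'' the paper defers to. Your proposed shattering mechanism, by contrast, is not fleshed out and I do not see how to make it work: a large mass of intermediate-density part-pairs does not in any evident way assemble into $d+1$ shattered vertices, and the VC hypothesis has in any case already been spent in bounding $K$.
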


Such a result was independently proved by Alon--Fischer--Newman \cite{MR2341924} and by Lov\'asz--Szegedy \cite{MR2815610}, and was subsequently quantitatively improved by Fox--Pach--Suk \cite{MR3943496}; the result quoted above is due to Fox--Pach--Suk, and is essentially best possible.

In what follows, we shall always view $d$, the VC dimension of $G$, as a fixed constant. Thus, \cref{thm:FPS} states that a graph of bounded VC dimension has a partition into $\poly(1/\varepsilon)$ many parts, such that all but an $\varepsilon$-fraction of the pairs of vertices lie in $\varepsilon$-homogeneous pairs.

We remark here, for future convenience, that there is a more combinatorial formulation of the property that $G$ has bounded VC dimension. Indeed, let $F$ be a bipartite graph with parts $A,B$. We say that $F$ is a \emph{bipartitely induced subgraph} of $G$ if there exist disjoint $A', B' \subseteq V(G)$ such that a pair $(a,b) \in A' \times B'$ is an edge of $G$ if and only if the corresponding pair in $A \times B$ is an edge of $F$. Equivalently, this says that $G$ contains an induced subgraph isomorphic to some supergraph of $F$, obtained by only adding new edges in the parts $A,B$, without adding or removing any edges across the bipartition.

With this definition, it is clear that $G$ has VC dimension less than $d$ if and only if $F_d$ is not a bipartitely induced subgraph of $G$, where $F_d$ is the bipartite graph with parts $A = [d]$ and $B = 2^{[d]}$, where every subset of $A$ is the neighborhood of some vertex in $B$. On the other hand, it is also easy to see that if $G$ has VC dimension greater than $d+\log d$, then for every bipartite graph $F$ with both parts of size $d$, $F$ is a bipartitely induced subgraph of $G$.

Thus, we conclude that $G$ has bounded VC dimension if and only if there exists some fixed bipartite graph $F$ which is not a bipartitely induced subgraph of $G$. This formulation will be particularly instructive and useful when we consider the corresponding concepts in hypergraphs.

Let us finally mention that a variant of \cref{thm:FPS} was obtained by Malliaris and Shelah \cite{MR3145742}. In this
version, the graph $G$ is assumed not to contain bipartitely induced copies of a specific bipartite graph (the so-called \emph{half graph}),
but the conclusion is stronger than the one in \cref{thm:FPS}, since in this case one can guarantee that all pairs in the partition
are $\varepsilon$-quasirandom. Variants of the Malliaris--Shelah theorem were also studied in the setting of Green's arithmetic regularity lemma \cite{MR2153903}, see \cite{MR3943117,MR3919562}.

\subsubsection{Hypergraph regularity and VC\texorpdfstring{$_2$}{\_2} dimension}
After the introduction of Szemer\'edi's regularity lemma, there was great interest in extending this technique to  hypergraphs. There is a natural analogue of the notion of $\varepsilon$-quasirandomness to $k$-uniform hypergraphs (henceforth $k$-graphs), and Chung \cite{MR1099803} proved (via a natural extension of Szemer\'edi's method) that every $k$-graph can be partitioned into a constant number of pieces, such that almost all $k$-tuples of vertices lie in quasirandom tuples; we discuss this notion in detail in \cref{def:weak quasirandomness,thm:chung}. Unfortunately, it was quickly discovered that this notion of hypergraph regularity is not sufficient for many applications (see the discussion before \cref{thm:hypergraph rodl}). More precisely, this notion is too weak to support a \emph{counting lemma}, which is a key ingredient of most applications of the regularity method. Thus, it remained a major research program to develop a notion of hypergraph regularity that is simultaneously strong enough to support a counting lemma and weak enough to support a regularity lemma.

This was accomplished independently in the mid-2000s, most notably by Gowers \cite{MR2373376} and by R\"odl et al.~\cite{MR2167756}. A major insight in the development of hypergraph regularity (going back to the early work of Frankl and R\"odl \cite{MR1884430}) is that, in uniformity $k \geq 3$, it is no longer sufficient to simply partition the vertices. Instead, in uniformity $k$, one must partition the vertices, the pairs of vertices, the triples of vertices, and so on, up through the $(k-1)$-tuples of vertices. This introduces a large number of conceptual, technical, and notational difficulties. As such, we defer most formal definitions and statements for now, and continue a high-level discussion during this introduction.

Our main focus will be on uniformity $3$, so let $\h$ be a $3$-graph. The hypergraph regularity lemma requires us to partition its vertices $V(\h)$, as well as the pairs $V(\h)^2$; these two partitions are called the \emph{vertex partition} and \emph{edge partition}, respectively. Each part of this latter partition can be naturally viewed as a graph, and the quasirandomness notion states very roughly that for most of these graphs $G_i$, the hyperedges of $\h$ form a ``random-like subset'' of the set of triangles in $G_i$. In order for this to be a useful notion, we also need the triangles in $G_i$ to behave in a random-like fashion, hence one needs to impose the assumption that each part $G_i$ of the edge partition is itself a quasirandom graph.

For a graph $G$, the notion of $\varepsilon$-quasirandomness is only meaningful if $\varepsilon$ is much smaller than the edge density of $G$. Hence, in the $3$-graph regularity lemma, we have both a parameter $\varepsilon$---which determines how random-like the hyperedges of $\h$ are with respect to the triangles in $G_i$---and a function $\psi:(0,1) \to (0,1)$, which determines how quasirandom the graphs $G_i$ are, as a function of their edge densities. In principle $\psi$ can be any function, but in most applications, including the counting and removal lemmas, and thus in the proofs of important consequences such as the multidimensional Szemer\'edi theorem \cite{MR2373376}, $\psi$ can be taken to be a polynomial.

The proof of the $3$-graph regularity lemma proceeds in an iterative fashion, where we refine the two partitions at every step. During a single iteration, the quasirandomness of the edge parts $G_i$ may be destroyed, hence at every step of the iteration, one must apply Szemer\'edi's regularity lemma to further refine the partition and recover the quasirandomness. Because of this, the bounds in the hypergraph regularity lemma deteriorate very quickly: even in the case of uniformity $3$, one must iteratively invoke tower-type losses, leading to a final bound of wowzer\footnote{We recall that the \emph{wowzer function} is defined by $\wow(0)=1$ and $\wow(x+1) = \twr(\wow(x))$ for $x\geq 0$.} type. Note that this happens ``almost'' regardless of what the function $\psi$ is; as long as $1/\psi$ grows slower than the tower function, the contribution of $\psi$ is completely overwhelmed by the other tower-type losses at each step. However, if $1/\psi$ grows extremely quickly, the $3$-graph regularity lemma has even worse bounds. For example, if $1/\psi$ grows like the wowzer function, then the hypergraph regularity lemma would yield bounds of $A_3$ type, where $A_3$ (the iterated wowzer function) is the next level of the Ackermann hierarchy. For more details, see e.g.\ \cite[Appendix A.1]{2404.02024}.

Recently, Moshkovitz and Shapira \cite{MR4025519} proved that such terrible bounds are in fact necessary. Concretely, in the case of uniformity $3$, they proved the existence of a $3$-graph $\h$ which has no $\varepsilon$-regular partition (for $\psi$ a polynomial) using fewer than $\wow(\log(1/\varepsilon))$ vertex parts.

Given the prior discussion, it is unsurprising that certain structural assumptions on the hypergraph $\h$ would imply better bounds. For example, it is known that, just as in the graph case, the hypergraph regularity lemma has merely polynomial bounds if $\h$ is semialgebraic of bounded description complexity. As proved by Chernikov--Starchenko \cite{MR4350155} and by Fox--Pach--Suk \cite{MR3943496}, the same conclusion also holds if $\h$ is assumed to have bounded VC dimension, for a strong notion of VC dimension that arises naturally from geometric considerations. For a different notion of VC dimension, called \emph{slicewise VC dimension}, Terry \cite{2404.01274} proved that one can obtain a regularity partition with only a double-exponential number of parts, and the authors \cite{2506.15582} recently improved her bound to single-exponential, which Terry \cite[Theorem 6.8]{2404.01293} showed to be best possible.

However, from a purely combinatorial perspective, arguably the most natural extension of VC dimension from graphs to $3$-graphs proceeds by generalizing the characterization in terms of forbidden bipartitely induced subgraphs. That is, we say that a $3$-graph $\h$ has \emph{bounded VC$_2$ dimension} if there exists a fixed tripartite $3$-graph $\V$ which does not appear as a tripartitely induced subgraph\footnote{Extending the graph terminology, we say that $\V$ is a tripartitely induced subgraph of $\h$ if there exist disjoint $A,B,C \subseteq V(\h)$, corresponding to the three vertex parts of $\V$, such that each triple $(a,b,c) \in A \times B \times C$ is an edge of $\h$ if and only if the corresponding triple is an edge of $\V$. That is, we only care about edges which go between all three parts, and we don't care if $\h$ has additional edges entirely within one part, or only between two parts.} of $\h$. This definition has its origins in model theory, where it was introduced by Shelah \cite{MR3273451}, and where it was quickly understood to be the ``correct'' higher-uniformity analogue of VC dimension (see e.g.\ \cite{MR3666349,MR3952231,MR3509704,MR4258515,MR3787371,MR2576797}), in the sense that it is a ``low-complexity'' notion which picks up on the $3$-uniform nature of the setting.

As in the graph case, this definition is equivalent to a definition in terms of shattering.
\begin{definition}
	Let $\h$ be a hypergraph and $d \geq 1$ an integer. A \emph{shattered $K_{d,d}$} in $\h$ is a pair of disjoint sets $A,B \subseteq V(\h)$ with $\ab A = \ab B = d$ with the property that, for every $S \subseteq A \times B$, there is a vertex $v \in V(\h)$ such that $(a,b,v) \in E(\h)$ if and only if $(a,b) \in S$.
	The \emph{VC$_2$ dimension} of $\h$ is then defined to be the maximum $d$ such that $\h$ contains a shattered $K_{d,d}$.
\end{definition}
Clearly, $\h$ contains a shattered $K_{d,d}$ if and only if $\h$ contains $\V_d$ as a tripartitely induced subgraph, where $\V_d$ is the tripartite $3$-graph with vertex sets $A,B,C$, where $A = B = [d]$ and $C = 2^{[d]^2}$, and where a triple $(a,b,S)$ is an edge if and only if $(a,b) \in S$. In the other direction, it is easy to see that if $\h$ has VC$_2$ dimension at least $d + \log d$, then it contains every tripartite $3$-graph with parts of size $d$ as a tripartitely induced subgraph. In this paper, we will mostly use the formulation in terms of forbidden tripartitely induced subgraphs, as this is a natural formulation from a graph-theoretic perspective, and since it is the most convenient when working with the regularity lemma.

\subsection{Hypergraph regularity under bounded VC\texorpdfstring{$_2$}{\_2} dimension}
Following the discussion above, it is natural to ask about the quantitative aspects of the $3$-graph regularity lemma if one imposes the restriction that the $3$-graph $\h$ has bounded VC$_2$ dimension; this question was first raised by Chernikov and Towsner \cite{2010.00726}, and asked explicitly by Terry \cite{MR4662634}. In particular, one could expect that the assumption of bounded VC$_2$ dimension would allow one to improve on the wowzer-type bounds, which are known to be necessary in general.

The first progress in this direction was made by Terry \cite{MR4662634}, who showed that if $\h$ has bounded VC$_2$ dimension, then the size of the {\em edge partition}\footnote{Here, and throughout the introduction, we mean the number of edge parts per pair of vertex parts when discussing the size of an edge partition.} can be taken to be only polynomial in $1/\varepsilon$. However, her proof uses the hypergraph regularity lemma as a black box, hence only provides wowzer-type upper bounds on the number of vertex parts. More recently, Terry \cite[Theorem 1.9]{2404.02024} made further progress on this question, proving that wowzer-type bounds are unavoidable in certain extreme cases. More precisely, she showed that for \emph{some} extremely fast-decreasing function $\psi_0:(0,1) \to (0,1)$, there exists a $3$-graph $\h$ with VC$_2$ dimension equal to $1$, and with the property that every $(\varepsilon,\psi_0)$-regular partition of $\h$ uses at least $\wow(\poly(1/\varepsilon))$ parts.

However, this is not the end of the story. The function $\psi_0$ used by Terry decays extremely quickly, namely $\psi_0(x) \leq 1/\wow(\poly(1/x))$. For such rapidly decaying functions, the \emph{upper} bound in the hypergraph regularity lemma is no longer of wowzer type: as discussed above, for this choice of $\psi_0$, the hypergraph regularity lemma would only deliver upper bounds of $A_3$ type. Hence, while Terry's result shows that wowzer-type bounds may be necessary even if $\h$ has bounded VC$_2$ dimension, such a lower bound no longer matches the upper bound.

Additionally, as discussed previously, most of the important applications of the hypergraph regularity lemma do not use such rapidly decaying $\psi$. Instead, applications usually only use $\psi$ which are polynomial functions, and it is in such cases that the hypergraph regularity lemma yields wowzer-type upper bounds. As such, it is natural to ask (as was done explicitly in \cite{2404.02024}) whether the wowzer-type bound can be improved if $\h$ has bounded VC$_2$ dimension, and if one restricts $\psi$ to be a ``sensible'' function. Our main result in this paper shows that this is indeed the case.

\begin{theorem}[Informal statement; see \cref{thm:main UB formal}]\label{thm:main UB informal}
Let $\psi:(0,1) \to (0,1)$ be a polynomial function and let $\varepsilon>0$. If $\h$ is a $3$-graph with bounded VC$_2$ dimension, then $\h$ has an $(\varepsilon,\psi)$-regular partition with at most $\twr(\twr(\poly(1/\varepsilon)))$ vertex parts.
\end{theorem}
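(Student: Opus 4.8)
The plan is to run the hypergraph regularity method, but to replace its two sources of tower-type blow-up---the inner call to graph regularity and the number of outer iterations---by arguments that exploit bounded VC$_2$ dimension together with a cylinder regularity lemma. Throughout, write $d$ for the VC$_2$ dimension of $\h$, treated as a constant, and fix a polynomial $\psi$. First I would isolate the object being built: a vertex partition $\mathcal{P}$ of $V(\h)$ together with, for each pair of parts, a partition of the bipartite cylinder between them into $\psi$-quasirandom bipartite graphs, such that all but an $\varepsilon$-fraction of triads $(G_{ij},G_{jk},G_{ik})$ are $\varepsilon$-quasirandom with respect to $\h$, i.e.\ the edges of $\h$ form a quasirandom subset of the triangles of the triad. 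The classical construction builds this by iterating a refinement step $\poly(1/\varepsilon)$ times, invoking Szemer\'edi's lemma within each step to restore $\psi$-quasirandomness of the bipartite graphs; the first gives a wowzer and the second a tower, and the two compound. My two modifications are: (1) within a step, use a \emph{hypergraph cylinder regularity lemma}---refine all bipartite cylinders simultaneously and once, so that most triads become regular \emph{and} all bipartite pieces remain $\psi$-quasirandom---at a cost of a single tower in the current partition size, rather than a tower per triad followed by an iteration; and (2) show that under bounded VC$_2$ dimension the outer iteration terminates after a bounded number of steps.

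The heart of the argument is modification (2), and this is where the ``novel method'' for graphs gets ported. The point is that, for a $3$-graph of VC$_2$ dimension $d$, a triad that is $\varepsilon$-quasirandom with respect to $\h$ and whose triangle-density of $\h$ is bounded away from $0$ and $1$ must contain a shattered $K_{m,m}$ for some $m$ exceeding $d$: one produces it greedily inside the triad, using quasirandomness of the bipartite graphs to pick a good $m\times m$ grid $(A,B)$ and then using quasirandomness of $\h$ relative to triangles to guarantee that each of the $2^{m^2}$ adjacency patterns on that grid is realised by some vertex $v$ (where $v$'s pattern records the set of $(a,b)$ with $(a,b,v)\in E(\h)$). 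Hence, once a triad is regular, it is automatically \emph{homogeneous}---its $\h$-triangle-density lies in $[0,\varepsilon]\cup[1-\varepsilon,1]$---which is a rigid condition. Consequently each refinement step may be taken to split parts according to the $\h$-neighborhoods $\{v:(a,b,v)\in E(\h)\}$ of a bounded set of ``witness'' pairs $(a,b)$, and a Sauer--Shelah-type bound on the relevant dual shatter function keeps the partition polynomially sized in the number of witnesses; a fresh non-homogeneous triad discovered at step $m$ then forces the $m$-th witness to genuinely refine some part, so after more than $d$ such steps one again exhibits a shattered $K_{d+1,d+1}$, contradicting the dimension bound. This mirrors exactly the graph case, where the same scheme---one cylinder-regularity refinement per round, Sauer--Shelah to keep it polynomial, a bounded number of rounds---re-proves the polynomial bound of \cref{thm:FPS}.

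Putting the pieces together, a bounded number of rounds, each sending the partition size $N\mapsto\twr(\poly(N))$ and involving only polynomially many witness-splits, yields a final vertex partition of size $\twr^{(O(1))}(\poly(1/\varepsilon))$, which is a bounded composition of towers and hence of the form $\twr(\twr(\poly(1/\varepsilon)))$ (with the dependence on $d$ absorbed into the polynomial). I expect two steps to be the real obstacles. The first is proving the hypergraph cylinder regularity lemma with only tower-type bounds: one must refine the vertex partition and all bipartite cylinders in a single pass so that almost all triads are regular while \emph{simultaneously} preserving $\psi$-quasirandomness of every bipartite piece, and the cylinder structure---demanding that all $\binom{3}{2}$ bipartite graphs of a triad be controlled at once, rather than pair by pair---is precisely what stops the exceptional triads over different cylinders from accumulating and forcing an iteration (hence a wowzer). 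The second is the shattering/termination argument: converting ``the iteration has not stopped after $d$ rounds'' into an honest shattered $K_{d+1,d+1}$ requires the refinements to be rigidly of the neighborhood-trace form, careful tracking that the defect fixed in each round is genuinely new, and a Sauer--Shelah bound adapted to pairs rather than to single vertices. Once these are in place, the remaining work---transferring between the various notions of quasirandomness, handling exceptional triads, and the counting needed to build the shattered configuration---should be routine.
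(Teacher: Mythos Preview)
Your two building blocks are right: a hypergraph cylinder regularity lemma with tower-type bounds, and the observation that under bounded VC$_2$ dimension any sufficiently quasirandom triad must be homogeneous (relative density in $[0,\varepsilon]\cup[1-\varepsilon,1]$) via an induced counting argument. The paper uses exactly these. But your termination scheme in modification~(2) is a genuine gap. You propose to refine along $\h$-neighborhood traces of witness pairs $(a,b)$ and to bound the number of rounds by a Sauer--Shelah/Haussler-type argument, producing a shattered $K_{d+1,d+1}$ if the process runs too long. This is precisely the approach the paper rules out in \cref{subsec:hausslerfree}: there is \emph{no} packing/Sauer--Shelah analogue for VC$_2$ dimension. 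Concretely, the construction in \eqref{eq:complete bipartite links} gives a tripartite $3$-graph of VC$_2$ dimension~$1$ in which the links $N(c)\subseteq A\times B$ are $n$ pairwise $\Omega(n^2)$-far sets; hence neighborhood-trace refinements need not stay polynomial, and ``$d$ rounds forces a shattered $K_{d+1,d+1}$'' fails outright.

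The paper avoids any outer iteration altogether. It applies the cylinder regularity lemma \emph{once}, yielding a tower-sized cylinder chain partition in which almost all triads are homogeneous by the induced counting lemma. The missing idea is then purely combinatorial: homogeneity, unlike quasirandomness, survives \emph{arbitrary} refinement by a one-line Markov argument (if $d(\h\mid G)<\gamma$ then a random triangle of $G$ lands in a sub-triad of density $\geq\sqrt\gamma$ with probability $<\sqrt\gamma$). So one simply takes the Venn diagram of the cylinder partition to get an honest chain partition (exponentiating the number of parts, which is negligible against a tower), and then applies Szemer\'edi's lemma \emph{once} to restore $\psi$-quasirandomness of the bipartite pieces; homogeneity persists through both steps. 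This single final application of graph regularity is the sole source of the second tower, giving $\twr(\twr(\poly(1/\varepsilon)))$. There is no round-counting, no Sauer--Shelah, and no attempt to control how many ``witnesses'' one needs.
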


A few remarks about \cref{thm:main UB informal} are in order. First, we note that the double-tower-type bound in \cref{thm:main UB informal} is much better than the wowzer-type bound one has in general, for the same reason that $2^{2^x} \ll \twr(x)$. Just as the double-exponential function $2^{2^x}$ is of ``exponential type'' rather than of tower type, so too the bound in \cref{thm:main UB informal} is of tower type, rather than wowzer type. Second, we remark that the assumption that $\psi$ is a polynomial function is not really important; the same result holds so long as $1/\psi$ grows much slower than a tower function, e.g.\ $\psi(x) = 1/2^{2^{2^{2^x}}}$ also works. Essentially, if $\psi$ is such that the usual hypergraph regularity lemma yields wowzer-type bounds, our \cref{thm:main UB informal} yields a double-tower bounds under the added assumption of bounded VC$_2$ dimension. Third, we remark that our proof of \cref{thm:main UB informal} 
also gives a partition where the number of edge parts is tower type, but this is substantially less interesting: as discussed above, the result of Terry \cite[Theorem 3.1]{MR4662634} shows that we may take the number of edge parts to be polynomial, and moreover this is best possible \cite{2404.02030}. 
For many more results and questions on the sizes of the edge partition, see \cite{2404.02030}, as well as \cref{conj:graph partition} in the concluding remarks. Fourth, we recall that Terry \cite{2404.02024} proved a wowzer-type lower bound if $\psi$ is an arbitrary function, and \cref{thm:main UB informal} gives a tower-type upper bound for ``reasonable'' $\psi$; it would be interesting to determine which exact growth rate for $\psi$ determines the transition from tower-type bounds to wowzer-type bounds for regularity partitions of hypergraphs with bounded VC$_2$ dimension.

Fifth, we remark that the structure given by \cref{thm:main UB informal} is stronger than that of a usual regularity partition, in the same way that \cref{thm:FPS} is both qualitatively and quantitatively stronger than Szemer\'edi's regularity lemma. Namely, not only do we ensure that the hyperedges in $\h$ form a quasirandom subset of the set of triangles of most graphs $G_i$ in the edge partition, but rather, we show that the hyperedges comprise either almost all or almost none of these triangles. This is a stronger statement, in the same way that $\varepsilon$-homogeneity of a pair of vertex sets is a stronger statement than $\varepsilon$-quasirandomness. It was already proved by Chernikov--Towsner \cite{2010.00726} and Terry--Wolf \cite{2111.01737} that $3$-graphs of bounded VC$_2$ dimension admit such a decomposition, but their techniques do not give the quantitative improvement on the number of vertex parts over the usual hypergraph regularity lemma\footnote{In fact, the work of Chernikov--Towsner \cite{2010.00726} uses infinitary techniques that give no bound at all; the work of Terry--Wolf \cite{2111.01737} is quantitative, but uses the hypergraph regularity lemma as a black box and hence does not improve over its quantitative dependencies.}.

Finally, we recall that $\h$ having bounded VC$_2$ dimension is equivalent to $\h$ forbidding some fixed tripartite $3$-graph $\V$ as a tripartitely induced subgraph. In fact, the proof of \cref{thm:main UB informal} does not require that there be \emph{zero} tripartitely induced copies of $\V$; instead, it suffices for there to be \emph{few} such copies, i.e.\ $o(\ab \h^{\ab \V})$ such copies. This is quite commonplace in the study of regularity and VC dimension. For example, the regularity lemma for graphs of bounded VC dimension of \cite{MR2341924} is explicitly stated under the assumption of few bipartitely induced copies, and moreover, it is proved in \cite{2111.01737} that a $3$-graph $\h$ admits a homogeneous partition, in the sense above, if and only if $\h$ has few tripartitely induced copies of a fixed tripartite $\V$. That is, the ``correct'' notion of bounded VC$_2$ dimension is, in some sense, having few copies rather than having zero copies. However, we opted not to state \cref{thm:main UB informal} in this way in order to make the statement cleaner.

Given that \cref{thm:main UB informal} improves the upper bound from a wowzer function to a tower-type function, it is natural to ask whether it can be improved even further. As remarked above, for two stronger notions of bounded VC dimension for hypergraphs, Chernikov--Starchenko \cite{MR4350155}, Fox--Pach--Suk \cite{MR3943496}, Terry \cite{2404.01274}, and the authors \cite{2506.15582} proved that this is indeed the case, and that one can obtain, respectively, polynomial and single-exponential bounds. 
However, in the case of
VC$_2$ dimension, this is impossible: tower-type bounds are the truth, as proved by Terry \cite{2404.01293}.

\begin{proposition}[Terry \cite{2404.01293}]\label{thm:main LB informal}
	For every $\varepsilon>0$ and every $\psi:(0,1) \to (0,1)$ with $\psi(x) \leq x^C$ for some absolute constant $C$, there exist infinitely many $3$-graphs $\h$ with the following properties. $\h$ has VC$_2$ dimension at most $1$, and every $(\varepsilon,\psi)$-regular partition of $\h$ has at least $\twr(\poly(1/\varepsilon))$ vertex parts.
\end{proposition}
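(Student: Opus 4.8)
The plan is to deduce the proposition from Gowers's tower-type lower bound for \emph{graph} regularity \cite{MR1445389}, by encoding a hard graph inside a $3$-graph of bounded VC$_2$ dimension. A natural encoding to try is this: fix a bipartite graph $G$ with parts $A_0,B_0$ of size $n$ such that every $\delta$-regular partition of $G$ has at least $\twr(\poly(1/\delta))$ parts (taking $n$ large relative to $1/\delta$); such graphs exist and, by \cref{thm:FPS}, necessarily have unbounded VC dimension. Let $\h=\h_G$ be the $3$-graph on $A\sqcup B\sqcup C$ with $|A|=|B|=|C|=n$ and $A,B$ identified with $A_0,B_0$, in which a transversal triple $\{a,b,c\}$ (with $a\in A$, $b\in B$, $c\in C$) is an edge precisely when $(a,b)\in E(G)$, no other triple being an edge; letting $n$ grow produces infinitely many such $3$-graphs.

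One first checks that $\h$ has VC$_2$ dimension at most $1$, which is immediate: since the edge relation of $\h$ reads off only the $A\times B$-coordinate, the link graph of every vertex is---depending only on which part contains it---either a copy of $G$ (in either orientation), a union of complete rows and columns indexed by a single $G$-neighbourhood, or empty. Checking the definition of a shattered $K_{2,2}$ for each of the three choices of the ``shattering'' coordinate, and discarding by transversality the grids that straddle the parts, one sees that the links, restricted to any $2\times 2$ grid, realize at most four of the sixteen possible subsets; in particular there is no shattered $K_{2,2}$.

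The heart of the argument is to show that an $(\varepsilon,\psi)$-regular partition $(\p,\Q)$ of $\h$ forces $\p$ to be large. Refining $\p$ (at the cost of a factor of $3$) we may assume it refines $\{A,B,C\}$, with parts $V_1,\dots,V_a$ inside $A$ and $V_{a+1},\dots,V_{a+b}$ inside $B$. The key local fact is a characterization of $\h$-regular triads: because an $\h$-edge is determined by its $A\times B$-projection, a triad with vertex parts $V_i\subseteq A$, $V_j\subseteq B$, $V_k\subseteq C$ and edge parts $G^{ij}\subseteq V_i\times V_j$, $G^{jk}$, $G^{ik}$ can be $\varepsilon$-regular with respect to $\h$ only if $G$ is $O(\varepsilon)$-homogeneous inside $G^{ij}$---that is, its relative density there is within $O(\varepsilon)$ of $0$ or $1$---since otherwise replacing $G^{ij}$ by $G^{ij}\cap E(G)$ (respectively $G^{ij}\setminus E(G)$) produces a sub-triad with an $\h$-density deviation of order $1$. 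Summing over the edge parts of a fixed pair $(V_i,V_j)$, this forces $E(G)\cap(V_i\times V_j)$ to agree, up to an $O(\varepsilon)$-fraction of $V_i\times V_j$, with a union of edge parts, each quasirandom between $V_i$ and $V_j$; hence $G$ is $\delta$-quasirandom between $V_i$ and $V_j$, where $\delta$ is $O(\varepsilon)$ plus a term controlled by the quasirandomness demanded of the edge parts. So $\p$, restricted to $A\cup B$, is a $\delta$-regular partition of $G$, and Gowers's bound yields $|\p|\geq a+b\geq\twr(\poly(1/\delta))$.

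The step I expect to be the main obstacle is ensuring $\delta=\poly(\varepsilon)$ uniformly, so that the bound reads $\twr(\poly(1/\varepsilon))$. When the edge partition has at least polynomially (in $1/\varepsilon$) many parts, the hypothesis $\psi(x)\leq x^C$ makes the extra term negligible and this is immediate. The difficulty is the complementary regime, in which the edge partition stays small and could a priori ``absorb'' the irregularity of $G$; ruling this out requires knowing that Gowers's construction remains irregular at fine scales even after one is allowed to refine each pair of vertex parts into a bounded number of quasirandom pieces---equivalently, that $G$ is not a union of boundedly many quasirandom graphs over large vertex sets---which is where one invokes the robust, recursively structured variants of Gowers's lower bound. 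Should the naive encoding above prove insufficiently robust, one would instead take a $3$-graph built by iterating a single amplification step $\Theta(\log(1/\varepsilon))$ times; the point is that the bounded-VC$_2$ condition keeps the edge partition polynomial (as in Terry's theorem), so that each step costs only an exponential---rather than a tower-type---blow-up, and all of the tower-type complexity is forced into the vertex partition. The remainder is routine bookkeeping among $\varepsilon$, the regularity parameter imposed on $G$, $\psi$, and the number of edge parts per pair.
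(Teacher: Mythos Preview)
Your construction and the VC$_2$ bound coincide exactly with the paper's: the $3$-graph $\h_G$ you define is precisely the $n$-fold cone $n\otimes G$ of \cref{def:n-fold link}, and your argument that it contains no shattered $K_{2,2}$ is the content of \cref{lem:link bounded VC2}. The divergence is entirely in how you extract a lower bound on $|\Q_V|$ from an $(\varepsilon,\psi)$-regular partition.

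Your route is direct: from the regularity of each triad you try to read off information about $G$ on the pair $(V_i,V_j)$, going through the edge partition. Two remarks. First, your homogeneity claim---that $G$ must have relative density within $O(\varepsilon)$ of $0$ or $1$ inside each regular edge part $G^{ij}$---is not literally what Gowers's octahedron definition (\cref{def:quasirandom chain}) says, since the sub-triad density test you invoke is the DISC formulation; one must insert the standard Gowers--Cauchy--Schwarz step showing octahedron-quasirandomness implies DISC, with a polynomial loss. This is routine. Second, and more seriously, you yourself identify the real gap: controlling the error term coming from the quasirandomness of the edge parts, uniformly in the (unknown) number $L$ of edge parts per pair. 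You note this is easy when $L$ is polynomially large in $1/\varepsilon$, but your treatment of small $L$ is speculative (``one would instead take\dots''), and the alternative iterative construction you gesture at is not specified. As written, the proposal does not close this gap.

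The paper sidesteps this entirely by passing through \emph{weak} quasirandomness. One first invokes the quantitative implication (\cref{prop:strong implies weak}) that any $(\varepsilon,\psi)$-quasirandom chain partition with $\psi(x)\le x^C$ has a weakly $\varepsilon^{1/C}$-quasirandom vertex partition $\Q_V$. Weak quasirandomness involves only vertex parts, so the edge partition---and with it your troublesome parameter $L$---disappears. Then, for $\h=n\otimes G$, weak quasirandomness of a triple $(V_i,V_j,V_k)$ with $V_i\subseteq A$, $V_j\subseteq B$, $V_k\subseteq C$ immediately gives graph-quasirandomness of $(V_i,V_j)$ in $G$: since $e_\h(V_i',V_j',V_k')=e_G(V_i',V_j')\cdot|V_k'|$ for all subsets, the weak $3$-graph condition on $\h$ is exactly the graph condition on $G$ after dividing out $|V_k|$. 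Hence any weakly $\eta$-quasirandom vertex partition of $\h$ restricts (after refining by $\{A,B,C\}$, a factor of $3$) to an $O(\eta)$-quasirandom partition of $G$, and Gowers's tower lower bound finishes. This is precisely \cref{prop:terry LB}, and it closes exactly the gap you flagged without any case analysis on the edge partition.
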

In fact, Terry proved something somewhat stronger, namely that every \emph{weak} $\varepsilon$-regular partition of $\h$ has at least $\twr(\poly(1/\varepsilon))$ parts (see \cref{prop:terry LB}). We discuss this in more detail below, but briefly, a weak $\varepsilon$-regular partition is the structure given by Chung's \cite{MR1099803} early work on hypergraph regularity. As its name suggests, this is a strictly weaker structure than that given by the full hypergraph regularity lemma, hence a lower bound on the number of parts in such a partition immediately implies the same lower bound on the number of parts in a full-fledged hypergraph regularity partition. Although \cref{thm:main LB informal} follows easily from results of Terry, it has apparently not been explicitly stated in this form before, so we include the details of the derivation in \cref{sec:LB proof}.

By contrast, the proof of \cref{thm:main UB informal} requires substantial new ideas, and so we give a high-level outline of it in the next subsection.

\subsection{Outline of the proof of Theorem \ref{thm:main UB informal}}\label{sec:proof sketch}
\subsubsection{Difficulties with extending the proof of Theorem \ref{thm:FPS}}

Given that \cref{thm:main UB informal} is a uniformity-$3$ extension of \cref{thm:FPS}, it is natural to attempt to mimic the proof of \cref{thm:FPS}. Unfortunately, this appears to be impossible, because of fundamental differences between the notions of VC dimension and VC$_2$ dimension. In what follows we elaborate on this issue.

We first briefly discuss the proof of \cref{thm:FPS}, as well as why it does not work in uniformity $3$. Although this is not directly relevant to what follows---after all, this is an approach that fails---we believe it is still instructive, as it demonstrates some of the key differences between graphs and $3$-graphs that make proving results like \cref{thm:main UB informal} so much harder than results like \cref{thm:FPS}. The impatient reader can safely skip to \cref{sec:our approach sketch}, where we explain the approach we actually use in the proof of \cref{thm:main UB informal}.

As mentioned above, results like \cref{thm:FPS} were proved by Lov\'asz--Szegedy \cite{MR2815610}, Alon--Fischer--Newman \cite{MR2341924}, and Fox--Pach--Suk \cite{MR3943496}. These three proofs have certain differences, but a key feature shared by all three is the use of the fact that set systems with bounded VC dimension cannot contain many sets with large pairwise symmetric difference. The strongest version of this fact is
Haussler's packing lemma \cite{MR1313896}, which is one of the central tools in the theory of VC dimension.
\begin{lemma}[Haussler \cite{MR1313896}]\label{lem:haussler}
	Let $\F \subseteq 2^{[n]}$ be a set system of VC dimension $d$. If $\ab{S \symmd T} \geq \varepsilon n$ for all distinct $S,T \in \F$, then $\ab \F =O_d(\varepsilon^{-d})$.
\end{lemma}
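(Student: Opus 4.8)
The plan is to follow the standard proof of Haussler's packing lemma, which combines two ingredients: a bound on the \emph{one-inclusion graph} of a set system of bounded VC dimension, and a probabilistic argument that passes to a random subsample of the ground set.

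The first ingredient is the following statement: if $\mathcal{G} \subseteq 2^{[m]}$ has VC dimension at most $d$, then the graph on vertex set $\mathcal{G}$ whose edges are the pairs $S, T$ with $\ab{S \symmd T} = 1$ has at most $d\ab{\mathcal{G}}$ edges. I would prove this by downward \emph{shifting}: for a coordinate $i$, replace every $S \in \mathcal{G}$ containing $i$ by $S \setminus \{i\}$, unless $S \setminus \{i\}$ already lies in the family. A standard check shows that this operation never increases the VC dimension and never increases the number of one-inclusion edges. After shifting in all $m$ coordinates the family is downward closed, so---having VC dimension at most $d$---every member has size at most $d$; orienting each one-inclusion edge from the larger to the smaller of its two sets then produces an orientation of out-degree at most $d$, which gives the edge bound.

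For the lemma itself, fix a parameter $m$, let $B$ be a uniformly random $j$-element subset of $[n]$, and set $\phi(j) \coloneqq \E\bab{\{S \cap B : S \in \F\}}$, the expected number of distinct traces of $\F$ on $B$. Then $\phi(0) = 1$, $\phi(n) = \ab\F$, $\phi$ is nondecreasing, and the Sauer--Shelah lemma gives $\phi(j) \leq \sum_{i \leq d}\binom ji = O_d(j^d)$. The crucial point is that deleting one point from a $j$-set collapses exactly one pair of traces for each one-inclusion edge of the trace family in the direction of the deleted point; averaging over the deleted point and invoking the edge bound yields $\phi(j) - \phi(j-1) \leq \tfrac dj \phi(j)$, i.e.\ $\phi(j) \leq \tfrac{j}{j-d}\phi(j-1)$ for $j > d$, which iterates to $\phi(m) \leq 2^d\binom md = O_d(m^d)$. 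On the other hand, the packing hypothesis keeps the traces from collapsing on a sample of modest size: two sets with $\ab{S \symmd T} \geq \varepsilon n$ agree on $B$ only if $B$ avoids $S \symmd T$, which has probability at most $(1-\varepsilon)^m$. Combining this with the recursion one shows that for a suitable $m = \Theta_d(1/\varepsilon)$ one already has $\phi(m) = \Omega(\ab\F)$, whence $\ab\F = O(\phi(m)) = O_d(\varepsilon^{-d})$.

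The delicate point---and the one I expect to be the main obstacle---is this last step. A naive union bound over the $\binom{\ab\F}{2}$ pairs forces $m \gtrsim \varepsilon^{-1}\log\ab\F$ and delivers only the weaker, Dudley-type bound $\ab\F = O_d\bigl(\varepsilon^{-d}(\log(1/\varepsilon))^d\bigr)$. Removing the logarithmic factor is precisely Haussler's contribution: one uses the one-inclusion recursion not merely to bound how fast $\phi$ grows, but also to certify that $\phi$ has essentially reached $\ab\F$ by the time the sample has size $\Theta_d(\varepsilon^{-1})$---for instance by tracking, across a random one-point extension of the sample, how the number of distinct traces grows, and bounding this growth both from below (via the packing condition: a non-singleton trace class, whose members pairwise differ in at least $\varepsilon n$ coordinates, is split by a fresh random point with probability at least $\varepsilon$) and from above (via the one-inclusion edge bound). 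Carrying this bookkeeping out carefully is the heart of the argument; where only a polynomial-in-$1/\varepsilon$ bound is needed, the cruder version already suffices.
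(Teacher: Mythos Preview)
The paper does not prove this lemma; it is quoted from Haussler's paper and used as a black box. Indeed, a central point of the paper is that Haussler's packing lemma has \emph{no} analogue for VC$_2$ dimension, which is why the authors are forced to develop an entirely different route to their main theorem.

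Your outline is the standard one, and the first two ingredients---the shifting proof of the one-inclusion edge bound and the recursion $\phi(j)\le\frac{j}{j-d}\,\phi(j-1)$---are stated correctly. You also correctly identify that the naive union bound over pairs only yields the Dudley-type estimate $\ab\F = O_d\bigl(\varepsilon^{-d}\log^d(1/\varepsilon)\bigr)$, and that removing the logarithm is the crux.

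The specific mechanism you propose for that last step does not close as written. Your lower bound on the growth is
\[
\phi(m+1)-\phi(m)\ \ge\ \varepsilon\cdot \E\bigl[\#\text{ non-singleton trace classes on the $m$-sample}\bigr],
\]
but the number of non-singleton classes can be much smaller than $\ab\F-\phi(m)$: a single enormous class contributes $1$ to the former and its size minus $1$ to the latter. Combining this with the one-inclusion upper bound therefore controls only the number of non-singleton classes, not $\ab\F-\phi(m)$, and one cannot conclude from this alone that $\phi(m)$ has reached a constant fraction of $\ab\F$ by $m=\Theta_d(1/\varepsilon)$. Haussler's actual bookkeeping is subtler---one must track a quantity that weights each collided \emph{set} rather than each collided class---and you are right that carrying this out carefully is the heart of the argument. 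Your closing remark is also right: since the paper only ever needs a polynomial bound in $1/\varepsilon$, and in fact abandons Haussler's lemma entirely in the hypergraph setting, the log-lossy version you do give in full already suffices for every use the paper makes of the lemma.
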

Using this lemma, Fox, Pach, and Suk proved \cref{thm:FPS} as follows. Let $G$ be a graph with bounded VC dimension, and let us suppose for simplicity that $G$ is bipartite\footnote{This proof actually works equally well whether or not $G$ is bipartite. However, it will be more convenient to work with bipartite graphs in what follows.} with parts $A,B$, where $\ab A = \ab B = n$. Pick out a maximal collection $a_1,\dots,a_k \in A$ with the property that $\ab{N(a_i) \symmd N(a_j)} \geq \varepsilon n$. By \cref{lem:haussler}, we have that $k \leq \poly(1/\varepsilon)$, since the set system $\{N(a_i)\}_{1\leq i \leq k} \subseteq 2^B$ has bounded VC dimension. Additionally, by maximality, we find that for every $v \in A$, we have that $\ab{N(v) \symmd N(a_i)} < \varepsilon n$ for some $i \in [k]$. Therefore, we may partition $A$ into $A_1,\dots,A_k$, where $A_i$ comprises only vertices $v \in A$ with $\ab{N(v) \symmd N(a_i)} < \varepsilon n$. The triangle inequality then implies that $\ab{N(v) \symmd N(v')} <2\varepsilon n$ for all $v,v' \in A_i$. By interchanging the roles of $A$ and $B$, we can also partition $B$ into $B_1,\dots,B_\ell$, where $\ell \leq \poly(1/\varepsilon)$, such that $\ab{N(u) \symmd N(u')} <2\varepsilon n$ for all $u,u' \in B_i$.

In other words, we have partitioned $V(G)$ into $\poly(1/\varepsilon)$ parts, such that every pair of vertices in the same part have the same neighborhood, up to an error of $O(\varepsilon n)$. At this point, a short double-counting argument (see \cite[Lemma 2.3]{MR3943496}) shows that almost all pairs $(A_i,B_j)$ are $O(\varepsilon)$-homogeneous.

Let us attempt to extend such a proof to the uniformity-$3$ setting. So let $\h$ be a tripartite $3$-graph with parts $A,B,C$, where $\ab A = \ab B = \ab C=n$, and suppose that $\h$ has bounded VC$_2$ dimension. For $c \in C$, let us denote by $N(c) \subseteq A \times B$ the \emph{link} of $c$, i.e.\ the set of pairs $(a,b)$ such that $(a,b,c) \in E(\h)$. Following the proof above, it is natural to select a maximal collection $c_1,\dots,c_k \in C$ with $\ab{N(c_i) \symmd N(c_j)} \geq \varepsilon n^2$, and to partition $C$ accordingly.

However, one immediately encounters a fundamental problem, which is that there is no analogue of Haussler's packing lemma for VC$_2$ dimension. Concretely, in the attempt above, we cannot get \emph{any} bound on $k$ only in terms of $\varepsilon$, hence we cannot use this technique to partition $V(\h)$ into $O_\varepsilon(1)$ parts. To see this, we define a tripartite $3$-graph $\h$ with parts $A,B,C$, where $\ab A = \ab B = \ab C = n$, as follows. For every $c \in C$, we pick uniformly random sets $X_c \subseteq A, Y_c \subseteq B$, making these choices independently. We then define
\begin{equation}\label{eq:complete bipartite links}
	E(\h) \coloneqq \{(x,y,c) : c \in C, x \in X_c, y \in Y_c\}.
\end{equation}
In other words, the link of every vertex $c \in C$ is precisely the complete bipartite graph between $X_c$ and $Y_c$. It is easy to see that, with probability $1$, $\h$ does not contain a tripartitely induced copy of $K_{2,2,2}^{(3)}\setminus e$, hence $\h$ has bounded VC$_2$ dimension (in fact, one can check that $\h$ has no shattered $K_{2,2}$, so its VC$_2$ dimension equals $1$). On the other hand, with high probability, we have that $\ab{X_c \symmd X_{c'}}, \ab{Y_c \symmd Y_{c'}} = \Omega(n)$, for all distinct $c,c' \in C$. Therefore $\ab{N(c) \symmd N(c')} = \Omega(n^2)$ for all $c,c' \in C$, and the approach discussed above would entail partitioning $C$ into singletons.

The fact that there is no analogue of Haussler's packing lemma for the VC$_2$ dimension demonstrates a fundamental difference between the notions of VC and VC$_2$ dimension. As remarked earlier, Fox, Pach, and Suk \cite{MR3943496} were able to use the same approach to prove a polynomial regularity lemma for hypergraphs which satisfy a much stronger form of bounded VC dimension, precisely because this stronger assumption does support such a packing lemma. However, if we only assume bounded VC$_2$ dimension, we need to use an alternative approach.

What we infer from the above discussion is that before we attempt to tackle the $3$-graph case, we should first prove \cref{thm:FPS} without relying on \cref{lem:haussler} (Haussler's packing lemma), hoping that such a proof can then be extended to the setting of $3$-graphs. This is precisely the content of the next subsection.

\subsubsection{A Haussler-free proof of Theorem \ref{thm:FPS}}\label{subsec:hausslerfree}

Our goal in this subsection is to sketch the proof of the following result.

\begin{proposition}\label{prop:graph double exp}
	Let $G$ be a graph with bounded VC dimension, and let $\varepsilon>0$. There is a partition of $V(G)$ into $K \leq 2^{2^{\poly(1/\varepsilon)}}$ parts such that a $(1-\varepsilon)$-fraction of the pairs $(x,y) \in V(G)^2$ lie in $\varepsilon$-homogeneous pairs of parts.
\end{proposition}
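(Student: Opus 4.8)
The plan is to retrace the skeleton of the Fox--Pach--Suk proof of \cref{thm:FPS}, but to excise the single place where it invokes Haussler's packing lemma (\cref{lem:haussler}) and to replace that step by a combination of (i) a cheap, cut-type (``weak'') regularity lemma and (ii) a counting argument that manufactures bipartitely induced copies of the forbidden graph $F_d$. As in the excerpt I would take $G$ to be bipartite with parts $X,Y$ of size $n$ (the general case is the same, and in fact slightly easier, since the ``diagonal'' pairs of parts then account for only an $O(1/K)$-fraction of vertex pairs). Write $d$ for the VC dimension of $G$, so that $F_d$---the bipartite graph with parts $[d]$ and $2^{[d]}$ in which every subset of $[d]$ occurs as a neighbourhood---is not a bipartitely induced subgraph of $G$.

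The engine is the following \emph{Haussler-free lemma}. There is $\gamma=\gamma(\varepsilon,d)$, which one may take to be a fixed power of $\varepsilon$, such that if $\mathcal P$ is a partition of $X$ and of $Y$ into at least $(1/\varepsilon)^{O_d(1)}$ parts, each of size at least a constant depending on $\varepsilon$ and $d$, and $\mathcal P$ is $\gamma$-regular in the cut sense (i.e.\ $G$ lies within $\gamma n^2$ in cut norm of the complete bipartite blow-up with densities $d(X_i,Y_j)$), then at most an $\varepsilon$-fraction of the pairs $(X_i,Y_j)$ have density in $[\varepsilon,1-\varepsilon]$. To prove it, suppose more than an $\varepsilon$-fraction of these pairs are of mid-range density. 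The bipartite reduced graph on the parts then has edge density exceeding $\varepsilon$, so by the K\H{o}v\'ari--S\'os--Tur\'an theorem it contains a copy of $K_{d,2^d}$: there are distinct parts $X_{i_1},\dots,X_{i_d}$ and $Y_{j_S}$ $(S\subseteq[d])$ with every cross-density $d(X_{i_p},Y_{j_S})$ in $[\varepsilon,1-\varepsilon]$. Placing the vertices of $F_d$ into these parts, one per part, the cut-norm counting lemma---applied to the inclusion--exclusion expansion of the \emph{induced} count, which has only $O_d(1)$ terms as $F_d$ is fixed---shows that the number of bipartitely induced copies of $F_d$ obtained this way equals $\prod_{p\in S} d(X_{i_p},Y_{j_S})\prod_{p\notin S}\big(1-d(X_{i_p},Y_{j_S})\big)$ times the product of the relevant part sizes, up to an error of $O_d(\gamma)$ times that product. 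Each of the $d\cdot 2^d$ density factors lies in $[\varepsilon,1-\varepsilon]$, so the main term is at least $\varepsilon^{d2^d}$ times the product of part sizes; taking $\gamma$ a sufficiently small fixed power of $\varepsilon$ (and the parts large enough) makes the count positive, contradicting that $F_d$ is not bipartitely induced in $G$.

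With the lemma in hand, I would derive \cref{prop:graph double exp} as follows. If $n$ is below a constant depending only on $\varepsilon$ and $d$, take the partition into singletons, which is trivially homogeneous. Otherwise apply a weak regularity lemma---the Frieze--Kannan lemma already suffices, and this is the role the authors' cylinder regularity lemma will play---to $G$, refined along the bipartition, to obtain a $\gamma$-regular (cut-sense) partition of $X$ and of $Y$ into $K\le 2^{\poly(1/\varepsilon)}\le 2^{2^{\poly(1/\varepsilon)}}$ parts; refining further if needed we may assume there are at least $(1/\varepsilon)^{O_d(1)}$ parts, and since the partition is near-balanced each part has size $\Omega(n/K)$, which exceeds the threshold of the lemma once $n$ is large. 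The contrapositive of the Haussler-free lemma then says that all but an $\varepsilon$-fraction of the pairs $(X_i,Y_j)$ are $\varepsilon$-homogeneous; together with the same-side pairs $(X_i,X_j),(Y_i,Y_j)$, which have density $0$, this gives that at least a $(1-\varepsilon)$-fraction of the pairs $(x,y)\in V(G)^2$ lie in $\varepsilon$-homogeneous pairs of parts.

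The crux, I expect, is the Haussler-free lemma, and in particular making its embedding/counting work off of \emph{weak} (cut-type) regularity rather than the full Szemer\'edi regularity lemma: it is exactly the wish to avoid Szemer\'edi's lemma---whose use would force a tower-type bound---that makes this delicate, since the forbidden configuration $F_d$ must be extracted from a positive density of mid-range pairs in the reduced graph combined with mere cut-norm control, rather than from honest pairwise quasirandomness, and the quantitative trade-off (running the weak regularity lemma with $\gamma$ only a fixed power of $\varepsilon$ while keeping $K$ at most (doubly) exponential) is what has to be balanced. This is also precisely the point that must be re-engineered to reach $3$-graphs of bounded VC$_2$ dimension: weak hypergraph regularity supports no counting lemma, so one cannot imitate the above directly, and one instead needs the hypergraph version of the cylinder regularity lemma together with a matching counting statement to push the same strategy through.
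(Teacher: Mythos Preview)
There is a genuine gap in the ``Haussler-free lemma'', specifically in the counting step. Frieze--Kannan regularity controls only the \emph{global} cut norm $\|G-G_{\mathcal P}\|_\square\le\gamma$, and this does not give the localized counting you claim. When you place the $d+2^d$ vertices of $F_d$ into specified parts $V_{i_1},\dots,V_{i_{d+2^d}}$, the standard telescoping bounds each edge-swap by $4\gamma n^2\prod_{j\neq u,v}|V_{i_j}|$, so the total error is $O_d(\gamma K^2)\prod_j|V_{i_j}|$, not $O_d(\gamma)\prod_j|V_{i_j}|$. With $K=2^{\poly(1/\gamma)}$ this relative error explodes, and no choice of $\gamma=\poly(\varepsilon)$ makes it smaller than the main term $\varepsilon^{d2^d}$. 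The underlying point is that cut-norm control says nothing about the internal structure of any individual pair $(V_i,V_j)$: a mid-density pair could be highly structured and contain no induced $F_d$ whatsoever. Your parenthetical that ``this is the role the authors' cylinder regularity lemma will play'' conflates two different objects; the cylinder lemma is not a cut-norm statement.

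The paper instead applies the Duke--Lefmann--R\"odl cylinder regularity lemma, which partitions $A\times B$ into at most $2^{\poly(1/\eta)}$ cylinders $A_i\times B_i$ (the $A_i$ are not disjoint, nor are the $B_i$), most of which are $\eta$-quasirandom in the \emph{strong} (Szemer\'edi) sense. Strong regularity does support the induced counting lemma, so each quasirandom cylinder is forced to be $\delta$-homogeneous. Only after this does one pass to a genuine vertex partition, by taking the Venn diagram of all the $A_i$'s and of all the $B_i$'s; this can exponentiate the number of parts and is where the second exponential in $2^{2^{\poly(1/\varepsilon)}}$ comes from. The key observation making this work is that $\delta$-homogeneity, unlike quasirandomness, survives arbitrary refinement by a Markov argument. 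So the double exponential is not ``weak regularity with an exponential bound'' but rather ``cylinder regularity with an exponential bound, followed by a Venn-diagram refinement''.
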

Note that this result is much weaker than \cref{thm:FPS}, as the number of parts is double-exponential, rather than the optimal polynomial dependence in \cref{thm:FPS}. However, the proof strategy we now discuss has the advantage of being robust enough that it can be extended to the setting of $3$-graphs with bounded VC$_2$ dimension (with some significant technical difficulties). Furthermore, this double-exponential loss will turn out to be negligible in the $3$-uniform setting.

For simplicity, let us first add the assumption that $G$ is bipartite with parts $A,B$. The key new idea in the proof is to not try to immediately construct a partition of $V(G)$. Instead, we begin by finding a weaker structure, called a \emph{cylinder partition}.
\begin{definition}
	Let $G$ be a bipartite graph with parts $A,B$. A \emph{vertex cylinder} is simply a product set $A' \times B' \subseteq A \times B$, where $A' \subseteq A, B' \subseteq B$. A cylinder $A' \times B'$ is $\eta$-quasirandom if the pair $(A',B')$ is $\eta$-quasirandom in $G$. A \emph{(vertex) cylinder partition} of $G$ is a partition of $A \times B$ into vertex cylinders $A_1 \times B_1,\dots,A_k \times B_k$.
\end{definition}
The \emph{cylinder regularity lemma} of Duke, Lefmann, and R\"odl\footnote{In fact, for bipartite graphs, the cylinder regularity lemma essentially goes back to the original work of Szemer\'edi \cite{MR369312} on arithmetic progressions in dense sets of integers, and precedes the ``usual'' regularity lemma by several years.} \cite{MR1333857} states that every bipartite graph has a cylinder partition into $k \leq 2^{\poly(1/\eta)}$ parts, such that a $(1-\eta)$-fraction of the pairs $(x,y) \in A \times B$ lie in an $\eta$-quasirandom cylinder $A_i \times B_i$. Let us fix some parameter $\eta = \poly(\varepsilon)>0$ depending polynomially on $\varepsilon$, and fix some $\eta$-quasirandom cylinder partition of $G$ with $k \leq 2^{\poly(1/\eta)} = 2^{\poly(1/\varepsilon)}$ parts.

Now consider some $\eta$-quasirandom cylinder $A_i \times B_i$. The key observation is that the edge density $d(A_i,B_i)$ must be very close to $0$ or $1$. Indeed, suppose that $\delta \leq d(A_i,B_i) \leq 1-\delta$, for some $\delta>0$. The \emph{induced counting lemma} (e.g.\ \cite[Lemma 3.2]{MR1804820}) implies that if $\eta< \delta^{d^2}$, then the pair $(A_i,B_i)$ contains a bipartitely induced copy of every bipartite graph with both parts of size $d$. However, since we assumed that $G$ has bounded VC dimension, it does not contain any bipartitely induced copy of some fixed bipartite graph, a contradiction. Thus, we conclude that every $\eta$-quasirandom cylinder in our partition must be $\delta$-homogeneous.

We are almost done; all that remains is to convert this cylinder partition into an honest partition of $V(G)$. There is an obvious way of doing this, namely to take the \emph{Venn diagram partition}, namely to partition $A$ according to the common refinement of all the sets $A_i$, and similarly for $B$. By doing this, we may exponentiate the number of parts, so we end up partitioning $V(G)$ into $K \leq 2^{O(k)} \leq 2^{2^{\poly(1/\varepsilon)}}$ parts.

If all we knew about the cylinder partition was that most cylinders are quasirandom, this structure would be completely destroyed in the Venn diagram partition; it is for this reason that one cannot simply prove Szemer\'edi's regularity lemma by applying the cylinder regularity lemma and then taking the Venn diagram partition. However, we have more information, namely that most cylinders in the partition are $\delta$-homogeneous. A simple application of Markov's inequality now shows that if we pick $\delta = \varepsilon^2$, then this homogeneity property \emph{is} preserved when we pass to the Venn diagram partition. Concretely, if a $(1-\varepsilon^2)$-fraction of the pairs of vertices lie in $\varepsilon^2$-homogeneous cylinders, then in \emph{any} refinement of the partition, a $(1-\varepsilon)$-fraction of the pairs of vertices lie in $\varepsilon$-homogeneous pairs of parts. To conclude the proof, we note that we only take $\delta$ to depend polynomially on $\varepsilon$ and $\eta$ to depend polynomially on $\delta$ (with some dependence on the VC dimension of $G$, which we treat as a fixed constant), hence we have $\eta = \poly(\varepsilon)$ as promised.

Finally, let us discuss how to lift the assumption that $G$ is bipartite. The cylinder regularity lemma actually holds for $t$-partite graphs, for any $t\geq 2$. Namely, if $G$ is $t$-partite with parts $X_1,\dots,X_t$, a cylinder partition is a partition of the product set $X_1 \dtimes X_t$ into cylinders $X_1' \dtimes X_t'$. Duke, Lefmann, and R\"odl \cite{MR1333857} proved that one can find such a cylinder partition with $2^{\poly(t/\eta)}$ parts, such that a $(1-\eta)$-fraction of tuples $(x_1,\dots,x_t)$ lie in cylinders $X_1'\dtimes X_t'$ such that all pairs $(X_i',X_j')$ are $\eta$-quasirandom. Now, given a general graph $G$, we first begin by arbitrarily partitioning it into $t=1/\varepsilon$ parts of equal sizes, and treating it as a $t$-partite graph (i.e.\ forgetting the edges inside a part). We now run the argument above, noting that since $t = \poly(1/\varepsilon)$ and since the cylinder regularity lemma has a bound of the form $2^{\poly(t/\eta)}$, we still end up with a partition of $V(G)$ into $2^{2^{\poly(1/\varepsilon)}}$ parts, such that almost all pairs are $\varepsilon$-homogeneous. Finally, we note that by the choice of $t=1/\varepsilon$, the same conclusion holds (up to replacing $\varepsilon$ by $2\varepsilon$) even if we bring back the edges we ignored when we artificially made $G$ $t$-partite. This concludes the proof sketch of \cref{prop:graph double exp}.

\subsubsection{Proof sketch of Theorem \ref{thm:main UB informal}}\label{sec:our approach sketch}

In order to use the strategy we described in the previous subsection in uniformity $3$, we need a few ingredients. First, we need a $3$-graph analogue of the cylinder regularity lemma, which is the main technical contribution of this paper (see \cref{thm:hyper cylinder} for the statement). Crucially, this cylinder regularity lemma yields a partition whose size is only of tower type, much better than the wowzer-type bounds needed to obtain a full regularity partition; this is analogous to the situation in graphs, where the full regularity lemma requires tower-type bounds, but the cylinder regularity lemma requires only exponential bounds. Unsurprisingly, even the notion of a cylinder partition becomes substantially more complicated when we move to hypergraphs. We defer a detailed discussion of these definitions to \cref{sec:cylinder defs}.

Let us now fix such a cylinder partition of $\h$, as given by the cylinder regularity lemma. The next thing we need is an induced counting lemma, in order to conclude that all the quasirandom cylinders are actually very sparse or very dense. As discussed earlier, the main issue in the development of the hypergraph regularity method is finding notions of quasirandomness that can yield such counting lemmas, while also supporting a corresponding regularity lemma. For our application, it is crucial that the counting lemma only requires polynomial bounds, so that we may set $\eta= \poly(\varepsilon)$ as we did in the graph case (at the end of the penultimate paragraph of \cref{subsec:hausslerfree}). While it is plausible that each of the notions of hypergraph regularity does indeed only require polynomial losses in the counting lemma, as far as we are aware, no one has proved this for the regularity notion of R\"odl et al.~\cite{MR2167756}. On the other hand, Gowers's \cite{MR2373376} approach to the hypergraph regularity lemma does give a counting lemma with polynomial bounds, hence we use Gowers's notion throughout. However, we remark that the final outcome is essentially agnostic\footnote{It is known that all the notions of hypergraph regularity are qualitatively equivalent (see e.g.\ \cite{MR4816408}). However, the proofs of these equivalences require applications of the hypergraph regularity lemma itself, and hence are not quantitatively effective. Since the main thrust of \cref{thm:main UB informal} is the quantitative improvement, we cannot freely pass between different regularity notions.} about which notion of regularity is used, because our final partition actually has the stronger $\varepsilon$-homogeneity property, which is easily seen to be stronger than each of the notions of $\varepsilon$-regularity.

We now have a cylinder partition of $\h$ in which almost all cylinders are very sparse or very dense. As before, we now pass to the Venn diagram partition, obtaining a genuine vertex partition and edge partition of $\h$. Unlike in the graph case, this step is essentially ``for free'': although we exponentiate the number of parts, this loss is negligible given that the number of parts in the cylinder partition is already of tower type. As before, this step maintains the $\varepsilon$-homogeneity property, that is, almost all triples of vertices now lie in parts that are very sparse or very dense.

We are almost done, except that in a hypergraph regularity partition, we require the graphs arising from the edge partition to themselves be quasirandom. This was the case in the cylinder partition, but this property may have been destroyed when passing to the Venn diagram partition. Hence, we now need to apply Szemer\'edi's regularity lemma in order to further refine the vertex partition, and to recover the quasirandomness of the graphs. This final step costs us another tower function, which is why the final bound is of double-tower type. However, we stress that we only apply the graph regularity lemma \emph{once}, whereas it needs to be applied iteratively in the proof of the usual hypergraph regularity lemma\footnote{That said, the cylinder partition we use {\em is} required to have its underlying graph be quasirandom, in order to apply the induced counting lemma and conclude that almost all cylinders are very sparse or very dense. As such, in the proof of our cylinder regularity lemma for hypergraphs, we are required to regularize the graph at each step of the iteration. However, as we are working with cylinder partitions, we may apply the cylinder regularity lemma for graphs, rather than Szemer\'edi's regularity lemma, at every step of the iteration, and thus only pay an exponential loss at every step; this is why our cylinder regularity lemma ends up incurring tower-type bounds.}. This difference is what ensures that we end up with a final bound of tower type, rather than wowzer type. To summarize, in the proof of the hypergraph regularity lemma, one iteratively applies Szemer\'edi's regularity lemma, leading to wowzer-type bounds. In our proof, we iteratively apply the {cylinder} regularity lemma for graphs (incurring tower-type dependencies), and then apply Szemer\'edi's regularity lemma only once (losing one more tower).

\subsubsection{Further directions}
The cylinder regularity lemma for graphs is extremely useful and has numerous applications, so we naturally expect our cylinder regularity lemma for $3$-graphs to have many applications as well. We discuss two such applications in \cref{sec:cylinder apps}. In particular, one of these applications is a $3$-graph extension of a famous theorem of R\"odl \cite{MR0837962}, closely related to the Erd\H os--Hajnal conjecture (see \cite{2403.08303}), about sparse or dense sets in graphs forbidding a fixed induced subgraph. As observed already in R\"odl's original paper \cite{MR0837962}, the most natural extension of this theorem to $3$-graphs is false; nonetheless, our cylinder regularity lemma naturally implies that a subtler statement\footnote{In fact, the same qualitative statement can be deduced from the full hypergraph regularity lemma, but with worse quantitative bounds.} is true. As our cylinder regularity lemma invokes tower-type bounds, we obtain tower-type bounds in our applications, and we would be very interested to see if these bounds could be improved; see \cref{qu:subtower for applications} for details.

Finally, we remark that there are natural generalizations of all questions studied in this paper to hypergraphs of higher uniformity. We plan to address such questions in forthcoming work.

\paragraph{Organization}
The rest of this paper is organized as follows. In \cref{sec:definitions}, we define the relevant notions of hypergraph regularity and give formal statements of our main theorems. We develop the cylinder regularity lemma for hypergraphs in \cref{sec:cylinder}. We use the cylinder regularity lemma, and the proof strategy outlined above, to prove \cref{thm:main UB informal} in \cref{sec:UB proof}. In \cref{sec:LB proof}, we prove the lower bound, \cref{thm:main LB informal}, from the results of Terry \cite{2404.01293,2404.02024}. We conclude in \cref{sec:conclusion} with some final remarks and open problems.

\section{Formal definitions and statements}\label{sec:definitions}
\subsection{Definitions regarding hypergraph regularity}
As mentioned in the introduction, we use Gowers's notion of hypergraph regularity \cite{MR2195580,MR2373376}. In order to make this paper as accessible as possible to non-experts, we follow the notation of \cite{MR2195580} whenever possible. We will give minimal explanation or justification for most of these definitions, and defer to \cite{MR2195580}; that paper is an excellent resource for those wishing to learn about hypergraph regularity, and includes detailed explanations of the various notions we use.

We begin by defining what it means for a pair of vertex sets in a graph to be quasirandom. We recall that $d_G(X,Y)$ denotes the edge density between two vertex sets in a graph $G$; we now start including the $G$ subscript (which we omitted before), in order to disambiguate densities of different objects.
\begin{definition}[{\cite[Definitions 3.2 and 3.3]{MR2195580}}]\label{def:graph quasirandomness}
	Let $X,Y$ be finite sets, and let $f: X \times Y \to [-1,1]$ be a function. For a real number $\alpha>0$, we say that $f$ is \emph{$\alpha$-quasirandom} if
	\[
		\sum_{x,x' \in X} \sum_{y,y' \in Y} f(x,y) f(x',y) f(x,y') f(x',y') \leq \alpha \ab X^2 \ab Y^2.
	\]
	Let $G$ be a graph, and let $X,Y \subseteq V(G)$. We say that the pair $(X,Y)$ is \emph{$\alpha$-quasirandom} if the function $f:X \times Y \to [-1,1]$ defined by
	\[
		f(x,y) \coloneqq \1_{xy \in E(G)} - d_G(X,Y)
	\]
	is $\alpha$-quasirandom.

	If $G$ is a $t$-partite graph with parts $X_1,\dots,X_t$, we say that $G$ is \emph{$\alpha$-quasirandom} if each of the pairs $(X_i,X_j)$ for $1 \leq i<j\leq t$ is $\alpha$-quasirandom.
\end{definition}
This is a different definition than the definition of $\alpha$-quasirandomness given in the introduction, but it is not hard to show, via a few applications of the Cauchy--Schwarz inequality, that these two definitions are equivalent\footnote{This equivalence is (essentially) one of the many equivalences proven in the seminal work of Chung, Graham, and Wilson \cite{MR1054011}, namely that quasirandomness can be characterized by $C_4$ counts. Indeed, the sum appearing in \cref{def:graph quasirandomness} is, up to lower-order terms, equal to the count of 4-cycles in $X \times Y$, minus the expected number in a random graph of the same density.} up to a polynomial change in the value of $\alpha$. See \cite[Theorem 3.1]{MR2195580} for details.

The definition of quasirandomness for $3$-graphs will be similar, but as discussed in the introduction, the quasirandomness notion has to do with the way in which the edges of a $3$-graph lie within the triangles of a graph on the same vertex set. To discuss this structure, we make the following definition.
\begin{definition}[{\cite[Definition 6.1]{MR2195580}}]
	Given a graph $G$, let $\Delta(G)$ denote the set of triangles in $G$. A \emph{chain} is a pair $(G,\h)$, where $G$ is a graph and $\h$ is a $3$-graph, such that $G$ and $\h$ have the same vertices, and $E(\h) \subseteq \Delta(G)$.
	
	Similarly, if $G$ is a tripartite graph with parts $X,Y,Z$, we say that a function $f:X \times Y \times Z \to \R$ is \emph{supported on $\Delta(G)$} if $f(x,y,z)\neq 0$ only if $xyz \in \Delta(G)$.

	Given $t \geq 3$, a \emph{$t$-partite chain} is a chain in which $G$ is a $t$-partite graph, $\h$ is a $t$-partite $3$-graph, both with respect to the same partition $V(G)=V(\h) = X_1 \sqcup \dots \sqcup X_t$.

	If $(G,\h)$ is a chain, the \emph{relative density} of $\h$ in $G$ is defined as
	\[
		d(\h\mid  G) \coloneqq \frac{\ab{E(\h)}}{\ab{\Delta(G)}},
	\]
	with the convention that $0/0=0$.
\end{definition}
We are now ready to define quasirandomness\footnote{We remark that, similarly to how \cref{def:graph quasirandomness} captures the count of $C_4=K_{2,2}$ in a bipartite graph, \cref{def:quasirandom chain} captures the count of the octahedron hypergraph $K_{2,2,2}^{(3)}$.} for a $3$-graph relative to a graph.
\begin{definition}[{\cite[Definition 6.3]{MR2195580}}]\label{def:quasirandom chain}
	Let $G$ be a tripartite graph with parts $X,Y,Z$, and let $f:X \times Y \times Z \to [-1,1]$ be a function supported on $\Delta(G)$. Given $\eta>0$, we say that $f$ is \emph{$\eta$-quasirandom relative to $G$} if
	\begin{multline*}
		\sum_{\substack{x,x' \in X\\y,y' \in Y\\z,z' \in Z}} f(x,y,z) f(x,y,z') f(x,y',z) f(x,y',z') f(x',y,z) f(x',y,z') f(x',y',z) f(x',y',z') \\
		\leq \eta [d_G(X,Y) d_G(X,Z) d_G(Y,Z)]^4 \ab X^2 \ab Y^2 \ab Z^2.
	\end{multline*}
	If $(G,\h)$ is a tripartite chain, we say that $\h$ is \emph{$\eta$-quasirandom relative to $G$} if the function $f:X \times Y \times Z \to [-1,1]$ defined by
	\[
		f(x,y,z) \coloneqq (\1_{xyz \in E(\h)} - d(\h\mid G))\1_{xyz \in \Delta(G)}
	\]
	is $\eta$-quasirandom.

	Finally, if $(G,\h)$ is a $t$-partite chain for some $t \geq 3$, then we say that $\h$ is \emph{$\eta$-quasirandom relative to $G$} if this holds for each of the $\binom t3$ tripartite subchains.
\end{definition}
As discussed in the introduction, in our regularity decompositions, we require not only that $\h$ is quasirandom with respect to the graphs in the edge partition, but also that these graphs are themselves highly quasirandom. This idea is captured in the following definition.
\begin{definition}[{\cite[Definition 8.5]{MR2195580}}]\label{def:product density}
	Let $(G,\h)$ be a $t$-partite chain with parts $X_1,\dots,X_t$. The \emph{product density} of $G$ is defined as
	\[
		\delta(G) \coloneqq \prod_{1 \leq i <j \leq t} d_G(X_i,X_j).
	\]
	Given a number $\eta>0$ and a function $\psi:(0,1) \to (0,1)$, we say that the chain $(G,\h)$ is \emph{$(\eta,\psi)$-quasirandom} if $\h$ is $\eta$-quasirandom relative to $G$, and if $G$ is $\psi(\delta(G))$-quasirandom.
\end{definition}
Finally, in order to state the hypergraph regularity lemma, we need to define chain partitions of hypergraphs.
\begin{definition}[{\cite[Definition 8.7]{MR2195580}}]\label{def:chain partitions}
	Let $\h$ be a $3$-graph. A \emph{chain partition} of $\h$ is a pair $\Q=(\Q_V,\Q_E)$, where $\Q_V$ is a partition of $V(\h)$ into parts, and $\Q_E$ consists, for all distinct $Y_i,Y_j \in \Q_V$, of a partition $\Q_E(Y_i \times Y_j)$ of the complete bipartite graph $Y_i \times Y_j$ into subgraphs.

	For a vertex $x \in V(\h)$, we denote by $\Q_V[x]$ the unique part of $\Q_V$ containing $x$. For distinct $x_1,x_2 \in V(\h)$, we denote by $\Q_E[x_1,x_2]$ the unique part of $\Q_E(\Q_V[x_1]\times \Q_V[x_2])$ containing the edge $(x_1,x_2)$. Note that this notation is only well-defined in case $x_1$ and $x_2$ lie in distinct parts of $\Q_V$.

	Given disjoint sets $Y_1,Y_2,Y_3 \subseteq V(\h)$ and $Z_{12} \subseteq Y_1 \times Y_2, Z_{13} \subseteq Y_1 \times Y_3, Z_{23} \subseteq Y_2 \times Y_3$, we denote by $\h[Y_1 \cup Y_2 \cup Y_3; Z_{12} \cup Z_{13} \cup Z_{23}]$ the chain $(G,\h')$, where $V(G)=Y_1 \cup Y_2 \cup Y_3$, $E(G)=Z_{12} \cup Z_{13} \cup Z_{23}$, and where $\h'$ is the tripartite subgraph of $\h$ with $V(\h')=Y_1 \cup Y_2 \cup Y_3$ and $E(\h') = E(\h) \cap \Delta(G)$.

	Given a chain partition $\Q$ and vertices $x_1,x_2,x_3 \in V(\h)$, we denote by $\h[\Q;x_1,x_2,x_3]$ the chain $\h[\Q_V[x_1] \cup \Q_V[x_2] \cup \Q_V[x_3]; \Q_E[x_1,x_2] \cup \Q_E[x_1,x_3] \cup \Q_E[x_2,x_3]]$. Again, this notation is only well-defined if $x_1,x_2,x_3$ lie in different parts of $\Q_V$.

	Finally, a chain partition $\Q=(\Q_V,\Q_E)$ of $\h$ is said to be \emph{$(\eta,\psi)$-quasirandom} if, for at least a $(1-\eta)$-fraction of the triples $(x_1,x_2,x_3) \in V(\h)^3$, the chain $\h[\Q;x_1,x_2,x_3]$ is $(\eta,\psi)$-quasirandom as in \cref{def:product density}.
\end{definition}
Recall that the notation $\h[\Q;x_1,x_2,x_3]$ is only defined for triples $(x_1,x_2,x_3)$ lying in three different parts of $\Q$. Thus, when we say that the quasirandomness holds for at least a $(1-\eta)$-fraction of triples, we mean that this notation is defined, and is $(\eta,\psi)$-quasirandom, for at least $(1-\eta)\ab{V(\h)}^3$ triples.

We are finally ready to state the regularity lemma for $3$-graphs.
\begin{theorem}[{\cite[Theorem 8.10]{MR2195580}}]\label{thm:gowers regularity}
	For all $\eta>0$ and $\psi:(0,1) \to (0,1)$, there exists a constant $K=K(\eta,\psi)$ such that the following holds. For every $3$-graph $\h$, there is a chain partition $\Q=(\Q_V,\Q_E)$ which is $(\eta,\psi)$-quasirandom.
	
	Additionally, $\ab{\Q_V} \leq K$ and $\ab{\Q_E(Y_i \times Y_j)} \leq K$ for all distinct $Y_i,Y_j \in \Q_V$. Moreover, $K$ is bounded by $\poly(1/\eta)$ iterations of the function $\twr(\poly(1/\psi(\eta)))$.
\end{theorem}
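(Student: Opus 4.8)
The plan is to establish this by an \emph{energy-increment} argument with a two-level structure mirroring that of a chain partition. Throughout the iteration I maintain a chain partition $\Q = (\Q_V,\Q_E)$ of $\h$ and track two bounded ``energy'' (mean-square density) functionals: a \emph{graph energy} $q_{\mathrm{gr}}(\Q)$, essentially the size-weighted average of $d_G(W)^2$ over the bipartite cells $W$ appearing in $\Q_E$, and a \emph{hypergraph energy} $q_{\mathrm{hyp}}(\Q)$, essentially the average of $d(\h\mid G)^2$ over the triple-cells $\h[\Q;x_1,x_2,x_3]$, weighted by their $G$-triangle counts. Both are normalized to lie in $[0,1]$, so each can increase by any fixed positive amount only boundedly many times, and neither decreases when $\Q$ is refined, by the usual Pythagoras / defect-Cauchy--Schwarz inequality for conditional expectations.

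The engine of the argument is a pair of inverse lemmas for the quasirandomness notions of \cref{def:graph quasirandomness,def:quasirandom chain}. For graphs this is classical: if a subgraph $W \subseteq Y_i \times Y_j$ fails to be $\alpha$-quasirandom, a few applications of Cauchy--Schwarz produce $A \subseteq Y_i$ and $B \subseteq Y_j$ with $\ab{e_W(A,B) - d_G(W)\ab{A}\ab{B}} \geq \poly(\alpha)\ab{Y_i}\ab{Y_j}$, and refining $\Q$ accordingly increases $q_{\mathrm{gr}}$ by $\poly(\alpha)$. The $3$-graph analogue is the technical heart of the proof: if $\h$ fails to be $\eta$-quasirandom relative to a tripartite graph $G$ that is itself sufficiently quasirandom, then the associated function $f = (\1_{E(\h)} - d(\h\mid G))\1_{\Delta(G)}$ correlates, at scale $\poly(\eta)$ times the $G$-triangle count, with a function of the ``product'' form $\1_{xy \in P_{12}}\,\1_{xz \in P_{13}}\,\1_{yz \in P_{23}}$, where each $P_{ij}$ is contained in the relevant bipartite cell. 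I would prove this by peeling off one pair of vertices at a time in the octahedral sum of \cref{def:quasirandom chain} via Gowers--Cauchy--Schwarz, using the quasirandomness of $G$ to keep the various error terms and the $G$-triangle count from degenerating, and then passing from the resulting bounded product function to one built from indicators by averaging. Given such a correlation, refining $\Q_E$ on the three relevant pairs of vertex parts by $P_{12},P_{13},P_{23}$ increases $q_{\mathrm{hyp}}$ by $\poly(\eta)$.

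With these tools, the iteration runs as follows. Given $\Q$, first execute an \emph{inner loop} that regularizes the bipartite cells: while more than an $\eta$-fraction of triples lie in chains $\h[\Q;x_1,x_2,x_3]$ whose underlying graph is not $\psi(\delta(G))$-quasirandom, refine $\Q$ Szemer\'edi-style to fix the offending cells; this strictly increases $q_{\mathrm{gr}}$, so the loop halts after $\poly(1/\psi(\eta))$ passes, and since each pass can exponentiate the partition, the inner loop blows it up by a $\twr(\poly(1/\psi(\eta)))$-type amount. When the inner loop halts, $G$ is quasirandom on almost every chain; now test whether $\h$ is $\eta$-quasirandom relative to $G$ on a $(1-\eta)$-fraction of triples. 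If so, the partition is $(\eta,\psi)$-quasirandom and we are done. If not, apply the $3$-graph inverse lemma on the bad chains, refine $\Q_E$ accordingly --- gaining $\poly(\eta)$ in $q_{\mathrm{hyp}}$, which no later refinement undoes --- and return to the inner loop. Since $q_{\mathrm{hyp}} \leq 1$, this \emph{outer step} occurs at most $\poly(1/\eta)$ times, and each outer step is followed by one full run of the inner loop; composing, the final number of parts $K$ is bounded by $\poly(1/\eta)$ iterations of $\twr(\poly(1/\psi(\eta)))$, as claimed. Equalizing $\Q_V$ at the end (or maintaining equitability throughout, as in Szemer\'edi's proof) changes the bound only polynomially.

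The hard part will be the $3$-graph inverse lemma: extracting the product-form correlated function from a large octahedral sum while correctly tracking every error term in terms of the densities $d_G(X_i,X_j)$, which is where the hypothesis that $G$ is already quasirandom is genuinely used. A secondary difficulty is the interleaving of the two loops --- one must choose the fraction-thresholds for the inner and outer loops with enough separation of scale that ``almost every triple lies in a good chain'' survives all the refinements, and that refining $\Q_E$ in an outer step neither spoils the accumulated $q_{\mathrm{hyp}}$ gain nor, once the inner loop is re-run, the conclusion about $G$; both are handled by the monotonicity of the energies under refinement and the stability of the relevant density statistics.
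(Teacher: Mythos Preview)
The paper does not give its own proof of this theorem; it is quoted from Gowers \cite[Theorem 8.10]{MR2195580}, with the explicit quantitative bound credited to \cite[Appendix A]{2404.02024}. Your outline is essentially Gowers's argument: a two-level energy increment, with an inner Szemer\'edi loop to make the bipartite cells quasirandom and an outer step driven by a $3$-graph inverse lemma, the latter being exactly the paper's \cref{lem:one cylinder}. Indeed, the paper's proof of its cylinder regularity lemma, \cref{thm:hyper cylinder with delta}, runs the same template --- mean-squared density, \cref{lem:one cylinder} on the bad chains, then re-regularize the graph --- but with the graph \emph{cylinder} regularity lemma (\cref{lem:graph cylinder}) replacing Szemer\'edi in the inner loop, which is precisely what drops the per-step cost from tower to exponential there.

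One small inaccuracy worth flagging: you write that the inner loop ``halts after $\poly(1/\psi(\eta))$ passes''. In fact the graph-quasirandomness target is $\psi(\delta(G))$, and $\delta(G)$ shrinks as the edge partition $\Q_E$ is refined by successive outer steps, so the inner-loop cost at outer step $\tau$ depends on the current edge-partition size rather than on $\eta$ alone. This is exactly the mechanism by which the tower height grows from one outer step to the next (cf.\ the recursions for $A(\tau),B(\tau)$ in the proof of \cref{thm:hyper cylinder with delta}); it does not change the final bound you state, but the bookkeeping needs to acknowledge it.
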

The explicit bound on $K$ is discussed but not proved in \cite{MR2195580}, and a formal proof is given in \cite[Appendix A]{2404.02024}.

To conclude this section, we briefly discuss weak regularity for hypergraphs, as this will be relevant in the proof of \cref{thm:main LB informal}. The following definition is a direct extension of the definition of graph quasirandomness discussed in the introduction. It is not quite the definition introduced by Chung \cite{MR1099803}, but it is easily seen to be equivalent up to a polynomial change in $\eta$.
\begin{definition}\label{def:weak quasirandomness}
	Let $\h$ be a $3$-graph, and let $X_1,X_2,X_3 \subseteq V(\h)$. We let $e_\h(X_1,X_2,X_3)$ denote the number of triples in $X_1 \times X_2 \times X_3$ that are edges of $\h$, and let $d_\h(X_1,X_2,X_3) \coloneqq e_\h(X_1,X_2,X_3)/(\ab{X_1}\ab{X_2}\ab{X_3})$ be the edge density. We say that the triple $(X_1,X_2,X_3)$ is \emph{weakly $\eta$-quasirandom} if, for all $X_i' \subseteq X_i$, we have that
	\[
		\ab{e_\h(X_1',X_2',X_3') - d_\h(X_1,X_2,X_3)\ab{X_1'}\ab{X_2'}\ab{X_3'}} \leq \eta \ab{X_1}\ab{X_2}\ab{X_3}.
	\]
	A partition $\Q_V$ of $V(\h)$ is called \emph{weakly $\eta$-quasirandom} if, for at least a $(1-\eta)$-fraction of the triples $(x_1,x_2,x_3) \in V(\h)^3$, the triple $(\Q_V[x_1],\Q_V[x_2],\Q_V[x_3])$ is weakly $\eta$-quasirandom.
\end{definition}
With this definition, we may state Chung's weak hypergraph regularity lemma \cite{MR1099803}.
\begin{theorem}[{\cite[Theorem 2.1]{MR1099803}}]\label{thm:chung}
	For all $\eta>0$, there exists a constant $K=K(\eta)$ such that the following holds. For every $3$-graph $\h$, there is a weakly $\eta$-quasirandom partition of $V(\h)$ into at most $K$ parts. Moreover, $K \leq \twr(\poly(1/\eta))$.
\end{theorem}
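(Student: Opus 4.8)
The plan is to run the classical energy-increment argument of Szemer\'edi \cite{MR0540024}, adapted to the $3$-uniform setting; this is essentially how \cref{thm:chung} was originally proved, and the only extra work compared to the graph case is carrying three indices instead of two.

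For a partition $\p=\{V_1,\dots,V_m\}$ of $V=V(\h)$ with $n=|V|$, define the \emph{index}
\[
	\mathrm{ind}(\p)\coloneqq\sum_{i,j,k\in[m]}\frac{|V_i|\,|V_j|\,|V_k|}{n^3}\,d_\h(V_i,V_j,V_k)^2\in[0,1],
\]
which is just the mean square of the edge density over a uniformly random ordered triple of vertices. Two elementary facts drive the argument. First, $\mathrm{ind}$ is monotone under refinement: if $\Q$ refines $\p$ then $\mathrm{ind}(\Q)\geq\mathrm{ind}(\p)$, by convexity of $x\mapsto x^2$ applied separately within each box $V_i\times V_j\times V_k$ (the density $d_\h(V_i,V_j,V_k)$ being a weighted average of the densities of the sub-boxes in $\Q$). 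Second, a defect Cauchy--Schwarz estimate: if $(V_i,V_j,V_k)$ fails to be weakly $\eta$-quasirandom, witnessed by $V_i'\subseteq V_i$, $V_j'\subseteq V_j$, $V_k'\subseteq V_k$, then refining $V_i$ by $\{V_i',V_i\setminus V_i'\}$ and $V_j,V_k$ likewise increases the contribution of the box $V_i\times V_j\times V_k$ to $\mathrm{ind}(\p)$ by at least $\eta^2\,|V_i||V_j||V_k|/n^3$; writing $p$ for the relative size of $V_i'\times V_j'\times V_k'$ inside $V_i\times V_j\times V_k$ and $d,d'$ for the respective densities, this reduces to the identity $p(d')^2+(d-pd')^2/(1-p)-d^2=\tfrac{p}{1-p}(d'-d)^2\geq p(\eta/p)^2\geq\eta^2$.

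With these in hand I would iterate as follows. Start with a partition $\p_0$ into $\lceil 1/\eta\rceil$ nearly equal parts. Given $\p_t$, if it is already weakly $\eta$-quasirandom, stop; otherwise at least an $\eta$-fraction of triples of vertices lie in triples of parts that fail to be weakly $\eta$-quasirandom. For each such bad triple of parts fix a witnessing triple of subsets, and let $\p_{t+1}$ refine $\p_t$ by subdividing each $V_i$ according to all witness subsets of $V_i$ coming from the (at most $3|\p_t|^2$) bad triples in which $V_i$ participates. Writing $\mathrm{ind}(\p_{t+1})-\mathrm{ind}(\p_t)$ as a sum of nonnegative per-box increments and discarding all but the bad boxes, the defect estimate gives $\mathrm{ind}(\p_{t+1})-\mathrm{ind}(\p_t)\geq\sum_{(i,j,k)\text{ bad}}\eta^2|V_i||V_j||V_k|/n^3\geq\eta^3$. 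Since $\mathrm{ind}$ stays in $[0,1]$, the process halts after at most $\eta^{-3}$ steps. For the size bound, each part of $\p_t$ is split into at most $2^{3|\p_t|^2}$ pieces, so $|\p_{t+1}|\leq|\p_t|\,2^{3|\p_t|^2}$; iterating this doubly-exponential-per-step recursion $\poly(1/\eta)$ times from $|\p_0|=O(1/\eta)$ yields $|\p_t|\leq\twr(\poly(1/\eta))$.

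One cosmetic point remains: the definition of a weakly $\eta$-quasirandom partition also counts triples of vertices whose part-indices are not all distinct, about which the increment argument says nothing. This is handled by a final harmless refinement splitting every part to size at most $\eta n/10$ (which cannot decrease the index and multiplies the part count by only a $\poly(1/\eta)$ factor): the number of triples with a repeated part is then at most $3\max_i|V_i|\cdot n^2\leq\eta n^3$, so running the whole scheme with parameter $\eta/2$ absorbs these degenerate triples into the exceptional fraction. The only genuinely computational ingredient is the defect Cauchy--Schwarz inequality, which is routine; the one quantitative subtlety is checking that the per-step blow-up really compounds to a single tower rather than a wowzer, which it does precisely because the number of iterations, $\poly(1/\eta)$, does not depend on the current partition size.
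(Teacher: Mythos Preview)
The paper does not supply its own proof of \cref{thm:chung}; the result is simply quoted from Chung~\cite{MR1099803} as background. Your energy-increment argument is the standard proof---essentially Chung's original---and, incidentally, mirrors the mean-squared-density iteration the paper itself runs in the proof of \cref{thm:hyper cylinder with delta}. The defect Cauchy--Schwarz step, the per-box increment of $\eta^2$, the global increment of $\eta^3$, the termination after $O(\eta^{-3})$ rounds, and the resulting tower bound are all correct.

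One small wrinkle: you handle degenerate triples by a \emph{final} refinement splitting each part to size at most $\eta n/10$. But weak $\eta$-quasirandomness of a triple $(V_i,V_j,V_k)$ is not inherited by sub-triples in general, so splitting \emph{after} the iteration terminates could destroy the regularity you just established (the remark that the index cannot decrease is true but beside the point). The fix is trivial and you essentially already have it: do the split at the \emph{start} rather than the end. Begin with, say, $\lceil 10/\eta\rceil$ nearly equal parts so that $\max_i|V_i|\leq \eta n/10$ from the outset; since the iteration only refines, this bound persists throughout, the degenerate weight stays below $3\eta/10$ at every step, and running the increment with threshold $\eta/2$ on the non-degenerate bad triples gives total bad weight below $\eta$ at termination. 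This is purely a reordering and does not affect the tower bound.
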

As the names suggest, weak quasirandomness is a weaker notion than quasirandomness (see \cref{prop:strong implies weak} for details). In particular, \cref{thm:chung} follows from \cref{thm:gowers regularity}, except that the latter yields a worse bound on $K$.

\subsection{Formal statement of our main theorem}
We now formally state \cref{thm:main UB informal}, our strengthened regularity lemma for $3$-graphs of bounded VC$_2$ dimension. 
\begin{theorem}\label{thm:main UB formal}
	Fix an integer $d$. For all $\eta>0$ and all increasing polynomial functions $\psi:(0,1) \to (0,1)$ with $\psi(x)\leq x$, the following holds. Let $\h$ be a sufficiently large $3$-graph with VC$_2$ dimension at most $d$. There is a chain partition $\Q=(\Q_V,\Q_E)$ of $\h$ such that, for at least a $(1-\eta)$-fraction of the triples $(x_1,x_2,x_3) \in V(\h)^3$, the chain $(G,\h')\coloneqq \h[\Q;x_1,x_2,x_3]$ satisfies the following properties. The graph $G$ is $\psi(\delta(G))$-quasirandom, and the relative density $d(\h'\mid G)$ satisfies $d(\h'\mid G) \in [0,\eta] \cup [1-\eta,1]$.

	Additionally, 
	\[
		\ab{\Q_V} \leq \twr(\twr(\psi(\eta)^{-C})),
	\]
	where $C$ is a constant depending only on $d$. 
\end{theorem}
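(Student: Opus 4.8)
The plan is to follow the proof sketch in \cref{sec:our approach sketch} essentially verbatim, but with all quantitative dependencies tracked carefully. The proof has four stages: (1) apply the hypergraph cylinder regularity lemma (\cref{thm:hyper cylinder}) to obtain a quasirandom cylinder partition of $\h$ of tower-type size; (2) use the induced counting lemma for chains, together with the bounded VC$_2$ hypothesis, to argue that every quasirandom cylinder in this partition is either very sparse or very dense (i.e.\ has relative density in $[0,\delta]\cup[1-\delta,1]$); (3) pass to the Venn-diagram refinement, exponentiating the number of parts (harmless, since tower-type numbers absorb an exponential) and, via a Markov-inequality argument with $\delta=\poly(\eta)$, preserving the ``$\delta$-homogeneous on almost all triples'' property; (4) apply Szemer\'edi's (or the $t$-partite) regularity lemma \emph{once} to the resulting edge partition to restore the quasirandomness of the underlying graphs $G$, costing exactly one more tower and yielding the final double-tower bound.

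\textbf{Stage-by-stage plan.} First I would fix the parameters in the right order, working from the inside out: choose $\delta=\psi(\eta)^{c}$ for a suitable constant $c=c(d)$, then choose $\eta_{\mathrm{cyl}}$ (the quasirandomness parameter fed to the cylinder regularity lemma) polynomially small in $\delta$ and in $\psi(\delta)$, so that the induced counting lemma for chains applies with room to spare; the number of cylinders is then $\twr(\poly(1/\eta_{\mathrm{cyl}})) = \twr(\psi(\eta)^{-O(1)})$. In Stage 2, the key point is that if a quasirandom cylinder (with its quasirandom underlying tripartite graph) had relative density in $[\delta,1-\delta]$, then by the counting lemma it would contain a tripartitely induced copy of $\V_d$ (or of any fixed tripartite $3$-graph on parts of size $d$), contradicting VC$_2$ dimension at most $d$; I would need to be careful that the counting lemma is being applied to the chain restricted to a single cylinder, which is why the cylinder partition must come with quasirandom underlying graphs — this is exactly why \cref{thm:hyper cylinder} is formulated to provide that. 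In Stage 3, the Markov argument is the same one used in \cref{subsec:hausslerfree}: if a $(1-\delta)$-fraction of triples lie in $\delta$-homogeneous cylinders with $\delta = \eta^2$ (say), then in \emph{any} refinement a $(1-\eta)$-fraction of triples lie in $\eta$-homogeneous parts, because homogeneity of a superset forces near-homogeneity of subsets. In Stage 4, applying the graph regularity lemma once to each bipartite piece of the edge partition refines $\Q_V$ to make every (or almost every) underlying $G$ be $\psi(\delta(G))$-quasirandom; this multiplies the vertex-part count by at most $\twr(\poly(1/\psi(\cdot)))$, giving $\ab{\Q_V}\leq \twr(\twr(\psi(\eta)^{-C}))$.

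\textbf{Main obstacle.} The genuinely delicate part is Stage 4 — coordinating the final application of graph regularity with the homogeneity already established, without destroying it and without a second tower. Two issues arise. First, refining $\Q_V$ changes which triples lie in which parts, so I must check that $\eta$-homogeneity survives this refinement (it does, by the same subset-of-a-homogeneous-set argument, as long as the refinement is of the Venn-diagram partition from Stage 3). Second, the regularity lemma must be applied to \emph{all} bipartite graphs in the edge partition simultaneously and consistently, so that after refinement the edge classes restricted to the new vertex parts are each quasirandom; this requires applying a single common refinement, which is standard but must be set up so that the quasirandomness parameter passed in (a function of the product density $\delta(G)$ of the refined graph) is the one the statement demands. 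I would also need to verify that ``almost all triples'' bookkeeping composes correctly across Stages 1, 3, and 4 — each stage loses an $O(\eta)$ fraction of triples, so I should run each stage with $\eta/4$ (or similar) in place of $\eta$. The role of the hypotheses that $\psi$ is increasing and $\psi(x)\leq x$ is to ensure these nested polynomial substitutions stay polynomial and that $\psi(\delta(G))\leq\psi(\text{earlier density})$ when densities only shrink under refinement; I would flag where exactly monotonicity is used.
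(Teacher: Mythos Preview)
Your proposal is correct and follows essentially the same approach as the paper: the four stages you outline (cylinder regularity via \cref{thm:hyper cylinder}, induced counting to force homogeneity on quasirandom cylinders, Venn-diagram refinement with a Markov argument, one final application of graph regularity) match the paper's \cref{lem:cylinder partition 01}, \cref{lem:true partition 01}, and the proof of \cref{thm:main UB formal} almost step for step, including your anticipation that the Markov argument must be invoked twice and that the final graph-regularity step must simultaneously regularize all bipartite pieces via a common refinement. The only point where your sketch is slightly imprecise is the mechanism for ensuring $\psi(\delta(G))$-quasirandomness in Stage~4: the issue is not that densities shrink under refinement, but rather that one must regularize to a fixed level $\alpha=\psi((\eta/(9L))^3)$ (where $L$ bounds the number of edge parts), then discard the sparse edge parts so that the surviving graphs satisfy $\delta(G)\geq(\eta/(9L))^3$, whence monotonicity of $\psi$ gives the desired conclusion.
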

As indicated in the introduction, the assumption that $\h$ has bounded VC$_2$ dimension allows us to strengthen \cref{thm:gowers regularity} in two ways. First, and most importantly, the bound on the number of parts is improved from wowzer-type to tower-type. Secondly, as in \cref{thm:FPS}, we obtain a stronger conclusion than most triples of vertices lying in $(\eta,\psi)$-quasirandom chains: most triples now lie in chains whose graph is $\psi(\delta(G))$-quasirandom, but where the relative density of the hypergraph is very close to $0$ or $1$. As in the graph case, it is straightforward to show that this is indeed a strictly stronger property than quasirandomness; for a proof, see e.g.\ \cite[Proposition 2.24]{MR4662634}.

\subsection{Cylinder regularity for hypergraphs}\label{sec:cylinder defs}
The goal of this section is to state our cylinder regularity lemma for $3$-graphs, which is the key new tool we develop in this paper towards the proof of \cref{thm:main UB informal}.
In order to do so, we first need to introduce some further definitions and notation.

\begin{definition}[Cylinders and partitions]\label{def:cylinder}
	Let $X_1,\dots, X_t$ be finite sets, which we think of as the vertex parts of some $t$-partite (hyper)graph. A \emph{vertex subcylinder} is a product set $Y \coloneqq Y_1 \times \dotsb\times Y_t$, where $Y_i \subseteq X_i$ for all $i$. If $Y$ is a vertex subcylinder, we also denote by $Y(X_i)$ the set $Y_i$.

	A \emph{vertex cylinder partition} is a collection $\p_V = \{Y^1,\dots,Y^m\}$, where each $Y^j = Y^j(X_1) \times \dotsb \times Y^j(X_t)$ is a vertex subcylinder, such that the subcylinders $Y^j$ partition the vertex cylinder $X_1 \times \dotsb \times X_t$. In other words, for every $(x_1,\dots,x_t) \in X_1 \times \dotsb \times X_t$, there is a unique $j$ such that $(x_1,\dots,x_t) \in Y^j$. We denote this unique part by $\p_V[x_1,\dots,x_t]$. We denote by $\ab{\p_V}$ the number of parts in $\p_V$.

	An \emph{edge partition}\footnote{We stress that the partition of the edges is not ``cylindrical'': this notion is identical to that of the edge partition appearing in \cref{def:chain partitions}.} $\p_E(Y)$ of a vertex cylinder $Y=Y_1\dtimes Y_t$ consists of, for all $1 \leq i<j \leq t$, a partition $\p_E(Y_i \times Y_j)$ of the complete bipartite graph $Y_i \times Y_j$ into subgraphs $Z_{ij}^1,\dots,Z_{ij}^\ell$. Note that if we select one such bipartite graph for each $1 \leq i<j\leq t$, we obtain a $t$-partite graph; in particular, given $\p_E(Y)$,
	each $t$-tuple $(x_1,\dots,x_t) \in Y_1\dtimes Y_t$ defines a unique such $t$-partite graph, namely the one containing all edges $x_ix_j$. We denote this $t$-partite graph by $\p_E[x_1,\dots,x_t]$. 
	We also define $\ab{\p_E(Y)}\coloneqq \max_{i<j}\ab{\p_E(Y_i\times Y_j)}$ to be the maximum number of bipartite graphs we use in any of our $\binom t2$ edge partitions. 

	Finally, a \emph{cylinder chain partition} is a pair $\p=(\p_V, \p_E)$, where $\p_V$ is a vertex cylinder partition, and $\p_E = \{\p_E(Y): Y \in \p_V\}$ is a set of edge partitions, one for each part of $\p_V$. We denote by $\ab{\p_E}$ the maximum of $\ab{\p_E(Y)}$ over all $Y \in \p_V$. 
\end{definition}

\begin{definition}[Subhypergraphs on chains]
	Let $\h$ be a $t$-partite $3$-graph on vertex parts $X_1,\dots,X_t$. For a vertex subcylinder $Y = Y_1\dtimes Y_t$, we denote by $\h[Y]$ the induced subhypergraph on vertex set $Y_1 \cup \dotsb \cup Y_t$. For a $t$-partite graph $Z$ on $Y_1\cup \dots \cup Y_t$, we further denote by $\h[Y;Z]$ the subhypergraph of $\h[Y]$ comprising all edges supported on $\Delta(Z)$.

	Note that $\h[Y;Z]$ can itself be viewed as a union of $\binom t3$ chains. Namely, for every $\{i,j,k\} \in \binom{[t]}3$, we have the chain with vertex sets $Y_i, Y_j, Y_k$, edge sets $Z_{ij}, Z_{ik}, Z_{jk}$, and the hyperedges of $\h$ which are supported on $\Delta(Z_{ij} \cup Z_{ik} \cup Z_{jk})$. We denote this chain by $\h[Y;Z;\{i,j,k\}]$.
\end{definition}

\noindent We now extend the quasirandomness notions discussed above to cylinders and cylinder partitions.
\begin{definition}[Quasirandomness of cylinders]
	Let $Y=Y_1\times \dotsb \times Y_t$ be a vertex cylinder.
	We say that an edge partition $\p_E(Y)$ is $\alpha$-quasirandom if, whenever $(x_1,\dots,x_t) \in Y_1\dtimes Y_t$ is picked uniformly at random, then the $t$-partite graph $\p_E[x_1,\dots,x_t]$ is $\alpha$-quasirandom with probability at least $1-\alpha/2$.

	Let $\h$ be a $t$-partite $3$-graph on vertex parts $X_1,\dots,X_t$.
	Given a cylinder chain partition $\p=(\p_V,\p_E)$, and given $\eta>0$ and $\psi:(0,1) \to (0,1)$, we say that $\h$ is $(\eta,\psi)$-quasirandom relative to $\p$ if the following holds. When $(x_1,\dots,x_t) \in X_1 \dtimes X_t$ is picked uniformly at random, with probability at least $1-\eta$ we have that $\h[\p_V[x_1,\dots,x_t]; \p_E[x_1,\dots,x_t]]$ is $(\eta,\psi)$-quasirandom, in the sense of \cref{def:product density}.
\end{definition}

We are now ready to state our cylinder regularity lemma for $3$-graphs, which will be the main technical tool used in the proof of \cref{thm:main UB formal}.
In the statement, we will use the following simple extension of the tower function used above, where we denote by $\twr(\tau,x)$ a tower of twos of height $\tau$ with an $x$ as the top exponent. Formally, we recursively define this function by $\twr(0,x)=x$ and $\twr(\tau+1,x)=2^{\twr(\tau,x)}$. Note that $\twr(\tau,1)=\twr(\tau)$, that is, this agrees with the previous definition upon setting $x=1$.
\begin{theorem}\label{thm:hyper cylinder}
	Fix an integer $t$, a parameter $\eta>0$, and an increasing polynomial function $\psi:(0,1) \to (0,1)$ satisfying $\psi(x) \leq 2^{-100}x^{28}$. Let $\h$ be a $t$-partite $3$-graph. Then there exists a cylinder chain partition $\p = (\p_V, \p_E)$, such that $\h$ is $(\eta, \psi)$-quasirandom relative to $\p$, and such that
	\[
		\ab{\p_V}  \leq \twr\left(\frac{10^5 t^6}{\eta^3},\psi\left(\frac\eta{t^2}\right)^{-20}\right)
	\]
	and
	\[
		\ab{\p_E} \leq \twr\left(\frac{10^5 t^6}{\eta^3},\left(\frac{t^2}\eta\right)^{10}\right).
	\]
\end{theorem}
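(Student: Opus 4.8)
The plan is to prove \cref{thm:hyper cylinder} by an energy-increment argument, analogous to the proof of the Duke--Lefmann--R\"odl cylinder regularity lemma for graphs, but iterated in a two-layer fashion so that we control both the edge partition (the underlying graphs) and the hypergraph relative to that partition. I would maintain a cylinder chain partition $\p=(\p_V,\p_E)$ together with a ``mean-square density'' potential, defined roughly as the expectation, over a random $t$-tuple $(x_1,\dots,x_t)$, of $\sum_{\{i,j,k\}} d(\h[\p_V;x_1,\dots,x_t]\mid \p_E[x_1,\dots,x_t])^2$ plus an analogous term summing $d_{\p_E[\cdots]}(X_i',X_j')^2$ over the pairs, all suitably normalized to lie in $[0,O(t^3)]$. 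If $\h$ fails to be $(\eta,\psi)$-quasirandom relative to $\p$, then by definition at least an $\eta$-fraction of $t$-tuples lie in a chain that is not $(\eta,\psi)$-quasirandom; by \cref{def:product density} this means either the underlying $t$-partite graph fails to be $\psi(\delta)$-quasirandom, or $\h$ fails to be $\eta$-quasirandom relative to it. In the first case I refine the relevant bipartite graph $\p_E(X_i'\times X_j')$ using a defect-Cauchy--Schwarz / witness-set argument (exactly as in graph regularity: a $C_4$-type count deviation yields a small family of subsets whose common refinement boosts the mean-square density of the bipartite parts by a polynomial-in-$\psi(\delta)$ amount). In the second case I do the same one level up: a deviation in the octahedron count (the eight-fold sum in \cref{def:quasirandom chain}) exhibits witness sets in $X_i\times X_j$, and refining the edge partition by these increases the hypergraph-layer potential. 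Since both potentials are bounded, the process terminates, and the number of iterations is polynomial in $1/\eta$ and in $1/\psi(\eta/t^2)$.

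The key point that keeps the bound at tower type rather than wowzer type is the same idea sketched for graphs in \cref{subsec:hausslerfree}: each refinement step introduces only a \emph{bounded} number of new witness sets per pair, so it multiplies the number of cylinder parts by a constant (depending on $t$, $\eta$, $\psi$), rather than applying Szemer\'edi's lemma and multiplying by a tower. Concretely, starting from a trivial partition and running for $N = \poly(t/\eta)\cdot\poly(1/\psi(\eta/t^2))$ steps, each multiplying $\ab{\p_V}$ and $\ab{\p_E}$ by at most some constant $c$, gives bounds of the shape $c^N$. To get the stated nested-tower bound one has to be slightly more careful: the witness sets for the hypergraph layer are themselves bipartite \emph{graphs} on $X_i\times X_j$, and to make the edge partition into genuine product-structured bipartite pieces whose underlying graphs can again be regularized, one iterates the \emph{graph} cylinder regularity lemma at each step (as the last footnote in \cref{sec:our approach sketch} indicates). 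Each such inner application costs an exponential, and there are $\poly(t/\eta)/\psi$-many outer steps, which composes to $\twr(\poly(t^6/\eta^3), \psi(\eta/t^2)^{-20})$ after bookkeeping the exponents; the edge-partition bound $\twr(\poly(t^6/\eta^3),(t^2/\eta)^{10})$ comes out of the same recursion since the number of edge parts created per step depends only polynomially on $t/\eta$ and not on $\psi$.

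The steps, in order: (1) set up the correct potential function and verify it is bounded in $[0,O(t^3)]$ and monotone under refinement; (2) prove the single-step increment lemma for the graph layer --- if a $t$-partite graph arising in the partition is not $\psi(\delta)$-quasirandom on an $\eta$-fraction of tuples, then refining via $O(1)$ witness sets per pair increases the bipartite-layer potential by $\Omega(\poly(\psi(\eta/t^2)))$ --- this is where one imports the graph cylinder regularity lemma and the defect Cauchy--Schwarz inequality; (3) prove the analogous single-step increment lemma for the hypergraph layer, using the octahedron count in \cref{def:quasirandom chain} in place of the $C_4$ count, and here one must be careful that the $\psi(x)\le 2^{-100}x^{28}$ hypothesis is exactly what is needed to absorb the $[d_G(X,Y)d_G(X,Z)d_G(Y,Z)]^4$ normalization factor when converting a relative-density deviation into an absolute witness-set gain; (4) combine: run the increment until neither failure mode occurs, bound the number of steps by the total potential divided by the per-step gain, and track how $\ab{\p_V},\ab{\p_E}$ grow; (5) conclude by plugging in the explicit constants.

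The main obstacle I anticipate is step (3), the hypergraph-layer increment lemma, and specifically the interaction between the two layers. When I refine the edge partition to fix a non-quasirandom chain, the new bipartite graphs $Z_{ij}$ need not themselves be quasirandom, so the underlying-graph quasirandomness required for the next octahedron-counting argument is destroyed; repairing it via the graph cylinder regularity lemma further refines the edge partition, which can in turn un-fix chains I had already dealt with. Managing this requires the right ordering of operations --- I would regularize the graph layer fully (to $\psi$-quasirandomness) \emph{before} examining the hypergraph layer at each stage, and argue that a graph refinement can only \emph{increase} the hypergraph-layer mean-square density, never decrease it, so that the global potential is still monotone. Getting the quantitative dependence clean enough to land exactly on $\twr(10^5 t^6/\eta^3,\psi(\eta/t^2)^{-20})$, rather than some messier iterated composition, will require care in choosing the auxiliary parameters (how quasirandom to demand the graph layer be, as a function of $\eta,t$, and $\psi$) so that the inner exponential losses telescope cleanly into the outer tower.
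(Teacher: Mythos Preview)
Your overall strategy is correct and matches the paper's: iterate a hypergraph-layer energy increment (the paper uses Gowers's \cite[Lemma 8.4]{MR2195580}, which is exactly your octahedron-witness step (3), and for which the hypothesis $\psi(x)\le 2^{-100}x^{28}$ is indeed what absorbs the $[d_G]^4$ normalization), and between increments re-regularize the graph layer via the graph cylinder regularity lemma, not Szemer\'edi's lemma, so each step costs only an exponential. Your final paragraph identifies the right mechanism: graph-layer refinement is a refinement of the cylinder chain partition, so it can only increase the hypergraph-layer potential, and therefore the iteration is globally monotone.

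Two points in your writeup are off, though, and would derail the bookkeeping if not fixed. First, the paper does \emph{not} use a combined two-layer potential; it uses only the hypergraph-layer mean-squared density $q(\p)$ (see \cref{def:mean-squared}), and treats the graph-layer regularization as a black-box subroutine (\cref{lem:graph cylinder}) applied once per outer step. A combined potential runs into the difficulty that the per-step graph-layer gain is $\poly(\psi(\delta))$ with $\delta$ shrinking as the partition refines, so the number of graph-layer steps is not a priori bounded in terms of the initial parameters. Second, and relatedly, your count of ``$\poly(t/\eta)/\psi$-many outer steps'' and ``number of iterations polynomial in $1/\eta$ and in $1/\psi(\eta/t^2)$'' is wrong: the number of \emph{outer} (hypergraph-layer) iterations is $O(t^6/\eta^3)$, independent of $\psi$, because each outer step increments $q(\p)$ by $\Omega(\eta^3/t^3)$ and $q(\p)\le\binom t3$. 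The $\psi$-dependence enters only through the cost of re-regularizing the graph layer at each step (the $\alpha$ in \cref{lem:graph cylinder} is $\psi$ of a quantity depending on the current $|\p_E|$), which is why the tower has height $\Theta(t^6/\eta^3)$ and top exponent $\psi(\eta/t^2)^{-20}$ rather than the other way around.
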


\section{Cylinder regularity}\label{sec:cylinder}
Our goal in this section is to prove \cref{thm:hyper cylinder}. In the first subsection below we recall some basic facts and tools. We then use these tools to prove \cref{thm:hyper cylinder} in \cref{subsec:cylinder proof}. We stress that \cref{thm:hyper cylinder} holds for all 3-graphs, and in particular, in this section we do not assume that the 3-graphs we are dealing with have bounded VC$_2$ dimension.
\subsection{Mean-squared density}
In this subsection, we record various facts about the mean-squared density of a cylinder chain partition, which we now define.

First, we extend our notion of edge partition (from \cref{def:cylinder}) as follows. Let $G$ be a tripartite graph with parts $Y_1,Y_2,Y_3$, comprising three bipartite graphs $G_{12}\subseteq Y_1 \times Y_2, G_{13} \subseteq Y_1 \times Y_3, G_{23} \subseteq Y_2 \times Y_3$. An \emph{edge partition} $\p_E(G)$ consists of a partition of each of the three bipartite graphs $G_{12},G_{13},G_{23}$ into subgraphs. We denote by $\p_E(G;Y_1 \times Y_2)$ the partition of $G_{12}$, and similarly for the other two pairs of parts. As before, $\ab{\p_E(G)}\coloneqq\max\{\ab{\p_E(G;Y_1 \times Y_2)},\ab{\p_E(G;Y_1 \times Y_3)},\ab{\p_E(G;Y_2 \times Y_3)}\}$ denotes the maximum number of parts partitioning the bipartite graphs. Note that all of these notions extend our previous notion
of an edge partition $\p_E(Y)$ of a tripartite cylinder; the previous case corresponds to the case when $G$ is the complete tripartite graph on $Y_1 \cup Y_2 \cup Y_3$. As we will be dealing with such complete tripartite graphs repeatedly in what follows, we also establish the following notation: $K(Y_1,Y_2,Y_3)$ denotes the complete tripartite graph on $Y_1 \cup Y_2 \cup Y_3$.

We recall that if $(G,\h)$ is a tripartite chain, we denote by
\[
	d(\h \mid G) = \frac{\ab{E(\h)}}{\ab{\Delta(G)}}
\]
the relative density of $\h$ on $G$, with the convention that $0/0=0$. In case $G'$ is a tripartite subgraph of $G$, we similarly define
\[
	d(\h \mid G') \coloneqq \frac{\ab{E(\h)\cap \Delta(G')}}{\ab{\Delta(G')}}
\]
to be the relative density of the chain $(G',\h')$, where $\h'$ is the subhypergraph of $\h$ supported on $\Delta(G')$.

\begin{definition}[Mean-squared density\footnote{We remark that there are several different reasonable definitions of the mean-squared density; for example, rather than weighting the summands in \eqref{eq:mean-squared} by $\ab{\Delta(Z_{12} \cup Z_{13} \cup Z_{23})}$, we could weight by its approximation $\delta(Z_{12} \cup Z_{13}\cup Z_{23}) \ab{Z_{12}} \ab{Z_{13}} \ab{Z_{23}}$ (recall \cref{def:product density}). The definition we use is a generalization to cylinder partitions of the one used by Gowers \cite{MR2195580}, and is more technically convenient for our purposes.}]\label{def:mean-squared}
	Let $(G,\h)$ be a tripartite chain with vertex parts $Y_i \cup Y_j \cup Y_k$, and let $\p_E(G)$ be an edge partition of $G$. The \emph{mean-squared density} of $\p_E(G)$ is defined to be
	\begin{equation}\label{eq:mean-squared}
		q(\p_E(G)) \coloneqq \sum_{\substack{Z_{ij} \in \p_E(G;Y_i \times Y_j)\\Z_{ik} \in \p_E(G;Y_i \times Y_k)\\Z_{jk} \in \p_E(G;Y_j \times Y_k)}} \frac{\ab{\Delta(Z_{ij}\cup Z_{ik}\cup Z_{jk})}}{\ab{\Delta(G)}} d(\h \mid Z_{ij}\cup Z_{ik}\cup Z_{jk})^2.
	\end{equation}
	In words, we sum over all tripartite subgraphs $G'$ of $G$ obtained by picking one element of each of the partitions of the three bipartite subgraphs. For each such $G'$, we square the relative density of $\h$, and then weight this summand by the fraction of triangles of $G$ that appear in $G'$. Note that if $\p_E(G)$ is the trivial edge partition, then $q(\p_E(G))=d(\h \mid G)^2$.

	Let $\h$ now be a $t$-partite $3$-graph with vertex set $X_1\cup \dotsb\cup X_t$. Suppose $Y = Y_1 \dtimes Y_t$ is a vertex cylinder and $\p_E(Y)$ is an edge partition as in \cref{def:cylinder}. The mean-squared density of the cylinder $Y$ is then defined to be
	\begin{equation}\label{eq:q vertex cylinder}
		q(\p_E(Y)) \coloneqq \sum_{1 \leq i<j<k\leq t} q(\p_E(K(Y_i,Y_j,Y_k))),
	\end{equation}
	where we recall that $K(Y_i,Y_j,Y_k)$ is the complete tripartite graph on $Y_i \cup Y_j \cup Y_k$.
	Finally, given a cylinder chain partition $\p=(\p_V,\p_E)$ of $X_1\dtimes X_t$, we define its mean-squared density to be
	\[
		q(\p) \coloneqq \sum_{Y \in \p_V} \left( \prod_{i=1}^t \frac{\ab{Y_i}}{\ab{X_i}} \right) q(\p_E(Y)).
	\]
\end{definition}

The basic observations we need about the mean-squared density are standard, namely that it is bounded and monotonic with respect to refinement. To formally state these properties, let us say that a vertex cylinder partition $\p_V'$ \emph{refines} another vertex cylinder partition $\p_V$ if, for every $Y' \in \p_V'$, there is a unique $Y \in \p_V$ containing $Y'$. That is, if $Y' = Y_1'\dtimes Y_t'$ and $Y = Y_1 \dtimes Y_t$, then $Y_i' \subseteq Y_i$ for all $i$. Similarly, an edge partition $\p_E'(G)$ \emph{refines} another edge partition $\p_E(G)$ of the same $t$-partite graph if for all pairs of parts $Y_i,Y_j$, each part of $\p_E'(G;Y_i \times Y_j)$ lies in a unique part of $\p_E(G;Y_i \times Y_j)$. For two edge partitions $\p_E(Y),\p_E'(Y)$ of a vertex cylinder, we say that $\p_E'(Y)$ refines $\p_E(Y)$ if it refines it in the sense above, where we view it as an edge partition of the complete $t$-partite graph. Finally, let us say that one cylinder chain partition $\p'=(\p_V',\p_E')$ \emph{refines} another cylinder chain partition $\p=(\p_V,\p_E)$ if $\p_V'$ refines $\p_V$ in the sense above, and if $\p_E'$ refines $\p_E$ in the following sense. If $Y' \in \p_V'$, let $Y$ be the part of $\p_V$ containing $Y'$. For all $i<j$, the partition $\p_E(Y_i \times Y_j)$ naturally restricts to a partition $\p_E(Y_i \times Y_j)|_{Y_i' \times Y_j'}$ of the complete bipartite graph $Y_i' \times Y_j'$. We then say that
$\p_E'$ refines $\p_E$ if, for all $Y' \in \p_V'$ and for all $i<j$, the partition $\p_E'(Y_i' \times Y_j')$ refines $\p_E(Y_i \times Y_j)|_{Y_i' \times Y_j'}$.

We may now state the properties we will use of the mean-squared density.
\begin{lemma}\label{lem:q properties}
	Let $(G,\h)$ be a tripartite chain. 
	\begin{lemenum}
	
	\item 
	If $\p_E(G)$ is an edge partition of $G$, then $0\leq q(\p_E(G))\leq 1$. \label{lemit:G01}
	
	\item  If $\p_E'(G)$ is a refinement of $\p_E(G)$, then $q(\p_E'(G))\geq q(\p_E(G))$.\label{lemit:G refinement}
	\end{lemenum}

	Similarly, let $\h$ be a $t$-partite $3$-graph. 
	\begin{lemenum}[start=3]
		
		\item 
		If $\p=(\p_V,\p_E)$ is a cylinder chain partition, then $0 \leq q(\p) \leq \binom t3$. \label{lemit:P0t3}
		
		\item If $\p'$ is a refinement of $\p$, then $q(\p') \geq q(\p)$.\label{lemit:P refinement}
	\end{lemenum}
\end{lemma}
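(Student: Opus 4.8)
The plan is to prove the four parts of \cref{lem:q properties} in sequence, with the tripartite statements (a), (b) coming first and the $t$-partite statements (c), (d) following as essentially formal consequences. Throughout, the key computational device is the Cauchy--Schwarz inequality applied to the averaging identity that expresses $d(\h\mid G)$ as a convex combination of the relative densities $d(\h\mid Z_{ij}\cup Z_{ik}\cup Z_{jk})$ over parts of the edge partition, with weights $\ab{\Delta(Z_{ij}\cup Z_{ik}\cup Z_{jk})}/\ab{\Delta(G)}$. Note that these weights sum to $1$ (since the triangles of $G$ are partitioned by the choices of one part from each of the three bipartite partitions, up to the triangles lying entirely within a pair of parts — but in a tripartite graph every triangle uses all three parts), which is the fact underlying everything.

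For part (a): the lower bound $q(\p_E(G))\geq 0$ is immediate since every summand is a nonnegative weight times a square. For the upper bound, each relative density $d(\h\mid Z_{ij}\cup Z_{ik}\cup Z_{jk})$ lies in $[0,1]$, hence so does its square, so $q(\p_E(G))\leq \sum \ab{\Delta(Z_{ij}\cup Z_{ik}\cup Z_{jk})}/\ab{\Delta(G)}=1$. For part (b), the monotonicity under refinement: if $\p_E'(G)$ refines $\p_E(G)$, then within each part $G'=Z_{ij}\cup Z_{ik}\cup Z_{jk}$ of $\p_E(G)$, the refinement induces an edge partition of $G'$, and by the same averaging identity applied to $(G',\h')$ together with Cauchy--Schwarz (or equivalently Jensen applied to the convex function $x\mapsto x^2$), the weighted sum of squared relative densities over the sub-parts is at least $d(\h\mid G')^2$. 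Summing this inequality over all parts $G'$ of $\p_E(G)$, weighted by $\ab{\Delta(G')}/\ab{\Delta(G)}$, gives $q(\p_E'(G))\geq q(\p_E(G))$. I expect the main (though still routine) obstacle here to be bookkeeping the weights correctly when combining the two levels of the refinement — one must verify that the weight of a fine part $G''$ relative to $G$ is the product of its weight relative to $G'$ and the weight of $G'$ relative to $G$, which holds because $\ab{\Delta(G'')}/\ab{\Delta(G)}=(\ab{\Delta(G'')}/\ab{\Delta(G')})\cdot(\ab{\Delta(G')}/\ab{\Delta(G)})$.

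For parts (c) and (d), these follow by reducing to the tripartite case. For (c): by \eqref{eq:q vertex cylinder} and the definition of $q(\p)$, we have $q(\p)=\sum_{Y\in\p_V}(\prod_i \ab{Y_i}/\ab{X_i})\sum_{i<j<k}q(\p_E(K(Y_i,Y_j,Y_k)))$. Fixing a triple $\{i,j,k\}$, the inner sum over $Y\in\p_V$ of $(\prod_\ell \ab{Y_\ell}/\ab{X_\ell})\, q(\p_E(K(Y_i,Y_j,Y_k)))$ is at most $\sum_{Y\in\p_V}\prod_\ell \ab{Y_\ell}/\ab{X_\ell}=1$ (since the $Y$'s partition $X_1\dtimes X_t$ and $q(\p_E(K(Y_i,Y_j,Y_k)))\leq 1$ by part (a)), and it is clearly nonnegative; summing over the $\binom t3$ triples gives $0\leq q(\p)\leq\binom t3$. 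For (d): if $\p'$ refines $\p$, then for each fine vertex part $Y'\in\p_V'$ lying in $Y\in\p_V$, and each triple $\{i,j,k\}$, the edge partition $\p_E'$ restricted to $K(Y_i',Y_j',Y_k')$ refines $\p_E(K(Y_i,Y_j,Y_k))$ restricted to $K(Y_i',Y_j',Y_k')$; applying part (b) and then summing over the fine vertex parts contained in $Y$ with their weights (using that these weights relative to $Y$ sum to $1$, and multiplying through by the weight of $Y$) gives the claimed monotonicity after summing over $Y\in\p_V$ and over triples $\{i,j,k\}$. The one subtlety in (d) is handling the restriction of an edge partition to a subcylinder, but this is exactly the notion of refinement of cylinder chain partitions defined just before the lemma statement, so no new work is needed beyond invoking part (b) at the tripartite level.
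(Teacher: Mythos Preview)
Your arguments for parts (a), (b), and (c) are correct and essentially identical to the paper's. The gap is in part (d).

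Your sketch for (d) applies part (b) on each fine vertex cylinder $Y'$ to obtain
\[
q\bigl(\p_E'(K(Y_i',Y_j',Y_k'))\bigr)\ \geq\ q\bigl(\p_E|_{Y'}(K(Y_i',Y_j',Y_k'))\bigr),
\]
and then sums with the cylinder weights. This correctly shows $q(\p')\geq q(\wh\p)$, where $\wh\p$ is the intermediate cylinder chain partition with vertex partition $\p_V'$ and edge partitions obtained by restricting $\p_E$. But you never establish $q(\wh\p)\geq q(\p)$, and your remark that ``no new work is needed beyond invoking part (b)'' is not right: part (b) compares two edge partitions of the \emph{same} tripartite graph, whereas passing from $K(Y_i,Y_j,Y_k)$ to $K(Y_i',Y_j',Y_k')$ changes the underlying graph. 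The fact that the weights sum to $1$ is not enough---you need a second convexity argument.

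The paper makes this step explicit. The crucial identity is that for fixed edge parts $Z_{12},Z_{13},Z_{23}$ in $\p_E(Y)$, one has
\[
\ab{\Delta(Z_{12}\cup Z_{13}\cup Z_{23})}\cdot\prod_{\ell\geq 4}\ab{Y_\ell}
=\sum_{r}\ab{\Delta(Z_{12}^r\cup Z_{13}^r\cup Z_{23}^r)}\cdot\prod_{\ell\geq 4}\ab{Y_\ell^r},
\]
since both sides count $t$-tuples in $Y$ whose first three coordinates form a triangle in $Z_{12}\cup Z_{13}\cup Z_{23}$ (and the subcylinders $Y^r$ partition $Y_1\dtimes Y_t$, not $Y_1\times Y_2\times Y_3$). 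This expresses $d(\h\mid Z_{12}\cup Z_{13}\cup Z_{23})$ as a convex combination of the restricted densities with weights involving \emph{all} $t$ coordinate factors, after which Cauchy--Schwarz gives the required inequality. The appearance of the coordinates $\ell\geq 4$ is precisely where the general-$t$ case differs from $t=3$, and is what your sketch does not address.
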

\cref{lem:q properties} is proved by several elementary but tedious applications of the Cauchy--Schwarz inequality, and we thus defer the proof to \cref{appendix:mean-squared}.

As in the majority of proofs of regularity lemmas, our proof of \cref{thm:hyper cylinder} will proceed by an energy increment approach. Concretely, given a cylinder chain partition of $\h$, we will either find that it is sufficiently quasirandom, or else find a refinement of it that increases the mean-squared density by a constant amount (independent of $\h$ or of the given partition). Each single step of this iteration will be obtained by the following lemma of Gowers \cite{MR2195580}.
It states that if a tripartite chain is not $\eta$-quasirandom, then we can find an edge partition of it for which the mean-squared density increases appreciably, while the number of parts in the new edge partition is not too large.

\begin{lemma}[{\cite[Lemma 8.4]{MR2195580}}]\label{lem:one cylinder}
	Consider a tripartite graph $G$ with vertex sets $Y_1,Y_2,Y_3$ and edge sets $Z_{12},Z_{13},Z_{23}$. Let $\delta=\delta(G)$. Suppose that each of the three bipartite graphs is $\alpha$-quasirandom, where $\alpha \leq 2^{-100}\delta^{28}$.

	Let $(G,\h)$ be a tripartite chain, and suppose that $\h$ is not $\eta$-quasirandom relative to $G$. Let $d=d(\h \mid G)$ be the relative density of $\h$. Then there is an edge partition $\p_E(G)$ such that $q(\p_E(G)) \geq d^2 + 2^{-10}\eta^2$. Moreover, $\ab{\p_E(G)} \leq 3^{\delta^{-4}}$.
\end{lemma}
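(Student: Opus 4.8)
\emph{Step 1: reduction to a correlation statement.}
Write $d\coloneqq d(\h\mid G)$, $\delta\coloneqq\delta(G)$, and let $f\colon Y_1\times Y_2\times Y_3\to[-1,1]$ be $f(x,y,z)\coloneqq(\1_{xyz\in E(\h)}-d)\,\1_{xyz\in\Delta(G)}$; denote by $\norm{f}_\square^8$ the $8$-fold sum appearing in \cref{def:quasirandom chain}, so that the hypothesis that $\h$ is not $\eta$-quasirandom relative to $G$ reads $\norm{f}_\square^8>\eta\,\delta^4\,\ab{Y_1}^2\ab{Y_2}^2\ab{Y_3}^2$. Let $\mu$ be the uniform probability measure on $\Delta(G)$, and write $\langle g,h\rangle_\mu\coloneqq\E_\mu[gh]$. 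For any edge partition $\p_E(G)$ with tripartite pieces $Z$ (each obtained by choosing one part from each of the three bipartite partitions), the sets $\Delta(Z)$ partition $\Delta(G)$, and since $E(\h)\subseteq\Delta(G)$ we have $\sum_Z\ab{\Delta(Z)}\,d(\h\mid Z)=\ab{E(\h)}=d\,\ab{\Delta(G)}$. Expanding the square in \cref{def:mean-squared},
\[
  q(\p_E(G))-d^2=\sum_Z\frac{\ab{\Delta(Z)}}{\ab{\Delta(G)}}\bigl(d(\h\mid Z)-d\bigr)^2=\norm{\E_\mu[f\mid\mathcal{B}]}_{L^2(\mu)}^2,
\]
where $\mathcal{B}$ is the $\sigma$-algebra generated by $\p_E(G)$. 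Hence, for any $\mathcal{B}$-measurable $F$ with $\norm{F}_\infty\leq1$, using that $F$ is $\mathcal{B}$-measurable and Cauchy--Schwarz, $\langle f,F\rangle_\mu=\langle\E_\mu[f\mid\mathcal{B}],F\rangle_\mu\leq\bigl(q(\p_E(G))-d^2\bigr)^{1/2}$. So it suffices to exhibit an edge partition $\p_E(G)$ with $\ab{\p_E(G)}\leq3^{\delta^{-4}}$ together with a $\mathcal{B}$-measurable $F$, $\norm{F}_\infty\leq1$, such that $\langle f,F\rangle_\mu\geq2^{-5}\eta$.

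\emph{Step 2: the inverse step.}
We produce $F$ as a product $F=u\otimes v\otimes w$, where $(u\otimes v\otimes w)(x,y,z)=u(x,y)v(x,z)w(y,z)$ for functions $u\colon Y_1\times Y_2\to[-1,1]$, $v\colon Y_1\times Y_3\to[-1,1]$, $w\colon Y_2\times Y_3\to[-1,1]$. The plan is the standard Gowers--Cauchy--Schwarz unfolding: expand $\norm{f}_\square^8$ and apply the Cauchy--Schwarz inequality repeatedly so as to trade the triple-dependence for pair-dependence, deducing from the largeness of $\norm{f}_\square^8$ that $f$ correlates with such a product, $\langle f,u\otimes v\otimes w\rangle_\mu\geq2^{-5}\eta$ (replacing $u$ by $-u$ if needed to fix the sign). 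Crucially, the functions produced this way are \emph{simple}: each of $u,v,w$ is constant on the parts of a partition of the corresponding bipartite graph into at most $3^{\delta^{-4}}$ parts, these parts being common refinements of only $O(\delta^{-4})$ many $\{0,1\}$-valued ``slice'' indicators read off from $\h$ and $G$ (possibly after first rounding bounded functions to a $\poly(\eta)$-fine grid). The quasirandomness hypothesis $\alpha\leq2^{-100}\delta^{28}$ on the three bipartite graphs is exactly what makes this work: it guarantees, via triangle- and octahedron-counting in quasirandom tripartite graphs, that $\ab{\Delta(G)}$ and all the sub-configuration counts appearing in the unfolding are within a small (in $\delta$ and $\eta$) multiplicative error of their expected values, so that each Cauchy--Schwarz step loses only a controlled polynomial factor and the $\delta^4$-normalized octahedral norm converts into a $\delta$-free correlation of size $\Omega(\eta)$.

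\emph{Step 3: conclusion.}
Given $u,v,w$ as in Step 2, let $\p_E(G;Y_i\times Y_j)$ be the partition of the bipartite graph $Y_i\times Y_j$ into the level sets of the corresponding function; this is an edge partition refining the trivial one with $\ab{\p_E(G)}\leq3^{\delta^{-4}}$, and $F\coloneqq u\otimes v\otimes w$ is constant on each tripartite piece $Z$, hence $\mathcal{B}$-measurable, with $\norm{F}_\infty\leq1$. Feeding this into Step 1, $q(\p_E(G))-d^2\geq\langle f,F\rangle_\mu^2\geq(2^{-5}\eta)^2=2^{-10}\eta^2$, i.e.\ $q(\p_E(G))\geq d^2+2^{-10}\eta^2$, together with the claimed bound on $\ab{\p_E(G)}$.

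\emph{Main obstacle.}
The heart of the matter is Step 2: converting the single scalar inequality ``$\norm{f}_\square^8$ is large'' into a genuinely low-complexity combinatorial witness. Three points demand care. First, the Cauchy--Schwarz unfolding must be organized so that what is extracted is a \emph{product} of pair-functions, not an arbitrary function of the triple. Second, those pair-functions must be finite-valued, with the number of values bounded purely in terms of $\delta$; this is why one must work with explicit indicator/codegree slices of $\h$ (or round bounded functions to a fixed grid) rather than arbitrary bounded functions, and is where the exponent $\delta^{-4}$ enters. Third, everything must be carried out in the relative ($\delta^4$-normalized) setting of \cref{def:quasirandom chain}, and it is precisely the hypothesis $\alpha\leq2^{-100}\delta^{28}$ that lets one pass between relative and absolute counts without super-polynomial losses, so that the final increment is the clean, $\delta$-free quantity $2^{-10}\eta^2$. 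By comparison, the variance identity, the two Cauchy--Schwarz applications bracketing the argument, and the bookkeeping of part counts are all routine.
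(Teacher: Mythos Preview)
This lemma is not proved in the paper; it is quoted from Gowers \cite[Lemma~8.4]{MR2195580} and used as a black box, so there is no in-paper proof to compare against. I assess your outline on its own merits.

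Steps~1 and~3 are correct and standard: the identity $q(\p_E(G))-d^2=\|\E_\mu[f\mid\mathcal B]\|_{L^2(\mu)}^2$ together with $\langle f,F\rangle_\mu=\langle\E_\mu[f\mid\mathcal B],F\rangle_\mu\le\|\E_\mu[f\mid\mathcal B]\|_{L^2(\mu)}$ for bounded $\mathcal B$-measurable $F$ reduces everything to Step~2, which you rightly flag as the crux. The broad plan there --- freeze vertices in the octahedral sum to extract pair-functions $u,v,w$ --- is indeed Gowers's.

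The gap is that Step~2 asserts the two quantitative targets without supplying the mechanism that delivers them \emph{simultaneously}. Freezing a single $(x',y',z')$ does give $u,v,w$ that are products of $O(1)$ slices of $f$, hence $3^{O(1)}$-valued with no rounding; but the correlation one obtains is only $\langle f,u\otimes v\otimes w\rangle_\mu\gtrsim\eta\,\delta^{O(1)}$, not $\gtrsim\eta$. Concretely, the hypothesis gives $\sum_{x,y,z} f\cdot uvw>\eta\delta^4\prod_i|Y_i|$, and dividing by $|\Delta(G)|\approx\delta\prod_i|Y_i|$ leaves a factor of $\delta^3$ (one can claw back a little via $\|uvw\|_{L^2(\mu)}\lesssim\delta$, but not all of it). The hypothesis $\alpha\le2^{-100}\delta^{28}$ controls \emph{error terms} in triangle and octahedron counts; it does not erase the $\delta$-power sitting in the \emph{main term}, contrary to what your final paragraph suggests. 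Separately, the parenthetical about ``rounding bounded functions to a $\poly(\eta)$-fine grid'' points the wrong way: any $\eta$-dependent discretisation makes $|\p_E(G)|$ depend on $\eta$, directly contradicting the $\eta$-free bound $3^{\delta^{-4}}$. What actually produces both the $\delta$-free increment $2^{-10}\eta^2$ and the $\eta$-free size $3^{\delta^{-4}}$ is not a single correlate-then-square step but an accumulation over $O(\delta^{-4})$ frozen-vertex choices (this is where the exponent comes from), together with an argument that the per-step gains add up; your sketch names the destination but does not supply this engine.
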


Using \cref{lem:one cylinder}, we are able to take an irregular cylinder chain partition and further partition each of the irregular edge parts in order to increase the mean-squared density. However, we cannot then directly iterate \cref{lem:one cylinder}, as the assumption of \cref{lem:one cylinder} requires that each bipartite graph is $\alpha$-quasirandom, and this condition may be destroyed by the previous partitioning step. To get around this, we will apply the cylinder regularity lemma for graphs \cite{MR1333857} at each step. We will actually use the following, slightly generalized, version of the cylinder regularity lemma for graphs, which allows us to simultaneously regularize a family of graphs on the same vertex set.
\begin{lemma}\label{lem:graph cylinder}
	Let $G_1,\dots,G_m$ be $t$-partite graphs on the same vertex set. For any $\alpha>0$, there exists a vertex cylinder partition $\p_V$ such that the following holds. If $(x_1,\dots,x_t)$ is chosen uniformly at random from $X_1\dtimes X_t$, then with probability at least $1-\alpha/2$, we have that for all $i \in [m]$, the subgraph of $G_i$ induced on the vertex set $\p_V[x_1,\dots,x_t]$ is $\alpha$-quasirandom. Moreover, $\ab{\p_V} \leq 2^{mt^2\alpha^{-20}}$.
\end{lemma}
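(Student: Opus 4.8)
The plan is to prove \cref{lem:graph cylinder} by the energy-increment strategy used to establish the ordinary cylinder regularity lemma for graphs \cite{MR1333857}, carried out simultaneously for all $m$ graphs. For a vertex cylinder partition $\p_V$ of $X_1\dtimes X_t$ I would define its \emph{energy}
\[
	q(\p_V) \coloneqq \sum_{Y\in\p_V}\Big(\prod_{i=1}^t\frac{\ab{Y_i}}{\ab{X_i}}\Big)\sum_{k=1}^m\sum_{1\leq i<j\leq t} d_{G_k}(Y_i,Y_j)^2,
\]
where $Y = Y_1\dtimes Y_t$. Since every density lies in $[0,1]$, we have $0\leq q(\p_V)\leq m\binom t2$. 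As usual, $q$ is monotone under refinement of cylinder partitions; this follows term by term from the Cauchy--Schwarz inequality, exactly as in the proof of \cref{lem:q properties}, so I would relegate this routine verification to \cref{appendix:mean-squared}.

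The key one-step input is the standard increment lemma for a single bipartite pair, which is the graph (rather than chain) analogue of \cref{lem:one cylinder}: there are absolute constants $c_0>0$ and $s_0\geq 1$ such that if a bipartite graph $H$ on $(A,B)$ is not $\alpha$-quasirandom in the sense of \cref{def:graph quasirandomness}, then there exist $A'\subseteq A$ and $B'\subseteq B$ for which the $2\times 2$ partition $\{A',A\setminus A'\}\times\{B',B\setminus B'\}$ raises the mean-squared density of $H$ across $(A,B)$ by at least $c_0\alpha^{s_0}$. This is proved by the usual defect form of the Cauchy--Schwarz inequality, converting failure of the $C_4$-count condition into a pair of subsets witnessing a density deviation; it is implicit in \cite{MR1333857,MR2195580}, and I would either cite it or include its short proof alongside the deferred computations.

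Now I iterate. Start with the trivial cylinder partition $\{X_1\dtimes X_t\}$ (we may assume $\alpha\leq 1$, as otherwise this partition already works since every graph is $1$-quasirandom). Given a cylinder partition $\p_V$, call a cylinder $Y\in\p_V$ \emph{irregular} if some $G_k$ restricted to $Y$ has a pair $(Y_i,Y_j)$ that is not $\alpha$-quasirandom. If the total measure $\sum_{Y \text{ irregular}}\prod_i \ab{Y_i}/\ab{X_i}$ of irregular cylinders is at most $\alpha/2$, then $\p_V$ already satisfies the conclusion: a uniformly random $(x_1,\dots,x_t)$ lands in a non-irregular cylinder with probability at least $1-\alpha/2$, and there all the graphs $G_i$ are $\alpha$-quasirandom. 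Otherwise, for each irregular $Y$ I pick one witnessing triple $(i,j,k)$, apply the one-step lemma to $G_k$ on the pair $(Y_i,Y_j)$ to obtain subsets $Y_i'\subseteq Y_i$ and $Y_j'\subseteq Y_j$, and refine $Y$ by splitting its $i$-th coordinate along $\{Y_i',Y_i\setminus Y_i'\}$ and its $j$-th coordinate along $\{Y_j',Y_j\setminus Y_j'\}$, leaving all other coordinates unchanged, so $Y$ is replaced by at most four subcylinders; non-irregular cylinders are left untouched. The resulting $\p_V'$ refines $\p_V$ with $\ab{\p_V'}\leq 4\ab{\p_V}$, and by monotonicity together with the one-step lemma, $q(\p_V')\geq q(\p_V)+c_0\alpha^{s_0}\cdot(\text{measure of irregular cylinders})> q(\p_V)+\tfrac{c_0}{2}\alpha^{s_0+1}$.

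Since the energy stays in $[0,m\binom t2]$ and increases by more than $\tfrac{c_0}{2}\alpha^{s_0+1}$ at each step, the process terminates after $O(mt^2/\alpha^{s_0+1})$ steps, and each step multiplies the number of cylinders by at most $4$; hence the final partition has at most $4^{O(mt^2/\alpha^{s_0+1})}=2^{O(mt^2/\alpha^{s_0+1})}$ parts, and choosing the exponent in the one-step lemma while absorbing the absolute constants (using $\alpha\leq 1$) yields the claimed bound $\ab{\p_V}\leq 2^{mt^2\alpha^{-20}}$. The main obstacle here is not the iteration but the one-step increment lemma: one must make the defect-Cauchy--Schwarz argument quantitative enough that $s_0$ is a small absolute constant, so that the final exponent fits comfortably under $20$ and the accumulated multiplicative constants are harmless. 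This is precisely the kind of bookkeeping carried out in \cite{MR1333857,MR2195580}, and I would follow those references for the details.
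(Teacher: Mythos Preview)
Your proposal is correct and follows the standard energy-increment proof of the cylinder regularity lemma for multiple graphs. The paper does not actually prove \cref{lem:graph cylinder}; it only remarks that the statement is well known and cites \cite{MR1333857}, \cite[Theorem 7.8]{MR2195580}, \cite[Lemma 3.7]{MR1884430}, and \cite[Theorem 7.2]{MR4170446} for proofs, so your sketch is precisely the argument those references carry out.
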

The only differences between \cref{lem:graph cylinder} and the cylinder regularity lemma proved by Duke, Lefmann, and R\"odl \cite{MR1333857} are that we are regularizing multiple graphs, and that we use a different notion of quasirandomness, leading to the different power of $\alpha$ (since these two notions of graph quasirandomness are polynomially related). It is well known, and used frequently in hypergraph regularity theory, that one can simultaneously regularize multiple graphs on the same vertex set; see e.g.\ \cite[Theorem 7.8]{MR2195580} or \cite[Lemma 3.7]{MR1884430}. For a proof of the cylinder regularity lemma for multiple graphs, see \cite[Theorem 7.2]{MR4170446}.

\subsection{Proof of Theorem \ref{thm:hyper cylinder}}\label{subsec:cylinder proof}

With these preliminaries, we are ready to prove \cref{thm:hyper cylinder}. In fact, for future convenience (see \cref{lem:all cylinders 01}), we prove the following slight strengthening of \cref{thm:hyper cylinder}.
\begin{theorem}\label{thm:hyper cylinder with delta}
	Fix an integer $t$, a parameter $\eta>0$, and an increasing polynomial function $\psi:(0,1) \to (0,1)$ satisfying $\psi(x) \leq 2^{-100}x^{28}$. Let $\h$ be a $t$-partite $3$-graph. Then there exists a cylinder chain partition $\p = (\p_V, \p_E)$, such that $\h$ is $(\eta, \psi)$-quasirandom relative to $\p$, and such that
	\[
		\ab{\p_V}  \leq \twr\left(\frac{10^5 t^6}{\eta^3},\psi\left(\frac\eta{t^2}\right)^{-20}\right)
	\]
	and
	\[
		\ab{\p_E} \leq \twr\left(\frac{10^5 t^6}{\eta^3},\left(\frac{t^2}\eta\right)^{10}\right).
	\]
	Furthermore, there is a constant $\delta>0$, depending only on $t,\eta$, and $\psi$, such that a $(1-\eta)$-fraction of tuples $(x_1,\dots,x_t)$ lie in a chain with product density at least $\delta$.
\end{theorem}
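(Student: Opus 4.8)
The proof proceeds by an energy-increment argument on the mean-squared density $q(\p)$, using \cref{lem:q properties} to control boundedness and monotonicity, \cref{lem:graph cylinder} to regularize the underlying graphs at each step, and \cref{lem:one cylinder} to obtain the density increment whenever the current cylinder chain partition fails to be quasirandom. We run the iteration with carefully nested parameters: an "inner" graph-quasirandomness parameter $\alpha$ chosen as a polynomial in $\psi(\eta/t^2)$, a "middle" hypergraph-quasirandomness parameter (essentially $\eta/t^2$), and a "density threshold" $\delta$ to be specified below. The point of running the cylinder regularity lemma for graphs at each step (rather than Szemerédi's regularity lemma) is that each step only exponentiates the number of parts, so after a bounded number of iterations we obtain a tower-type bound rather than a wowzer-type bound.

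\medskip
\textbf{Step 1: setting up the iteration.} Start with the trivial cylinder chain partition. At a generic stage we have a cylinder chain partition $\p = (\p_V, \p_E)$. First apply \cref{lem:graph cylinder} to the family of all $t$-partite graphs arising as $\p_E[x_1,\dots,x_t]$, with parameter $\alpha$, refining $\p_V$ so that almost every tuple lies in a cylinder whose induced bipartite graphs are all $\alpha$-quasirandom; this refinement does not decrease $q$ by \cref{lemit:P refinement}. Now, if the resulting partition is $(\eta,\psi)$-quasirandom relative to $\p$, we stop. Otherwise, a $\geq \eta$-fraction of tuples lie in a chain $\h[\p_V[\cdot];\p_E[\cdot];\{i,j,k\}]$ that is not $\psi(\delta(G))$-quasirandom \emph{as a chain}; splitting into cases, either the graph $G$ is not $\psi(\delta(G))$-quasirandom (impossible, since we just regularized with $\alpha \leq \psi(\delta)$ for $\delta$ above the threshold — this is where we need the "$\geq\delta$ product density" conclusion, see Step 3), or $\h$ is not $\eta$-quasirandom relative to $G$. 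In the latter case, \cref{lem:one cylinder} (whose hypothesis $\alpha \leq 2^{-100}\delta^{28}$ is guaranteed by our choice $\alpha \leq \psi(\delta) \leq 2^{-100}\delta^{28}$) produces an edge partition of that tripartite chain raising its mean-squared density by $2^{-10}\eta^2$; doing this for all the bad chains and taking the common refinement raises $q(\p)$ by at least (a constant multiple of) $\eta \cdot 2^{-10}\eta^2$, i.e.\ by $\gtrsim \eta^3/t^3$ or so after accounting for the weighting by $t$-partite sub-triples.

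\medskip
\textbf{Step 2: the termination and size bounds.} By \cref{lemit:P0t3}, $q(\p) \leq \binom t3$, so the iteration terminates after at most $O(t^3 / (\eta^3/t^3)) = O(t^6/\eta^3)$ steps. At each step, the number of vertex parts is exponentiated via \cref{lem:graph cylinder} (a bound of the shape $2^{m t^2 \alpha^{-20}}$, where $m$ is the current number of edge-parts) and the number of edge parts is exponentiated via \cref{lem:one cylinder} (a bound $3^{\delta^{-4}}$). Chasing these through $O(t^6/\eta^3)$ iterations, and plugging in $\alpha = \psi(\eta/t^2)$-ish and $\delta = $ a fixed polynomial in $\eta/t$ and $\psi(\eta/t^2)$, gives exactly the claimed tower bounds $\twr(10^5 t^6/\eta^3, \psi(\eta/t^2)^{-20})$ and $\twr(10^5 t^6/\eta^3, (t^2/\eta)^{10})$. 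The constants $10^5$, the exponents $20$ and $10$, and the exact form of $\delta$ are chosen (generously) to absorb all the polynomial slack.

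\medskip
\textbf{Step 3: the extra "$\geq\delta$ product density" conclusion, and the main obstacle.} The new conclusion we must deliver — that a $(1-\eta)$-fraction of tuples lie in a chain with product density at least $\delta$ — is precisely what makes the argument self-consistent: the danger is that many tuples land in cylinders where some bipartite density $d_G(Y_i,Y_j)$ is tiny, in which case the hypothesis $\alpha \leq \psi(\delta(G))$ of \cref{lem:one cylinder} fails and we cannot run the increment. The fix is standard: whenever a bipartite graph in the edge partition has density below $\delta^{1/\binom t2}$, its contribution to the count of $t$-tuples is at most $\delta^{1/\binom t2}$; summing over the $\binom t2$ pairs, at most a $\binom t2 \delta^{1/\binom t2}$-fraction of tuples lie in low-product-density chains. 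Choosing $\delta$ so that $\binom t2 \delta^{1/\binom t2} \leq \eta$ (which forces $\delta$ to be exponentially small in $t$ but still a fixed constant depending only on $t,\eta,\psi$, as required) we simply \emph{ignore} these tuples throughout: we never attempt to regularize or increment on chains with product density below $\delta$, and we absorb this ignored $\eta$-fraction into the final failure probability. This is the only genuinely delicate bookkeeping point; once it is in place, every application of \cref{lem:graph cylinder} and \cref{lem:one cylinder} is on a chain with $\delta(G) \geq \delta$, so the hypotheses are met and the energy increment goes through. I expect this interplay — choosing $\delta$ small enough to make the low-density tuples negligible, yet having all subsequent constants depend only on $t,\eta,\psi$ and not blow up the tower height — to be the main place where care is needed; everything else is a routine (if lengthy) parameter chase combined with the cited lemmas.
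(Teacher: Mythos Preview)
Your high-level strategy is correct and matches the paper closely: energy increment on $q(\p)$ using \cref{lem:one cylinder} to boost the mean-squared density on non-quasirandom chains and \cref{lem:graph cylinder} to restore graph quasirandomness at each step, terminating in $O(t^6/\eta^3)$ rounds by \cref{lemit:P0t3}.

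There is, however, a genuine gap in Step~3. Your claim that ``at most a $\binom t2 \delta^{1/\binom t2}$-fraction of tuples lie in low-product-density chains'' omits a crucial factor. For a fixed pair $(i,j)$ and a fixed vertex cylinder $Y$, the edge partition $\p_E(Y_i\times Y_j)$ contains up to $\ab{\p_E}$ bipartite graphs, \emph{each} of which could have density below your threshold $\delta^{1/\binom t2}$; the total fraction of pairs $(y_i,y_j)$ landing in low-density pieces is therefore bounded only by $\ab{\p_E}\cdot\delta^{1/\binom t2}$, not by $\delta^{1/\binom t2}$. Since $\ab{\p_E}$ grows (tower-fast) during the iteration, a single $\delta$ chosen as ``a fixed polynomial in $\eta/t$ and $\psi(\eta/t^2)$'' cannot keep this fraction below $\eta$ at every step. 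And you cannot simply choose $\delta$ in advance small enough to beat the final $\ab{\p_E}$, because the per-step edge-partition blowup (via common refinement of the $3^{\delta(G)^{-4}}$-part outputs of \cref{lem:one cylinder}) itself depends on $\delta$, so this is circular.

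The paper resolves this by letting the density threshold shrink with the iteration: at step $\tau$ one takes $\delta_\tau = (\eta/(t^2 B(\tau)))^{\binom t2}$ where $B(\tau)$ bounds the current $\ab{\p_E}$, and correspondingly the graph-regularity parameter becomes $\alpha_\tau=\psi(\delta_\tau)$. This guarantees the low-density fraction stays below $\eta/2$ at every step, but now $3^{\delta_\tau^{-4}}$ and $\alpha_\tau^{-20}$ both involve $B(\tau)$. This coupling is exactly what drives the doubly-exponential-per-step recursion $B(\tau+1)=2^{2^{(t^2 B(\tau)/\eta)^{2t^2}}}$ (and the analogous recursion for $A$), which in turn is what produces a tower of height $O(t^6/\eta^3)$. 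Your sketch treats $\delta$ and $\alpha$ as fixed throughout, which would give the wrong recursion and leave the low-density bound unjustified; once you let them decrease per step as above, the rest of your outline goes through essentially as written.
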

The only difference between \cref{thm:hyper cylinder,thm:hyper cylinder with delta} is the final sentence, stating that most $t$-tuples lie in chains whose product density is not too small. This will be useful for us later, and arises naturally from the proof.

\begin{proof}[Proof of \cref{thm:hyper cylinder with delta}]
	We begin by defining functions $A,B:\N \to \N$ as follows. We set $A(0) = B(0)= 1$, and recursively define
	\begin{align*}
		B(\tau+1) &= 2^{2^{(t^2 B(\tau)/\eta)^{2t^2}}},\\
		A(\tau+1) &=2^{B(\tau+1)t^4\psi(\eta/(t^2 B(\tau+1)))^{-20t^2}}A(\tau) .
	\end{align*}
	The fact that $\psi$ is a polynomial function implies that
	\begin{equation}\label{eq:AB UB}
		A(\tau) \leq \twr\left(3\tau,\psi\left(\frac\eta{t^2}\right)^{-20}\right) \qquad \text{ and } \qquad
		B(\tau) \leq \twr\left(3\tau,\left(\frac{t^2}\eta\right)^{10}\right).
	\end{equation}
	We remark that similar bounds would hold if $\psi$ were any function such that $1/\psi$ is of constant tower height; the only difference is that the tower height of $A(\tau)$ would be at most $C\tau$, for some constant $C$ depending on $\psi$. In case $\psi$ is a polynomial, we may take $C=3$, as stated in \eqref{eq:AB UB}.

	We will define a sequence $\p\up 0, \p \up 1, \dots$ of cylinder chain partitions, and inductively maintain the following properties, where $X_1,\dots,X_t$ are the $t$ vertex parts of $\h$. 
	\begin{enumerate}[label=(\roman*)]
		\item $\ab{\p_V\up \tau} \leq A(\tau)$. \label{it:PV UB}
		\item $\ab{\p_E\up \tau} \leq B(\tau)$. \label{it:PE UB}
		\item $q(\p\up \tau) \geq 2^{-10} \eta^3 \tau/t^3$. \label{it:q LB}
		\item Let $\delta=(\eta/(t^2B(\tau)))^{\binom t2}$ and $\alpha=\psi(\delta)$. If we sample $(x_1,\dots,x_t) \in X_1 \dtimes X_t$ uniformly at random,
		the following happens with probability at least $1-\eta/2$: the $t$-partite graph $Z\coloneqq \p_E\up \tau[x_1,\dots,x_t]$ satisfies $\delta(Z) \geq \delta$ and $Z$ is $\alpha$-quasirandom.
		\label{it:edge quasirandom}
		\item Either $\h$ is $(\eta,\psi)$-quasirandom with respect to $\p\up \tau$, or else we can define $\p\up{\tau+1}$ satisfying the above properties. \label{it:stop}
	\end{enumerate}

	To begin the induction, we let $\p_V\up 0$ be the trivial cylinder partition into one part $Y^0=X_1 \dtimes X_t$, and let the corresponding edge partition $\p_E(Y^0(X_i) \times Y^0(X_j))$ also be the trivial partition into one part $X_i \times X_j$, for all $1 \leq i<j\leq t$. Then it is clear that the claimed properties \ref{it:PV UB}--\ref{it:edge quasirandom} hold. 

	Inductively, suppose we have defined $\p\up\tau$ satisfying properties \ref{it:PV UB}--\ref{it:edge quasirandom}. We are done if $\h$ is $(\eta,\psi)$-quasirandom with respect to $\p\up \tau$, so we may assume that this is not the case.

	As in property \ref{it:edge quasirandom}, let $\delta=(\eta/(t^2B(\tau)))^{\binom t2}$ and let $\alpha = \psi(\delta)$. The fact that $\h$ is not $(\eta,\psi)$-quasirandom with respect to $\p\up \tau$ means that if we pick $(x_1,\dots,x_t) \in X_1 \dtimes X_t$ uniformly at random, then with probability at least $\eta$, the chain $\h[\p_V\up \tau[x_1,\dots,x_t];\p_E\up \tau[x_1,\dots,x_t]]$ is not $(\eta,\psi)$-quasirandom. That, in turn, means that with probability at least $\eta$, either the chain $\h[\p_V\up \tau[x_1,\dots,x_t];\p_E\up \tau[x_1,\dots,x_t]]$ is not $\eta$-quasirandom, or that the $t$-partite graph $\p_E\up \tau[x_1,\dots,x_t]$ is not $\psi(\delta(\p_E\up \tau[x_1,\dots,x_t]))$-quasirandom.

	In this random experiment, we know by property \ref{it:edge quasirandom} that with probability at least $1-\eta/2$, the $t$-partite graph $\p_E\up \tau[x_1,\dots,x_t]$ satisfies $\delta(\p_E\up \tau[x_1,\dots,x_t]) \geq \delta$ and is $\alpha$-quasirandom. Since $\psi$ is increasing, this in turn implies that $\p_E\up \tau[x_1,\dots,x_t]$ is $\psi(\delta(\p_E\up \tau[x_1,\dots,x_t]))$-quasirandom. Therefore, with probability at least $\eta-\eta/2 = \eta/2$, we have that $\p_E\up \tau[x_1,\dots,x_t]$ is $\psi(\delta(\p_E\up \tau[x_1,\dots,x_t]))$-quasirandom, whereas the chain $\h[\p_V\up \tau[x_1,\dots,x_t];\p_E\up \tau[x_1,\dots,x_t]]$ is not $\eta$-quasirandom. By the pigeonhole principle, there exist distinct indices $a,b,c \in [t]$ such that for at least an $\eta/(2\binom t3)$-fraction of the tuples $(x_1,\dots,x_t) \in X_1 \dtimes X_t$, the tripartite chain 
	\begin{equation}
		\h[\p_V\up \tau[x_1,\dots,x_t];\p_E\up \tau[x_1,\dots,x_t]; \{a,b,c\}]\label{eq:useful}
	\end{equation}
	is not $\eta$-quasirandom but does satisfy that $\delta(\p_E\up \tau[x_1,\dots,x_t]) \geq \delta$ and that $\p_E\up \tau[x_1,\dots,x_t]$ is $\psi(\delta(\p_E\up \tau[x_1,\dots,x_t]))$-quasirandom. We call such $t$-tuples $(x_1,\dots,x_t)$ \emph{useful}. Extending this terminology, we also call the tripartite chain in \eqref{eq:useful} \emph{useful}.

	Fix a useful $t$-tuple $(x_1,\dots,x_t)$, and let $Y = \p_V\up \tau[x_1,\dots,x_t]$ and $Z = \p_E\up \tau[x_1,\dots,x_t]$. Let $G$ be the tripartite graph with parts $Y_a \cup Y_b \cup Y_c$ and edge sets $Z_{ab} \cup Z_{ac} \cup Z_{bc}$. Note that $\delta(G) \geq \delta(Z) \geq \delta$ and that $G$ is $\psi(\delta(G))$-quasirandom, as $\psi$ is increasing and by the definition of a useful tuple.
	We may thus apply \cref{lem:one cylinder} to $G$, using our assumptions that $\psi$ is increasing and satisfies $\psi(x) \leq 2^{-100}x^{28}$. Doing this gives us, for each such useful chain, an edge partition of $G$ into at most $3^{\delta(G)^{-4}}\leq 3^{\delta(Z)^{-4}} \leq 3^{\delta^{-4}}$ parts.
	
	We now do this process for all useful chains, and use this step to define a new edge partition $\p_E\up {\tau+\frac 12}$ (for the moment, we will not change $\p_V\up \tau$). To do so,
	let us fix a vertex cylinder $Y \in \p_V\up \tau$, and define the new edge partition $\p_E\up{\tau+\frac 12}(Y)$ as follows. 
	Let the bipartite graphs in $\p_E(Y_a \times Y_b)$ be $Z_{ab}^1,\dots,Z_{ab}^\ell$. For each $1 \leq \mu \leq \ell$, we consider the set of all tripartite graphs arising by adding to $Z_{ab}^\mu$ some part of $\p_E\up \tau(Y_a \times Y_c)$ and $\p_E\up \tau (Y_b \times Y_c)$. Some of these tripartite chains are useful and hence subpartitioned by \cref{lem:one cylinder}, and we then let $Z_{ab}^\mu$ be partitioned as the common refinement of all of these partitions. By doing this for all $1 \leq \mu \leq \ell$, we obtain a new edge partition $\p_E\up{\tau+\frac 12}(Y_a \times Y_b)$. We do the same to obtain the edge partitions $\p_E\up{\tau+\frac 12}(Y_a \times Y_c),\p_E\up{\tau+\frac 12}(Y_b \times Y_c)$. For all $ij \notin \{ab,ac,bc\}$, we set $\p_E\up{\tau+\frac 12}(Y_i \times Y_j) = \p_E\up \tau (Y_i \times Y_j)$. Finally, doing this for all $Y \in \p_V\up \tau$ defines our new edge partition $\p_E\up{\tau+\frac 12}$. 
	
	We now claim that
	\begin{equation}\label{eq:density boost}
		q((\p_V\up \tau, \p_E\up{\tau+\frac 12})) \geq q((\p_V\up \tau, \p_E\up \tau)) + 2^{-10} \eta^3/t^3.
	\end{equation}
	Indeed, we first note that as we did not change the vertex partition,
	\begin{equation}\label{eq:pass to Y}
		q((\p_V\up \tau, \p_E\up{\tau+\frac 12})) - q((\p_V\up \tau, \p_E\up \tau)) = \sum_{Y \in \p_V\up \tau} \left( \prod_{1 \leq i \leq t} \frac{\ab{Y_i}}{\ab{X_i}} \right) \left(q(\p_E\up{\tau + \frac 12}(Y))-q(\p_E\up \tau(Y))\right).
	\end{equation}
	To estimate the inner difference, let us fix some $Y \in \p_V\up \tau$. We then have by definition that
	\[
		q(\p_E\up{\tau + \frac 12}(Y))-q(\p_E\up \tau(Y)) =
		\sum_{1 \leq i<j<k\leq t}\left[q(\p_E\up{\tau + \frac 12}(K(Y_i,Y_j,Y_k)))-q(\p_E\up \tau(K(Y_i,Y_j,Y_k)))\right].
	\]
	For each triple $(i,j,k) \neq (a,b,c)$, we recall that the edge partition $\p_E\up{\tau + \frac 12}(K(Y_i,Y_j,Y_k))$ refines $\p_E\up{\tau}(K(Y_i,Y_j,Y_k))$, hence $q(\p_E\up{\tau + \frac 12}(K(Y_i,Y_j,Y_k)))-q(\p_E\up \tau(K(Y_i,Y_j,Y_k)))\geq 0$ by \cref{lemit:G refinement}. Therefore,
	\begin{equation}\label{eq:ijk abc}
		q(\p_E\up{\tau + \frac 12}(Y))-q(\p_E\up \tau(Y)) \geq q(\p_E\up{\tau + \frac 12}(K(Y_a,Y_b,Y_c)))-q(\p_E\up \tau(K(Y_a,Y_b,Y_c))).
	\end{equation}
	By definition, we have that
	\begin{equation}\label{eq:q tau+1/2}
		q(\p_E\up{\tau + \frac 12}(K(Y_a,Y_b,Y_c)))=\sum_{\substack{Z'_{ab}\in \p_E\up{\tau+\frac 12}(Y_a \times Y_b)\\Z'_{ac}\in \p_E\up{\tau+\frac 12}(Y_a \times Y_c)\\Z'_{bc}\in \p_E\up{\tau+\frac 12}(Y_b \times Y_c)}} \frac{\ab{\Delta(Z'_{ab}\cup Z'_{ac}\cup Z'_{bc})}}{\ab{Y_a}\ab{Y_b}\ab{Y_c}}d(\h \mid Z'_{ab}\cup Z'_{ac}\cup Z'_{bc})^2.
	\end{equation}
	Recall that $\p_E\up{\tau+\frac 12}(Y_a \times Y_b)$ refines $\p_E\up{\tau}(Y_a \times Y_b)$, and similarly for $Y_a \times Y_c$ and $Y_b \times Y_c$. This means that we may rearrange the sum in \eqref{eq:q tau+1/2} to first sum over parts in $\p_E \up \tau$ and thus find that $q(\p_E\up{\tau + \frac 12}(K(Y_a,Y_b,Y_c)))$ equals
	\begin{equation*}
		\sum_{\substack{Z_{ab}\in \p_E\up{\tau}(Y_a \times Y_b)\\Z_{ac}\in \p_E\up{\tau}(Y_a \times Y_c)\\Z_{bc}\in \p_E\up{\tau}(Y_b \times Y_c)}}\sum_{\substack{Z'_{ab}\in \p_E\up{\tau+\frac 12}(Y_a \times Y_b)\\Z'_{ac}\in \p_E\up{\tau+\frac 12}(Y_a \times Y_c)\\Z'_{bc}\in \p_E\up{\tau+\frac 12}(Y_b \times Y_c)\\Z_{ab}' \subseteq Z_{ab},Z_{ac}'\subseteq Z_{ac},Z_{bc}'\subseteq Z_{bc}}} \frac{\ab{\Delta(Z'_{ab}\cup Z'_{ac}\cup Z'_{bc})}}{\ab{Y_a}\ab{Y_b}\ab{Y_c}}d(\h \mid Z'_{ab}\cup Z'_{ac}\cup Z'_{bc})^2.
	\end{equation*}
	This, in turn, is equal to
	\[
		\sum_{\substack{Z_{ab}\in \p_E\up{\tau}(Y_a \times Y_b)\\Z_{ac}\in \p_E\up{\tau}(Y_a \times Y_c)\\Z_{bc}\in \p_E\up{\tau}(Y_b \times Y_c)}}
		\frac{\ab{\Delta(Z_{ab}\cup Z_{ac}\cup Z_{bc})}}{\ab{Y_a}\ab{Y_b}\ab{Y_c}}\hspace{-7.6pt}
		\sum_{\substack{Z'_{ab}\in \p_E\up{\tau+\frac 12}(Y_a \times Y_b)\\Z'_{ac}\in \p_E\up{\tau+\frac 12}(Y_a \times Y_c)\\Z'_{bc}\in \p_E\up{\tau+\frac 12}(Y_b \times Y_c)\\Z_{ab}' \subseteq Z_{ab},Z_{ac}'\subseteq Z_{ac},Z_{bc}'\subseteq Z_{bc}}} \frac{\ab{\Delta(Z'_{ab}\cup Z'_{ac}\cup Z'_{bc})}}{\ab{\Delta(Z_{ab}\cup Z_{ac}\cup Z_{bc})}}d(\h \mid Z'_{ab}\cup Z'_{ac}\cup Z'_{bc})^2.
	\]
	Crucially, we now note that this inner sum is exactly the type of quantity appearing in \eqref{eq:mean-squared}. More precisely, suppose we fix 
	some $Z_{ab} \in \p_E\up \tau(Y_a \times Y_b), Z_{ac} \in \p_E\up \tau(Y_a \times Y_c),Z_{bc} \in \p_E\up \tau(Y_b\times Y_c)$, and define an edge partition $\wt{\p_E}(Z_{ab}\cup Z_{ac}\cup Z_{bc})$ of the tripartite graph they form by simply restricting $\p_E\up{\tau+\frac 12}$ to this tripartite graph. Then the inner sum above is exactly $q(\wt{\p_E}(Z_{ab}\cup Z_{ac}\cup Z_{bc}))$, as defined in \eqref{eq:mean-squared}.
	Therefore, we can rewrite the above as
	\[
		q(\p_E\up{\tau+\frac 12}(K(Y_a,Y_b,Y_c))) = \sum_{\substack{Z_{ab}\in \p_E\up{\tau}(Y_a \times Y_b)\\Z_{ac}\in \p_E\up{\tau}(Y_a \times Y_c)\\Z_{bc}\in \p_E\up{\tau}(Y_b \times Y_c)}}
		\frac{\ab{\Delta(Z_{ab}\cup Z_{ac}\cup Z_{bc})}}{\ab{Y_a}\ab{Y_b}\ab{Y_c}} q(\wt{\p_E}(Z_{ab}\cup Z_{ac}\cup Z_{bc})).
	\]
	For such $Z_{ab},Z_{ac},Z_{bc}$, we also define another edge partition $\p_E'(Z_{ab}\cup Z_{ac}\cup Z_{bc})$ of the same tripartite graph as follows. If the chain $(Z_{ab}\cup Z_{ac}\cup Z_{bc},\h)$ is not useful, we let $\p_E'(Z_{ab}\cup Z_{ac}\cup Z_{bc})$ be the trivial edge partition where each bipartite graph is partitioned into a single part. On the other hand, if this chain is useful, then $\p_E'(Z_{ab}\cup Z_{ac}\cup Z_{bc})$ is defined to be the edge partition given to us by \cref{lem:one cylinder}. Note that in either case, $\wt{\p_E}(Z_{ab}\cup Z_{ac}\cup Z_{bc})$ refines the edge partition $\p_E'(Z_{ab}\cup Z_{ac}\cup Z_{bc})$; indeed, $\p_E\up{\tau+\frac 12}$ was defined to be the common refinement of all partitions arising in this way. Hence, applying \cref{lemit:G refinement} to each tripartite graph $Z_{ab}\cup Z_{ac}\cup Z_{bc}$, we find that
	\[
		q(\p_E\up{\tau+\frac 12}(K(Y_a,Y_b,Y_c))) \geq \sum_{\substack{Z_{ab}\in \p_E\up{\tau}(Y_a \times Y_b)\\Z_{ac}\in \p_E\up{\tau}(Y_a \times Y_c)\\Z_{bc}\in \p_E\up{\tau}(Y_b \times Y_c)}}
		\frac{\ab{\Delta(Z_{ab}\cup Z_{ac}\cup Z_{bc})}}{\ab{Y_a}\ab{Y_b}\ab{Y_c}} q(\p_E'(Z_{ab}\cup Z_{ac}\cup Z_{bc})).
	\]
	Plugging in the definition of $q(\p_E\up \tau(K(Y_a,Y_b,Y_c)))$, we conclude that
	\begin{multline*}
		q(\p_E\up{\tau + \frac 12}(K(Y_a,Y_b,Y_c)))-q(\p_E\up \tau(K(Y_a,Y_b,Y_c))) \\\geq \sum_{\substack{Z_{ab}\in \p_E\up{\tau}(Y_a \times Y_b)\\Z_{ac}\in \p_E\up{\tau}(Y_a \times Y_c)\\Z_{bc}\in \p_E\up{\tau}(Y_b \times Y_c)}}\frac{\ab{\Delta(Z_{ab}\cup Z_{ac}\cup Z_{bc})}}{\ab{Y_a}\ab{Y_b}\ab{Y_c}} \left[ q(\p_E'(Z_{ab}\cup Z_{ac}\cup Z_{bc}))- d(\h \mid Z_{ab}\cup Z_{ac}\cup Z_{bc})^2 \right].
	\end{multline*}
	In the sum above, if the choice of $Z_{ab},Z_{ac},Z_{bc}$ is such that $(Z_{ab}\cup Z_{ac}\cup Z_{bc}, \h)$ is not useful, the difference inside the sum equals zero, as $\p_E'(Z_{ab}\cup Z_{ac}\cup Z_{bc})$ is then a trivial partition, whose mean-squared density equals $d(\h \mid Z_{ab}\cup Z_{ac}\cup Z_{bc})^2$ (recall \cref{def:mean-squared}). For the remaining summands, $\p_E'(Z_{ab}\cup Z_{ac}\cup Z_{bc})$ is defined to be the output of \cref{lem:one cylinder}, and hence the difference inside the sum is at least $2^{-10}\eta^2$. Combining this with \eqref{eq:pass to Y} and \eqref{eq:ijk abc}, we conclude that
	\begin{multline*}
		q((\p_V\up \tau, \p_E\up{\tau+\frac 12})) - q((\p_V\up \tau, \p_E\up \tau)) \\
		\geq 2^{-10}\eta^2 \cdot\sum_{Y \in \p_V\up \tau} \left( \prod_{1 \leq i \leq t} \frac{\ab{Y_i}}{\ab{X_i}} \right)\sum_{\substack{Z_{ab}\in \p_E\up{\tau}(Y_a \times Y_b)\\Z_{ac}\in \p_E\up{\tau}(Y_a \times Y_c)\\Z_{bc}\in \p_E\up{\tau}(Y_b \times Y_c)}}\frac{\ab{\Delta(Z_{ab}\cup Z_{ac}\cup Z_{bc})}}{\ab{Y_a}\ab{Y_b}\ab{Y_c}}\cdot  \bm 1_{(Z_{ab}\cup Z_{ac}\cup Z_{bc},\h)\text{ is useful}}.
	\end{multline*}
	The final observation is that this sum (i.e.\ the right-hand expression without the factor $2^{-10}\eta^2$) is precisely the probability that a uniformly random tuple $(x_1,\dots,x_t) \in X_1 \dtimes X_t$ is useful. By assumption, this probability is at least $\eta/(2\binom t3)\geq \eta/t^3$, and this concludes the proof of the claim \eqref{eq:density boost}.

	In order to estimate $\ab{\p_E\up{\tau+\frac 12}}$, let $\ell = \ab{\p_E\up{\tau}(Y)}$ for some vertex cylinder $Y$. Let $\p_E\up \tau(Y_a \times Y_b) = \{Z_{ab}^1,\dots,Z_{ab}^\ell\}$, and consider some $Z_{ab}^\mu$. There are at most $\ell^2$ choices of bipartite graphs in $\p_E\up \tau(Y_a \times Y_c), \p_E\up \tau (Y_b \times Y_c)$ for which the corresponding chain is useful. For each of these $\ell^2$ choices, \cref{lem:one cylinder} partitions $Z_{ab}^\mu$ into at most $3^{\delta^{-4}}$ subgraphs, and then $\p_E\up{\tau+\frac 12}(Y_a \times Y_b)$ is obtained by taking the common refinement of all of these. This implies that $\ab{\p_E\up{\tau+\frac 12}(Y_a \times Y_b)} \leq \ell \cdot 2^{\ell^2 3^{\delta^{-4}}} \leq 2^{\ell^3 3^{\delta^{-4}}}$. Applying the same argument to $ac$ and $bc$, we conclude that
	\[
		\ab{\p_E\up{\tau+\frac 12}(Y)} \leq 2^{\ab{\p_E\up \tau}^3 3^{\delta^{-4}}} \leq 2^{B(\tau)^3 3^{(t^2 B(\tau)/\eta)^{4\binom t2}}} \leq B(\tau+1).
	\]
	What remains now is to additionally refine the vertex cylinder partition $\p_V\up \tau$ in order to ensure that property \ref{it:edge quasirandom} holds for $\p\up{\tau+1}$. Fix some $Y \in \p_V\up \tau$. The edge parts in $\p_E\up{\tau+\frac 12}(Y)$ form a collection of $m \leq \binom t2 B(\tau+1)$ bipartite graphs, each with vertex set $Y(X_i) \cup Y(X_j)$ for some $i<j$. In particular, we may and will also view them as $t$-partite graphs\footnote{It would perhaps be more natural to apply the cylinder regularity lemma for \emph{bipartite} graphs (i.e.\ \cref{lem:graph cylinder} with $t=2$), regularizing each of the $\binom t2$ partitions $\p_E\up{\tau+\frac 12}(Y_i \times Y_j)$ one by one. The problem with this approach is that when we regularize a pair, we may destroy the regularity in previously-processed pairs. This is essentially the same reason why, if one wants to prove a regularity lemma for several graphs on the same vertex set, it is better to run the proof of the regularity lemma for all of them simultaneously, as opposed to applying the regularity lemma to each of them in turn.} on the vertex set $Y(X_1)\cup \dotsb \cup Y(X_t)$, which contain only edges between $Y(X_i)$ and $Y(X_j)$. Let $\varepsilon = (\eta/ (t^2B(\tau+1)))^{\binom t2}$, and let $\beta = \psi(\varepsilon)$; these will be the values of $\delta,\alpha$, respectively, corresponding to step $\tau+1$ of the iteration. Applying \cref{lem:graph cylinder}, we find a vertex cylinder partition of $Y$ into at most $2^{mt^2\beta^{-20}}$ parts, such that a $1-\beta/2$ fraction of the $t$-tuples of vertices define a $\beta$-quasirandom cylinder within each part of $\p_E\up{\tau+\frac 12}(Y)$. Let $\p_V\up{\tau+1}$ be the vertex cylinder partition obtained by partitioning each such $Y \in \p_V\up \tau$. Note that
	\[
		\ab{\p_V\up {\tau+1}} \leq 2^{\binom t2 B(\tau+1)\cdot t^2 \beta^{-20}} \ab{\p_V\up \tau} \leq A(\tau+1).
	\]
	Let $\p_E\up{\tau+1}$ be the edge partition obtained by restricting $\p_E\up{\tau+\frac 12}$ to the new vertex cylinders in $\p_V\up{\tau+1}$, and let $\p\up{\tau+1} = (\p_V\up{\tau+1},\p_E\up {\tau+1})$. Note that for every $Y \in \p_V\up {\tau+1}$, we have that $\ab{\p_E\up{\tau+1}(Y)} \leq\ab{\p_E\up {\tau+\frac 12}(\wh Y)} \leq B(\tau+1)$, where $\wh Y$ is the (unique) vertex cylinder in $\p_V\up \tau$ containing $Y$.  As $\p\up {\tau+1}$ is a refinement of $(\p_V\up \tau, \p_E\up{\tau+\frac 12})$, we have by \cref{lemit:P refinement} that
	\[
		q(\p\up{\tau+1}) \geq q(\p\up \tau) + 2^{-10}\eta^3/t^3 \geq 2^{-10} \eta^3 (\tau+1) /t^3,
	\]
	by applying property \ref{it:q LB} to $\p\up \tau$. This proves that properties \ref{it:PV UB}, \ref{it:PE UB}, and \ref{it:q LB} hold for $\p\up {\tau+1}$.

	Moreover, by our choice of $\p_V\up {\tau+1}$, we know that for every $Y \in \p_V\up {\tau+1}$, if we sample $(y_1,\dots,y_t) \in Y(X_1)\dtimes Y(X_t)$ uniformly at random, then with probability at least $1-\beta/2$, we have that $Z \coloneqq \p_E\up {\tau+1}[y_1,\dots,y_t]$ is $\beta$-quasirandom.
	Let us call the tuple $(y_1,\dots,y_t)$ \emph{sparse for $(i,j)$} if $d(Z_{ij})<\varepsilon^{1/\binom t2}$, or equivalently if the number of edges in $Z_{ij}$ is less than $\varepsilon^{1/\binom t2} \ab{Y(X_i)} \ab{Y(X_j)}$. As there are at most $B(\tau+1)$ choices for $Z_{ij}$, the total number of edges contributed by the sparse tuples is less than $\varepsilon^{1/\binom t2} B(\tau+1)\ab{Y(X_i)}\ab{Y(X_j)}=(\eta/t^2)\ab{Y(X_i)}\ab{Y(X_j)}$.

	We now recall that the graphs $Z_{ij}$ partition the complete bipartite graph $Y(X_i)\times Y(X_j)$. Since at most an $(\eta/t^2)$-fraction of those edges come from tuples that are sparse for $(i,j)$, we conclude that the probability that $(y_1,\dots,y_t)$ is sparse for $(i,j)$ is at most $\eta/t^2$. However, if $\delta(Z)< \varepsilon$ then $d(Z_{ij})<\varepsilon^{1/\binom t2}$ for some $i,j$, and hence the probability that $\delta(Z)<\varepsilon$ is at most $\binom t2 (\eta/t^2)$. In total, the probability of violating property \ref{it:edge quasirandom} is at most $\beta/2+\binom t2(\eta/t^2) \leq \eta/2$. Recall that this holds for any fixed $Y$, and for a uniformly random sample of $(y_1,\dots,y_t) \in Y(X_1)\dtimes Y(X_t)$; in particular, this implies that the same holds for uniformly random $(x_1,\dots,x_t) \in X_1 \dtimes X_t$, demonstrating that property \ref{it:edge quasirandom} holds for step $\tau+1$. This completes the proof of the inductive step.

	To obtain the desired final partition, we recall by \cref{lemit:P0t3} that $q(\p\up \tau) \leq \binom t3$ for all $\tau$. Therefore, by property \ref{it:q LB}, this process must terminate at some step $\tau \leq \binom t3/(2^{-10}\eta^3/t^3) \leq 10^4 t^6 \eta^{-3}$. At that step, by property \ref{it:stop}, we must have found a partition $\p=(\p_V,\p_E)$ such that $\h$ is $(\eta,\psi)$-quasirandom with respect to $\p$. Moreover, by \ref{it:edge quasirandom}, we have that a $(1-\eta)$-fraction of the $t$-tuples define a chain with product density at least $\delta$, where $\delta = (\eta/(t^2 B(10^4 t^6 \eta^{-3})))^{\binom t2}$ depends only on $t,\eta$, and $\psi$. Finally, from \eqref{eq:AB UB}, we have that
	\[
		\ab{\p_V} \leq A (10^4 t^6/\eta^3) \leq \twr(10^5 t^6/\eta^3, \psi(\eta/t^2)^{-20})
	\]
	and
	\[
		\ab{\p_E} \leq B(10^4 t^3/\eta^3) \leq \twr(10^5 t^6/\eta^3, (t^2/\eta)^{10}).\qedhere
	\]
\end{proof}

\section{Proof of Theorem \ref{thm:main UB informal}}\label{sec:UB proof}
In this section we prove \cref{thm:main UB formal}, following the proof sketch discussed in \cref{sec:proof sketch}. Our main tools will be \cref{thm:hyper cylinder with delta} from the previous section, as well as the induced counting lemma, which we now discuss.

The main property we need about $3$-graphs of bounded VC$_2$ dimension is that in every quasirandom chain, the relative density is very close to $0$ or $1$. This follows from the induced counting lemma (\cref{lem:induced counting} below), which informally states that if a tripartite $3$-graph $\h$ is sufficiently quasirandom, and the relative density of hyperedges is bounded away from $0$ and $1$, then it contains any fixed tripartite $3$-graph as an induced subhypergraph. For simplicity, we only state the result for finding one induced copy, but it is easy to see that one could similarly get a counting result.
It is well-known that such induced statements follow from general counting lemmas (see e.g.\ \cite[Remark 2.6.3]{MR4603631}), and indeed, our result follows directly from Gowers's hypergraph counting lemma \cite[Corollary 5.3]{MR2373376}.
Essentially the same statement, without explicit constants, is proved in \cite[Corollary 2.16]{2404.02024}; we include a proof in order to fully track the quantitative dependencies.

\begin{lemma}\label{lem:induced counting}
	Let $\V$ be a tripartite $3$-graph with $r$ vertices. Let $G$ be a tripartite graph on parts $U_1,U_2,U_3$, and let $\h_0$ be a tripartite $3$-graph supported on $\Delta(G)$.

	Let $\delta, \gamma\in (0, \frac 1{100}]$, and define $\eta=10^{-16}\gamma^{10r^3}$ and $\psi(x)=10^{-20}\gamma^{10r^3}x^{16r^2}$. Suppose that $\delta(G)\geq \delta$, that $$\gamma \leq d(\h_0 \mid G)\leq 1-\gamma,$$ and that $\h_0[U;G]$ is $(\eta,\psi)$-quasirandom. If $\min \{\ab{U_1},\ab{U_2},\ab{U_3}\}\geq \delta^{-r^2}\gamma^{-r^3}$, then $\V$ is an induced subhypergraph of $\h$.
\end{lemma}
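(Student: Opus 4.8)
The plan is to produce a tripartitely induced copy of $\V$ inside $\h_0$ by the standard device of writing the number of induced copies as an alternating sum of unconstrained partite‑homomorphism counts, and then estimating each summand with Gowers's hypergraph counting lemma \cite[Corollary 5.3]{MR2373376}. The peculiar polynomial shapes of $\eta$ and $\psi$ in the statement are exactly what is needed to make the error terms of the counting lemma negligible for $3$-graphs on at most $r$ vertices.

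First I would set up the inclusion--exclusion. Write $V(\V)=W_1\cup W_2\cup W_3$ with $|W_i|=r_i$, let $C\subseteq W_1\times W_2\times W_3$ be the set of crossing triples (so $E(\V)\subseteq C$ and $|C|\le r^3$), and for $S\subseteq C\setminus E(\V)$ let $\V_S$ be the tripartite $3$-graph on $V(\V)$ with edge set $E(\V)\cup S$. Let $N(\V_S)$ count the maps $\phi\colon V(\V)\to U_1\cup U_2\cup U_3$ with $\phi(W_i)\subseteq U_i$ that send every edge of $\V_S$ into $E(\h_0)$, with no constraint on crossing non-edges. Since $\h_0$ is supported on $\Delta(G)$, a crossing non-edge of $\V$ mapped outside $\Delta(G)$ is automatically a non-edge of $\h_0$, so the usual inclusion--exclusion over which crossing non-edges of $\V$ are forced to be edges gives
\[
	\#\{\text{tripartitely induced copies of }\V\text{ in }\h_0\}=\sum_{S\subseteq C\setminus E(\V)}(-1)^{|S|}\,N(\V_S),
\]
a sum of at most $2^{r^3}$ terms.

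Next I would estimate each $N(\V_S)$ with the counting lemma, whose hypotheses hold by assumption: $\delta(G)\ge\delta$, the graph $G$ is $\psi(\delta(G))$-quasirandom, and $\h_0$ is $\eta$-quasirandom relative to $G$, i.e.\ $\h_0[U;G]$ is $(\eta,\psi)$-quasirandom in the sense of \cref{def:product density}. This yields $N(\V_S)=(\ol c_S\pm\varepsilon)\,|U_1|^{r_1}|U_2|^{r_2}|U_3|^{r_3}$, where $\ol c_S=d(\h_0\mid G)^{|E(\V_S)|}\prod_{1\le i<j\le 3}d_G(U_i,U_j)^{q_{ij}(S)}$ is the expected partite-copy density ($q_{ij}(S)$ being the number of pairs in $W_i\times W_j$ covered by an edge of $\V_S$), and the error $\varepsilon$ is controlled by $\eta$, by $\psi$ evaluated at the relevant densities, and by $1/\delta$. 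The one genuinely computational point is to substitute $\eta=10^{-16}\gamma^{10r^3}$ and $\psi(x)=10^{-20}\gamma^{10r^3}x^{16r^2}$ into the explicit error bound and verify $\varepsilon\le 2^{-r^3}\gamma^{r^3}\delta^{r^2}/4$; this is precisely where the constants in the statement come from. Summing over $S$ then gives
\[
	\#\{\text{induced copies of }\V\text{ in }\h_0\}=\Bigl(\textstyle\sum_S(-1)^{|S|}\ol c_S\pm\tfrac14\gamma^{r^3}\delta^{r^2}\Bigr)|U_1|^{r_1}|U_2|^{r_2}|U_3|^{r_3}.
\]

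Finally I would lower-bound the main term. The cleanest way is to observe that $\ol c_S$ is exactly the probability that a uniformly random such $\phi$ is a partite copy of $\V_S$ in an \emph{idealized} random chain of the same densities---one in which the pairs of $G$, and then, conditioned on the triangles of $G$, the hyperedges, are drawn independently---so $\sum_S(-1)^{|S|}\ol c_S$ is the probability that a random $\phi$ is an induced copy of $\V$ in this model. Conditioning on the event that every one of the at most $r^2$ crossing pairs in the shadow of $\V$ is mapped to an edge of $G$ (an event of probability at least $\delta(G)^{r^2}\ge\delta^{r^2}$), every triple of $E(\V)$ then spans a triangle, and each crossing triple spanning a triangle is independently a hyperedge with probability $d:=d(\h_0\mid G)\in[\gamma,1-\gamma]$; hence, conditionally, the probability of obtaining exactly the edges of $\V$ among the crossing triples is at least $d^{|E(\V)|}(1-d)^{|C|-|E(\V)|}\ge\gamma^{|C|}\ge\gamma^{r^3}$. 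Thus $\sum_S(-1)^{|S|}\ol c_S\ge\gamma^{r^3}\delta^{r^2}$, which dominates the error term, so the number of induced copies is at least $\tfrac12\gamma^{r^3}\delta^{r^2}\prod_i|U_i|^{r_i}$. Since the number of non-injective $\phi$ is $O_r(\prod_i|U_i|^{r_i}/\min_i|U_i|)$ and $\min_i|U_i|\ge\delta^{-r^2}\gamma^{-r^3}$, an injective induced copy exists, which gives the claim. The main obstacle I anticipate is purely quantitative bookkeeping in this last step together with the preceding one: one must carry the error term of Gowers's counting lemma through, track its dependence on the quasirandomness parameters and on $1/\delta$, and verify that, accumulated over the $\le 2^{r^3}$ auxiliary graphs $\V_S$, it stays safely below the bound $\gamma^{r^3}\delta^{r^2}$ on the main term---which is the content of the specific choices of $\eta$ and $\psi$. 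The conceptual skeleton (inclusion--exclusion plus a counting lemma; cf.\ \cite[Remark 2.6.3]{MR4603631}) is entirely standard.
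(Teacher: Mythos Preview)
Your plan is sound and would work, but it differs from the paper's route. You go via inclusion--exclusion over the at most $2^{r^3}$ supergraphs $\V_S$, applying the counting lemma once for each and then identifying the alternating sum of main terms as a probability in an idealized random chain. The paper instead uses the \emph{complement trick}: it builds a single auxiliary $r$-partite chain by placing one copy of the appropriate $U_j$ for each vertex of $\V$, putting (a copy of) the relevant bipartite piece of $G$ between parts in different classes and the complete bipartite graph between parts in the same class, and then, for each crossing triple of parts, placing a copy of $\h_0$ if the corresponding triple is an edge of $\V$ and the \emph{relative complement} $\Delta(G)\setminus E(\h_0)$ if it is a non-edge. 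Because taking the relative complement preserves $\eta$-quasirandomness and sends the relative density $d$ to $1-d\ge\gamma$, every triple in this auxiliary chain satisfies the hypotheses of the $K_r^{(3)}$ counting lemma, and a single application gives a positive count of transversal $K_r^{(3)}$'s; unwinding the construction shows that each such clique is a tripartitely induced copy of $\V$ in $\h_0$.

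What each approach buys: the paper's argument is cleaner---one invocation of the counting lemma rather than $2^{r^3}$, no alternating sum to interpret, and the main term is read off directly as a product rather than obtained by a conditioning argument. Your approach is the textbook one (exactly what \cite[Remark 2.6.3]{MR4603631} describes) and has the virtue of making explicit where the two-sided density hypothesis $\gamma\le d\le 1-\gamma$ enters. Quantitatively the two are equivalent: the extra $2^{r^3}$ factor in your accumulated error is swallowed by the slack in $\eta=10^{-16}\gamma^{10r^3}$, so the specific constants in the statement suffice for either method.
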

\noindent As the proof of \cref{lem:induced counting} uses standard tools, we defer it to \cref{appendix:induced counting}.

Our next lemma is an immediate corollary of \cref{lem:induced counting}, and states that a sufficiently quasirandom $3$-graph of bounded VC$_2$ dimension must have relative density close to $0$ or $1$.
Results of this type are quite well-known (see e.g.\ \cite[Proposition 4.2]{2111.01737} or \cite[Proposition 3.4]{2404.02024}), but we include a proof in order to explicitly keep track of the quantitative dependencies.

Recall from the introduction that the tripartite $3$-graph $\V_d$ is defined as follows. Its vertex set is $A\sqcup B \sqcup C$, where $A = B = [d]$ and $C = 2^{[d]^2}$, and its hyperedges are all triples $(a,b,S) \in A \times B \times C$ with $(a,b) \in S$. 

\begin{lemma}\label{lem:all cylinders 01}
	Let $d \geq 1$ be an integer, and let $r=2(d+1)+2^{(d+1)^2}$. Let $G$ be a tripartite graph on parts $U_1,U_2,U_3$, and let $\h_0$ be a tripartite $3$-graph supported on $\Delta(G)$.

	Let $\delta, \gamma\in (0, \frac 1{100}]$, and define $\eta=10^{-16}\gamma^{10r^3}$ and $\psi(x)=10^{-20}\gamma^{10r^3}x^{16r^2}$. Suppose that $\delta(G)\geq \delta$, that $\h_0[U;G]$ is $(\eta,\psi)$-quasirandom, and that $\min \{\ab{U_1},\ab{U_2},\ab{U_3}\}\geq \delta^{-r^2}\gamma^{-r^3}$.

	If $\h_0$ has VC$_2$ dimension at most $d$, then
	\[
		d(\h_0\mid G) \in [0,\gamma] \cup [1-\gamma,1].
	\]
\end{lemma}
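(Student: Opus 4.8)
The plan is to prove the contrapositive, using \cref{lem:induced counting} as a black box. Suppose for contradiction that $d(\h_0\mid G)\notin[0,\gamma]\cup[1-\gamma,1]$, i.e.\ that $\gamma<d(\h_0\mid G)<1-\gamma$. The goal is then to conclude that $\h_0$ contains a shattered $K_{d+1,d+1}$, which contradicts the assumption that $\h_0$ has VC$_2$ dimension at most $d$.

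To do this, I would apply \cref{lem:induced counting} with $\V=\V_{d+1}$. The point of the definition $r=2(d+1)+2^{(d+1)^2}$ is that this is exactly the number of vertices of $\V_{d+1}$, whose three parts have sizes $d+1$, $d+1$, and $2^{(d+1)^2}$. Moreover, the values of $\eta$ and $\psi$ in the present statement are given by literally the same formulas in $\gamma$ and $r$ as in \cref{lem:induced counting}, so every hypothesis of that lemma is in force: $\delta,\gamma\in(0,\tfrac1{100}]$; $\delta(G)\geq\delta$; $\gamma\leq d(\h_0\mid G)\leq 1-\gamma$ (in fact strictly, by our assumption); $\h_0[U;G]$ is $(\eta,\psi)$-quasirandom; and $\min\{\ab{U_1},\ab{U_2},\ab{U_3}\}\geq\delta^{-r^2}\gamma^{-r^3}$. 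Hence \cref{lem:induced counting} yields an induced copy of $\V_{d+1}$ inside $\h_0$, with the three parts of $\V_{d+1}$ placed into $U_1,U_2,U_3$ respectively.

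It then remains to convert this into the required contradiction. Since $\h_0$ is supported on $\Delta(G)$ for the tripartite graph $G$, it has no edge contained in a single part $U_i$, and likewise $\V_{d+1}$ has no edge contained in a single one of its parts; consequently the induced copy produced above is in fact a \emph{tripartitely} induced copy of $\V_{d+1}$, in the sense recalled just after the definition of VC$_2$ dimension. By the equivalence recalled there, a tripartitely induced copy of $\V_{d+1}$ is precisely a shattered $K_{d+1,d+1}$, so $\h_0$ has VC$_2$ dimension at least $d+1$---the desired contradiction. Therefore $d(\h_0\mid G)\in[0,\gamma]\cup[1-\gamma,1]$.

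I do not expect any genuine obstacle here: this lemma is essentially a repackaging of \cref{lem:induced counting}, and the constants defining $\eta$ and $\psi$ were manifestly chosen so that the parameters match up on the nose. The only point requiring (minor) care is the last step, namely checking that the ``induced subhypergraph'' delivered by the counting lemma can legitimately be read as a ``tripartitely induced subgraph'', so that it connects to the combinatorial characterization of VC$_2$ dimension; but this is immediate from the fact that neither $\h_0$ nor $\V_{d+1}$ has any edge lying inside a single part.
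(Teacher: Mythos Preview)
Your proof is correct and follows essentially the same route as the paper: assume $\gamma<d(\h_0\mid G)<1-\gamma$, invoke \cref{lem:induced counting} with $\V=\V_{d+1}$ (noting that $r=2(d+1)+2^{(d+1)^2}$ is exactly its vertex count and that the parameters $\eta,\psi$ match), and derive a contradiction to the VC$_2$ assumption. Your extra remark on why the induced copy is in fact tripartitely induced is a nice bit of care that the paper leaves implicit.
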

\begin{proof}
	Suppose for contradiction that $\gamma < d(\h_0\mid G)< 1-\gamma$. Then we are precisely in the setting of \cref{lem:induced counting}, which implies that the $r$-vertex tripartite $3$-graph $\V_{d+1}$ is an induced subhypergraph of $\h_0$. That is, the VC$_2$ dimension of $\h_0$ is at least $d+1$, a contradiction.
\end{proof}

Combining \cref{thm:hyper cylinder,lem:all cylinders 01}, we obtain the following result, which is a version of \cref{thm:main UB informal} for cylinder partitions.
\begin{lemma}\label{lem:cylinder partition 01}
	Let $d\geq 1, t \geq 3$ be integers and let $\gamma\in (0,\frac 1{100}]$ be a parameter.
	Let $\h$ be a sufficiently large $3$-graph with VC$_2$ dimension at most $d$, and let $X_1 \sqcup \dots \sqcup X_t$ be an arbitrary equitable partition of $V(\h)$. There is a cylinder chain partition $\p=(\p_V,\p_E)$ of $\h$ such that the following holds. If $(x_1,\dots,x_t) \in X_1 \dtimes X_t$ is sampled uniformly at random, then with probability at least $1-\gamma^2$, we have that
	\[
		d(\h[\p_V[x_1,\dots,x_t]; \p_E[x_1,\dots,x_t];\{i,j,k\}]) \in [0,\gamma] \cup [1-\gamma,1]
	\]
	for all $1\leq i<j<k\leq t$.
	Moreover,
	\[
		\ab{\p_V} \leq \twr \left( \left(\frac{t}{\gamma}\right)^C \right) \qquad \text{ and } \qquad \ab{\p_E} \leq \twr \left( \left(\frac{t}{\gamma}\right)^C \right),
	\]
	where $C>0$ is a constant depending only on $d$.
\end{lemma}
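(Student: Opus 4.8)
The plan is to read the required partition directly off the hypergraph cylinder regularity lemma (\cref{thm:hyper cylinder with delta}) and then use the bounded‑VC$_2$‑dimension hypothesis, in the form of \cref{lem:all cylinders 01}, to upgrade quasirandomness to near‑homogeneity on each tripartite subchain; the only real work is choosing the parameters so that the output of the former matches the input of the latter, and handling cylinders that are degenerate in some coordinate. Concretely, set $r = 2(d+1) + 2^{(d+1)^2}$ (the number of vertices of $\V_{d+1}$), and let $\eta_0 = 10^{-16}\gamma^{10r^3}$ and $\psi_0(x) = 10^{-20}\gamma^{10r^3}x^{16r^2}$ be exactly the thresholds appearing in \cref{lem:induced counting,lem:all cylinders 01}. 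Since $d\ge 1$ forces $r\ge 20$ and hence $16r^2 \ge 28$, and $\gamma \le 1/100$ forces $10^{-20}\gamma^{10r^3} \le 2^{-100}$, the function $\psi_0$ is an increasing polynomial with $\psi_0(x) \le 2^{-100}x^{28}$ on $(0,1)$; thus \cref{thm:hyper cylinder with delta} applies with $t$, $\eta_0$, $\psi_0$ to $\h$ viewed as a $t$‑partite $3$‑graph on $X_1 \sqcup \dots \sqcup X_t$. It produces a cylinder chain partition $\p = (\p_V, \p_E)$ with $\h$ being $(\eta_0,\psi_0)$‑quasirandom relative to $\p$, with the tower bounds on $\ab{\p_V},\ab{\p_E}$, and a constant $\delta_* > 0$ depending only on $t,\gamma,d$ (and, without loss, $\delta_* \le 1/100$) such that a $(1-\eta_0)$‑fraction of tuples lie in a chain of product density at least $\delta_*$. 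Here one uses that $10^5 t^6/\eta_0^3$ and $\psi_0(\eta_0/t^2)^{-20}$ are each bounded by a polynomial in $t/\gamma$ whose exponent depends only on $d$, so the bound $\twr(10^5t^6/\eta_0^3, \psi_0(\eta_0/t^2)^{-20})$ collapses to $\twr((t/\gamma)^C)$ via $\twr(h,x) \le \twr(h + \lceil x \rceil)$, and likewise for $\ab{\p_E}$.

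Next I would isolate the \emph{bad} tuples $(x_1,\dots,x_t)$, namely those for which (i) the cylinder $Y = \p_V[x_1,\dots,x_t]$ has $\ab{Y(X_\ell)} < N_0 := \delta_*^{-r^2}\gamma^{-r^3}$ for some $\ell$, (ii) the $t$‑partite chain $\h[\p_V[x_1,\dots,x_t]; \p_E[x_1,\dots,x_t]]$ is not $(\eta_0,\psi_0)$‑quasirandom, or (iii) that chain has product density below $\delta_*$. Events (ii) and (iii) have probability at most $\eta_0$ each by the conclusion of \cref{thm:hyper cylinder with delta}. For (i), a cylinder with $\ab{Y(X_\ell)} < N_0$ is hit with probability at most $\ab{Y(X_\ell)}/\ab{X_\ell} < 2tN_0/n$ (using equitability, so $\ab{X_\ell} \ge n/(2t)$), and summing over the at most $\ab{\p_V}$ cylinders gives $\Pr[(\mathrm i)] \le 2t\ab{\p_V}N_0/n < \gamma^2/2$ provided $n = \ab{V(\h)}$ exceeds a threshold depending only on $t,\gamma,d$ (through $\ab{\p_V}$ and $N_0$) --- this is precisely the meaning of ``$\h$ sufficiently large''. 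Since $4\eta_0 \le \gamma^2$, the probability that a random tuple is bad is at most $\gamma^2$.

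Finally I would show every good tuple has the stated property. Fix a good tuple $(x_1,\dots,x_t)$ and indices $1 \le i<j<k\le t$, and consider the tripartite subchain $(G',\h') = \h[\p_V[x_1,\dots,x_t]; \p_E[x_1,\dots,x_t]; \{i,j,k\}]$ with $U_1,U_2,U_3 = Y(X_i),Y(X_j),Y(X_k)$. The hypotheses of \cref{lem:all cylinders 01} (with $\delta = \delta_*$) hold: $\delta(G') \ge \delta(\p_E[x_1,\dots,x_t]) \ge \delta_*$ since the remaining $\binom t2-3$ pair‑densities are at most $1$; $\h'$ is $(\eta_0,\psi_0)$‑quasirandom relative to $G'$, inherited from $(\eta_0,\psi_0)$‑quasirandomness of the full $t$‑partite chain together with $\psi_0(\delta(\p_E[x_1,\dots,x_t])) \le \psi_0(\delta(G'))$ (monotonicity of $\psi_0$); and $\min_\ell \ab{U_\ell} \ge N_0 = \delta_*^{-r^2}\gamma^{-r^3}$. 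The one remaining hypothesis --- that $\h'$ has VC$_2$ dimension at most $d$ --- is not automatic, because VC$_2$ dimension need not be monotone under deleting edges; but it holds here, since a tripartitely‑induced copy of $\V_{d+1}$ in $\h'$ of the type produced by the induced counting lemma behind \cref{lem:all cylinders 01} has all of its cross‑triples inside $\Delta(G')$, where $\h'$ and $\h$ agree, and hence is a tripartitely‑induced copy of $\V_{d+1}$ in $\h$, contradicting VC$_2$ dimension $\le d$. Therefore \cref{lem:all cylinders 01} gives $d(\h[\p_V[x_1,\dots,x_t]; \p_E[x_1,\dots,x_t]; \{i,j,k\}]) \in [0,\gamma] \cup [1-\gamma,1]$ for all $i<j<k$, which is the asserted conclusion on the $(1-\gamma^2)$‑fraction of good tuples.

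Since this lemma is essentially an assembly of \cref{thm:hyper cylinder with delta,lem:all cylinders 01}, I do not expect a serious obstacle; the two points needing genuine care are exactly those flagged above: bounding the measure of tuples lying in coordinate‑degenerate cylinders, so that the counting‑lemma size hypothesis $\min_\ell\ab{U_\ell}\ge N_0$ can be enforced off a $\gamma^2$‑set, and justifying that the restricted subchain $\h'$ legitimately inherits the VC$_2$‑dimension bound despite the failure of monotonicity under edge deletion (one must use that the relevant induced copy is confined to the triangles of the cylinder graph). Everything else is routine substitution and the tower‑function absorption estimate.
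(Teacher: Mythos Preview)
Your proposal is correct and follows essentially the same approach as the paper: choose $r,\eta_0,\psi_0$ to match the thresholds of \cref{lem:all cylinders 01}, apply \cref{thm:hyper cylinder with delta}, and then invoke \cref{lem:all cylinders 01} on each triple of parts in the good cylinders, using ``$\h$ sufficiently large'' to dispose of small-coordinate cylinders. You are in fact more careful than the paper on the VC$_2$-inheritance point---the paper simply asserts that a tripartitely induced copy of $\V_{d+1}$ in the restricted chain transfers to $\h$, whereas you correctly note that this needs the extra feature (guaranteed by the counting lemma behind \cref{lem:all cylinders 01}) that all cross-pairs of the copy lie in $\Delta(G')$.
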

Note that in this statement, we are simply ignoring all edges of $\h$ which do not transverse the partition $X_1 \sqcup \dots \sqcup X_t$.
\begin{proof}[Proof of \cref{lem:cylinder partition 01}]
	Let $r = 2(d+1)+2^{(d+1)^2}$, let $\eta = 10^{-16}\gamma^{10r^3}$ and $\psi(x) = 10^{-20} \gamma^{10r^3}x^{16r^2}$. We apply \cref{thm:hyper cylinder with delta} to $\h$ to obtain a cylinder chain partition $\p$ such that $\h$ is $(\eta,\psi)$-quasirandom relative to $\p$. We also obtain some constant $\delta>0$ depending only on $t,\eta$ and $\psi$ such that a $(1-\eta)$-fraction of the tuples lie in a chain with product density at least $\delta$. Additionally, the claimed bounds on $\ab{\p_V}$ and $\ab{\p_E}$ follow immediately from \cref{thm:hyper cylinder with delta}.

	Now, as long as $\ab{V(\h)}$ is sufficiently large, we conclude that at least a $(1-3\eta)$-fraction of the tuples $(x_1,\dots,x_t)$ lie in a cylinder that is $(\eta,\psi)$-quasirandom, has product density at least $\delta$, and all $t$ of its vertex parts have size at least $\delta^{-r^2}\gamma^{-r^3}$. We now apply \cref{lem:all cylinders 01} to any triple of parts in this cylinder. The only thing to note is that this tripartite hypergraph has no tripartitely induced copy of $\V_{d+1}$, as such a copy is also tripartitely induced in $\h$ (recall that for tripartite inducedness, we only care about edges going between all three parts, and thus the additional edges $\h$ may have are irrelevant). We may thus apply \cref{lem:all cylinders 01}. We conclude that the relative density in this chain is at most $\gamma$ or at least $1-\gamma$. The claimed result now follows since $3\eta\leq \gamma^2$.
\end{proof}
With \cref{lem:cylinder partition 01} in hand, our goal is now to convert this cylinder chain partition into a genuine chain partition, while preserving the property that almost all chains have relative density close to $0$ or $1$. We do this following the outline presented in \cref{sec:our approach sketch}, and the remainder of the proof is dedicated to this step.

Our next lemma, which is a simple consequence of Markov's inequality, states that when we partition a quasirandom chain with relative density very close to $0$ or $1$, most subchains also have edge density close to $0$ or $1$.
\begin{lemma}\label{lem:arbitrary partition keeps 01}
	Let $G$ be a tripartite graph on parts $A,B,C$, and let $\h_0$ be a tripartite $3$-graph supported on $\Delta(G)$.
	Let $\gamma>0$ be a parameter.

	Suppose that $A,B,C$ are partitioned into $A_1,\dots,A_K, B_1,\dots,B_K,C_1,\dots,C_K$, respectively. Suppose too that each bipartite graph $G[A_i \times B_j], G[A_i \times C_j], G[B_i \times C_j]$ is further partitioned into $K$ subgraphs.

	Suppose that, among all triangles in $G$, a triangle $(x,y,z)$ is sampled uniformly at random. If $d(\h_0\mid G)<\gamma$, then with probability at least $1-\sqrt \gamma$, the unique subchain $G_\mu[A_i \times B_j] \cup G_\nu[A_i \times C_k] \cup G_\xi[B_j \times C_k]$ that contains $(x,y,z)$ satisfies
	\[
		d(\h_0 \mid G_\mu[A_i \times B_j] \cup G_\nu[A_i \times C_k] \cup G_\xi[B_j \times C_k]) < \sqrt \gamma.
	\]
	Similarly, if $d(\h_0\mid G)>1-\gamma$, then with probability at least $1-\sqrt \gamma$, we have
	\[
		d(\h_0\mid G_\mu[A_i \times B_j] \cup G_\nu[A_i \times C_k] \cup G_\xi[B_j \times C_k]) >1- \sqrt \gamma.
	\]
\end{lemma}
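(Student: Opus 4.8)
The plan is to derive the lemma from a single averaging (Markov-type) argument, after the observation that the stated refinement of $G$ partitions its triangle set $\Delta(G)$ into the triangle sets of the resulting subchains. Concretely, I would first record the following: every triangle $(x,y,z)\in\Delta(G)$, say with $x\in A$, $y\in B$, $z\in C$, lies in exactly one of the subchains under consideration. Indeed, the indices $i,j,k$ are forced by $x\in A_i$, $y\in B_j$, $z\in C_k$, and then $\mu,\nu,\xi$ are forced by requiring the edges $xy$, $xz$, $yz$ to lie in $G_\mu[A_i\times B_j]$, $G_\nu[A_i\times C_k]$, $G_\xi[B_j\times C_k]$ respectively; conversely each such subchain $G'$ is a subgraph of $G$, so $\Delta(G')\subseteq\Delta(G)$, and the triangle assigned to $G'$ does lie in $\Delta(G')$. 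Hence, writing $G'$ for a generic subchain, $\Delta(G)$ is the disjoint union of the sets $\Delta(G')$, and therefore (using the convention $0/0=0$, so that empty subchains contribute $0$)
\[
	\ab{E(\h_0)\cap\Delta(G)} \;=\; \sum_{G'} \ab{E(\h_0)\cap\Delta(G')} \;=\; \sum_{G'} d(\h_0\mid G')\,\ab{\Delta(G')}.
\]

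Next, assume $d(\h_0\mid G)<\gamma$, i.e.\ $\ab{E(\h_0)\cap\Delta(G)}<\gamma\ab{\Delta(G)}$, and call a subchain $G'$ \emph{bad} if $d(\h_0\mid G')\geq\sqrt\gamma$. Restricting the identity above to bad subchains and bounding $d(\h_0\mid G')\geq\sqrt\gamma$ gives
\[
	\sqrt\gamma \sum_{G'\text{ bad}} \ab{\Delta(G')} \;\leq\; \sum_{G'\text{ bad}} d(\h_0\mid G')\,\ab{\Delta(G')} \;\leq\; \ab{E(\h_0)\cap\Delta(G)} \;<\; \gamma\,\ab{\Delta(G)},
\]
so $\sum_{G'\text{ bad}}\ab{\Delta(G')}<\sqrt\gamma\,\ab{\Delta(G)}$. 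Since the bad subchains' triangle sets are disjoint subsets of $\Delta(G)$, a uniformly random triangle of $G$ lies in a bad subchain with probability exactly $\bigl(\sum_{G'\text{ bad}}\ab{\Delta(G')}\bigr)/\ab{\Delta(G)}<\sqrt\gamma$; equivalently, with probability at least $1-\sqrt\gamma$ the (unique) subchain $G'$ containing it has $d(\h_0\mid G')<\sqrt\gamma$, which is the first assertion.

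For the second assertion I would apply the first to the complementary $3$-graph $\bar\h_0$ with edge set $\Delta(G)\setminus E(\h_0)$, which is again supported on $\Delta(G)$. Since $E(\h_0)\subseteq\Delta(G)$, for every subchain $G'$ we have $\ab{E(\bar\h_0)\cap\Delta(G')}=\ab{\Delta(G')}-\ab{E(\h_0)\cap\Delta(G')}$, hence $d(\bar\h_0\mid G')=1-d(\h_0\mid G')$, and likewise $d(\bar\h_0\mid G)=1-d(\h_0\mid G)<\gamma$; applying the first part to $\bar\h_0$ then gives that with probability at least $1-\sqrt\gamma$ the subchain $G'$ containing the random triangle satisfies $d(\bar\h_0\mid G')<\sqrt\gamma$, i.e.\ $d(\h_0\mid G')>1-\sqrt\gamma$. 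The only points needing care are the bookkeeping that the refinement genuinely partitions $\Delta(G)$ and the handling of the $0/0$ convention for empty subchains (and the trivial degenerate case $\Delta(G)=\varnothing$, where the statement is vacuous); once these are in place the argument is just the Markov computation above, so I do not expect any real obstacle.
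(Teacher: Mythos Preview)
Your proposal is correct and follows essentially the same approach as the paper: observe that the refinement partitions $\Delta(G)$ into the triangle sets of the subchains, apply a Markov-type averaging argument to bound the mass of triangles in ``bad'' subchains, and handle the dense case by passing to the relative complement $\bar\h_0$. The only difference is cosmetic---you write out the Markov computation explicitly (and are slightly more careful about degenerate cases), whereas the paper frames it as computing $\E[d_\ell]$ and invoking Markov's inequality.
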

\begin{proof}
	Every triangle in $G$ is contained in precisely one subgraph of the form $(A_i \cup B_j \cup C_k, G_\mu \cup G_\nu \cup G_\xi)$. Therefore, these partitions of the vertex and edge sets
	induce a corresponding partition of $\Delta(G)$ into a collection of subsets, say $\Delta(G) = \Delta_1 \sqcup \dotsb \sqcup \Delta_M$. The uniform distribution on $\Delta(G)$ is the same as first picking an index $\ell \in [M]$ at random, with probability proportional to $\ab{\Delta_\ell}$, and then picking a uniformly random element of $\Delta_\ell$. Denote by $d_\ell$ the relative density $d(\h_0 \mid G_\mu[A_i \times B_j] \cup G_\nu[A_i \times C_k] \cup G_\xi[B_j \times C_k])$, where $A_i,B_j,C_k, G_\mu, G_\nu, G_\xi$ are the subchain defining $\Delta_\ell$. We then have that $d(\h_0\mid G)$ is equal to the weighted average of $d_\ell$, where the weight of $d_\ell$ is proportional to $\ab{\Delta_\ell}$; more precisely,
	\[
		d(\h_0\mid G) = \sum_{\ell=1}^M \frac{\ab{\Delta_\ell}}{\ab{\Delta(G)}} d_\ell.
	\]
	Said differently, in the random experiment above, the expected value of $d_\ell$ is exactly $d(\h_0\mid G)$, which we assumed is less than $\gamma$. By Markov's inequality, we conclude that
	\[
		\pr(d_\ell \geq \sqrt \gamma)\leq \frac{\E[d_\ell]}{\sqrt \gamma} < \frac{\gamma}{\sqrt \gamma} = \sqrt\gamma.
	\]
	This is precisely the first claimed statement.

	For the second statement, we simply replace $\h_0$ by its relative complement (namely the hypergraph whose hyperedges are precisely those triangles in $G$ which do not form a hyperedge of $\h_0$), and the result follows from the first statement.
\end{proof}

We will now begin working with chain partitions of hypergraphs, which are true chain partitions rather than partitions into cylinders. To minimize confusion, we use the letter $\Q$ for such objects, as in the introduction.

\begin{definition}
	Let $\p = (\p_V,\p_E)$ be a cylinder chain partition of a $t$-partite hypergraph $\h$ on vertex set $X_1 \sqcup \dotsb \sqcup X_t$. The \emph{Venn diagram partition} of $\p$ is the chain partition $\Q=\Q(\p)$ defined as follows. First, two vertices $x_i,x_i' \in X_i$ are in the same part of $\Q_V$ if and only if, for all $(x_1,\dots,x_{i-1},x_{i+1},\dots,x_t) \in X_1 \dtimes X_{i-1}\times X_{i+1} \dtimes X_t$, and for all $Y \in \p_V$, we have
	\[
		(x_1,\dots,x_{i-1},x_i,x_{i+1},\dots,x_t) \in Y \Longleftrightarrow (x_1,\dots,x_{i-1},x_i',x_{i+1},\dots,x_t) \in Y.
	\]
	That is, $x_i,x_i'$ are in the same part of $\Q_V$ if and only if they participate in exactly the same cylinders in $\p_V$.

	Now, given parts $Y_i \subseteq X_i, Y_j \subseteq X_j$ of $\Q_V$, we define the edge partition $\Q_E(Y_i \times Y_j)$ as follows. Two edges $(x_i,x_j), (x_i',x_j') \in Y_i \times Y_j$ lie in the same part of $\Q_E(Y_i \times Y_j)$ if and only if they participate in all of the same edge parts of $\p_E(Y)$, over all vertex cylinders $Y \in \p_V$ which contain $Y_i,Y_j$.
\end{definition}
Note that, if $\Q$ is the Venn diagram partition of $\p$, then $\ab{\Q_V} \leq t 2^{\ab{\p_V}}$, since, for each part $X_i$, there are at most $2^{\ab{\p_V}}$ possible intersection patterns of cylinders in $\p_V$ with $X_i$. Similarly, $\ab{\Q_E(Y_i \times Y_j)} \leq 2^{\ab{\p_E}\ab{\p_V}}$ for every $Y_i,Y_j \in \Q_V$, since there are at most $\ab{\p_V}$ options for a cylinder $Y \in \p_V$ containing $Y_i,Y_j$, and at most $\ab{\p_E}$ bipartite graphs in $\p_E(Y)$, and thus at most $2^{\ab{\p_E}\ab{\p_V}}$ possible intersection patterns with $Y_i \times Y_j$.

Our next lemma shows that by taking the Venn diagram partition of the cylinder chain partition given by \cref{lem:cylinder partition 01}, we can obtain a true chain partition such that almost all triples have relative density close to $0$ or $1$.
\begin{lemma}\label{lem:true partition 01}
	Let $d\geq 1, t \geq 3$ be integers and let $\zeta \in (0,\frac 1{10}]$ be a parameter.
	Let $\h$ be a sufficiently large $3$-graph with VC$_2$ dimension at most $d$, and let $X_1 \sqcup \dots \sqcup X_t$ be an arbitrary equitable partition of $V(\h)$. There is a chain partition $\Q=(\Q_V,\Q_E)$ of $\h$ such that the following holds for all $1 \leq i<j<k\leq t$. If $(x_i,x_j,x_k) \in X_i \times X_j \times X_k$ is sampled uniformly at random, then with probability at least $1-3\zeta$, we have that
	\[
		d(\h[\Q;x_i,x_j,x_k]) \in [0,\zeta] \cup [1-\zeta,1].
	\]
	Moreover,
	\[
		\ab{\Q_V} \leq \twr \left( \left(\frac{t}{\zeta}\right)^C \right) \qquad \text{ and } \qquad \ab{\Q_E(Y\times Y')} \leq \twr \left( \left(\frac{t}{\zeta}\right)^C \right)
	\]
	for every $Y,Y' \in \Q_V$, where $C>0$ is a constant depending only on $d$.
\end{lemma}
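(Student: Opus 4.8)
The plan is to apply \cref{lem:cylinder partition 01} with the parameter $\gamma\coloneqq\zeta^2$ (which lies in $(0,\frac1{100}]$ since $\zeta\leq\frac1{10}$) to the given equitable partition $X_1\sqcup\dots\sqcup X_t$, obtaining a cylinder chain partition $\p=(\p_V,\p_E)$, and then to take $\Q\coloneqq\Q(\p)$ to be its Venn diagram partition. The size bounds are then immediate: by the estimates recorded just after the definition of the Venn diagram partition we have $\ab{\Q_V}\leq t\,2^{\ab{\p_V}}$ and $\ab{\Q_E(Y\times Y')}\leq 2^{\ab{\p_E}\ab{\p_V}}$, while \cref{lem:cylinder partition 01} gives $\ab{\p_V},\ab{\p_E}\leq\twr((t/\gamma)^{C_1})$ for some $C_1$ depending only on $d$; since exponentiating a tower raises its height by only a bounded amount, both quantities remain of the form $\twr((t/\zeta)^{C})$ for a suitable constant $C$ depending only on $d$.

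The substance of the proof is the homogeneity property. I would fix $1\leq i<j<k\leq t$ and sample $(x_1,\dots,x_t)\in X_1\dtimes X_t$ uniformly at random; the marginal distribution of $(x_i,x_j,x_k)$ is uniform on $X_i\times X_j\times X_k$, and $d(\h[\Q;x_i,x_j,x_k])$ depends only on $x_i,x_j,x_k$, so it suffices to bound the bad probability in this coupled experiment. Let $\mathcal D\coloneqq\h[\p_V[x_1,\dots,x_t];\p_E[x_1,\dots,x_t];\{i,j,k\}]$ be the tripartite chain that \cref{lem:cylinder partition 01} controls; by that lemma, $d(\mathcal D)\in[0,\gamma]\cup[1-\gamma,1]$ with probability at least $1-\gamma^2$. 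The structural point driving the argument is that $\Q$ refines $\p$ along the sampled tuple: writing $Y\coloneqq\p_V[x_1,\dots,x_t]$, the part $\Q_V[x_i]$ is contained in $Y(X_i)$ (because each $Y(X_i)$ is a union of parts of $\Q_V$, by construction of the Venn diagram partition), and likewise $\Q_E[x_i,x_j]$ is contained in the bipartite graph of $\p_E[x_1,\dots,x_t]$ between classes $i$ and $j$ (since all edges of a single $\Q_E$-part lie in one common part of $\p_E(Y)$). It follows that the graph and hypergraph of the chain $\mathcal D$ each carry a natural partition — the three vertex classes by the $\Q_V$-parts they contain, and each of the three bipartite edge graphs by the $\Q_E$-parts it contains — and that the subchain of $\mathcal D$ selected by the triangle $(x_i,x_j,x_k)$, in the sense of \cref{lem:arbitrary partition keeps 01}, is precisely $\h[\Q;x_i,x_j,x_k]$.

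Granting this, I would condition on $\mathcal D=C$ for each fixed chain $C$ of positive probability. That event is the intersection of $\{(x_i,x_j,x_k)\in\Delta(C)\}$ with the event $\{x_\ell\in Y(X_\ell)\text{ for every }\ell\notin\{i,j,k\}\}$, and these involve disjoint sets of coordinates, so conditionally $(x_i,x_j,x_k)$ is uniform on $\Delta(C)$. Applying \cref{lem:arbitrary partition keeps 01} to $C$ (with its parameter taken to be our $\gamma$; the precise numbers of parts are irrelevant to its conclusion): if $d(C)<\gamma$ then, conditionally, $d(\h[\Q;x_i,x_j,x_k])<\sqrt\gamma$ except with probability at most $\sqrt\gamma$, and symmetrically if $d(C)>1-\gamma$. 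Averaging over $C$ and recalling $\sqrt\gamma=\zeta$ now gives $d(\h[\Q;x_i,x_j,x_k])\in[0,\zeta]\cup[1-\zeta,1]$ with probability at least $1-\gamma^2-2\sqrt\gamma\geq 1-3\zeta$, as required. I expect the main obstacle to be exactly the bookkeeping in this last stretch: verifying that the \cref{lem:arbitrary partition keeps 01}-subchain of $\mathcal D$ really coincides with $\h[\Q;x_i,x_j,x_k]$, and that conditioning on $\mathcal D=C$ produces precisely the uniform distribution on $\Delta(C)$. Once these are settled, the rest is a union bound over $C$ together with the Markov-type estimate supplied by \cref{lem:arbitrary partition keeps 01}.
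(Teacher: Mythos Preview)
Your proposal is correct and follows essentially the same route as the paper: apply \cref{lem:cylinder partition 01} with $\gamma=\zeta^2$, take the Venn diagram partition, and use \cref{lem:arbitrary partition keeps 01} chain by chain (the paper inserts an extra, and dispensable, Markov step defining ``typical'' cylinders before reaching this point, whereas you condition directly on $\mathcal D$). One small wrinkle: the event $\{\mathcal D=C\}$ need not have the product form you describe, since distinct cylinders $Y\in\p_V$ can share the same $(i,j,k)$-projections and edge parts and hence induce the same chain $C$; however, conditioning further on $Y$ restores your product description, and since each such conditional yields the uniform law on $\Delta(C)$, so does their mixture.
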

\begin{proof}
	Apply \cref{lem:cylinder partition 01} with parameter $\gamma=\zeta^2$ to obtain a cylinder chain partition $\p$, and let $\Q$ be the Venn diagram partition of $\p$. Note that the claimed bounds on $\ab{\Q_V},\ab{\Q_E(Y\times Y')}$ follow immediately from the corresponding bounds on $\ab{\p_V},\ab{\p_E}$.

	For the rest of the proof, we fix some $1 \leq i<j<k\leq t$. For a vertex cylinder $Y \in \p_V$, we define its \emph{homogeneous fraction} $h(Y)$ to be the probability, for uniformly random $(y_1,\dots,y_t) \in Y$, that $d(\h[\p_V[y_1,\dots,y_t];\p_E[y_1,\dots,y_t;\{i,j,k\}]]) \in [0,\gamma]\cup [1-\gamma,1]$. Equivalently, $h(Y)$ is the fraction of triples $(y_i,y_j,y_k) \in Y_i \times Y_j \times Y_k$ which define a tripartite chain whose relative density is in $[0,\gamma]\cup [1-\gamma,1]$, i.e.\
	\[
		h(Y) = \sum_{\substack{Z_{ij} \in \p_E(Y_i \times Y_j)\\Z_{ik} \in \p_E(Y_i \times Y_k)\\Z_{jk} \in \p_E(Y_j \times Y_k)}} \frac{\ab{\Delta(Z_{ij}\cup Z_{ik} \cup Z_{jk})}}{\ab{Y_i}\ab{Y_j}\ab{Y_k}} \bm 1_{d(\h \mid Z_{ij} \cup Z_{ik}\cup Z_{jk})\in [0,\gamma]\cup [1-\gamma,1]}.
	\] 
	Let us say that a vertex cylinder $Y \in \p_V$ is \emph{typical} if $h(Y) \geq 1-\gamma$. 
	
	To justify this name, we first claim that a $(1-\gamma)$-fraction of the tuples in $X_1\dtimes X_t$ lie in typical vertex cylinders. Indeed, \cref{lem:cylinder partition 01} states that if we sample $(x_1,\dots,x_t) \in X_1\dtimes X_t$ uniformly, then with probability at least $1-\gamma^2$, we have
	$d(\h[\p_V[x_1,\dots,x_t];\p_E[x_1,\dots,x_t;\{i,j,k\}]]) \in [0,\gamma]\cup [1-\gamma,1]$. But this probability is exactly the weighted average of $h(Y)$ over all cylinders $Y$, namely
	\[
		\sum_{Y \in \p_V} \left(\prod_{i=1}^t \frac{\ab{Y_i}}{\ab{X_i}}\right) h(Y) \geq 1-\gamma^2.
	\]
	By Markov's inequality, we find that if $(x_1,\dots,x_t) \in X_1\dtimes X_t$ is sampled uniformly at random, then with probability at least $1-\gamma$ it lies in a typical vertex cylinder. 

	Now, let $Y$ be a typical vertex cylinder.
	By the definition of typicality, a $(1-\gamma)$-fraction of the triples in $Y_i \times Y_j \times Y_k$ define edge parts $Z_{ij},Z_{ik},Z_{jk}$ for which the relative density is in $[0,\gamma]\cup [1-\gamma,1]$. Let us call such tuples $(Y_i \cup Y_j \cup Y_k, Z_{ij}\cup Z_{ik}\cup Z_{jk})$ \emph{doubly typical}. What we have now found is that, if $(x_1,\dots,x_t)$ is sampled uniformly at random, then it lies in a doubly typical chain with probability at least $1-2\gamma$. 

	Finally, we fix a doubly typical tuple $(Y_i \cup Y_j \cup Y_k, Z_{ij}\cup Z_{ik}\cup Z_{jk})$. By definition, $\Q_V$ and $\Q_E$ give some further partition of the vertex and edge set of this tripartite graph. By \cref{lem:arbitrary partition keeps 01}, we conclude that after this partitioning, a $(1-\sqrt \gamma)$-fraction of the triples of vertices in this tripartite graph still define tripartite subgraphs on which $\h$ has relative density in $[0,\sqrt \gamma]\cup [1-\sqrt \gamma,1]=[0,\zeta]\cup [1-\zeta,1]$. 

	Putting this all together, we find that if $(x_1,\dots,x_t) \in X_1 \dtimes X_t$ is sampled uniformly at random, the probability that $d(\h[\Q;x_i,x_j,x_k]) \in [0,\zeta] \cup [1-\zeta,1]$ is at least $1-2\gamma-\sqrt \gamma \geq 1-3\zeta$.
\end{proof}
The final result we need before proving \cref{thm:main UB formal} is a version of Szemer\'edi's regularity lemma for simultaneously regularizing multiple graphs, similar to \cref{lem:graph cylinder}, which is the cylinder regularity version of the same statement. This statement is well-known, e.g.\ \cite[Theorem 7.8]{MR2195580}.

Given a chain partition $(\Q_V,\Q_E)$ and a refinement $\Q_V'$ of $\Q_V$, we denote by $\Q_E'$ the edge partition obtained by restricting each bipartite graph in $\Q_E$ to the new vertex parts in $\Q_V$.
\begin{lemma}[{\cite[Theorem 7.8]{MR2195580}}]\label{lem:szemeredi for multiple graphs}
	Let $\alpha>0$, and let $\Q=(\Q_V,\Q_E)$ be a chain partition of some vertex set $V$. Let $T = \ab{\Q_V}$, and let $L$ be the maximum of $\ab{\Q_E(Y \times Y')}$ over all distinct $Y,Y' \in \Q_V$. There is a refinement $\Q_V'$ of $\Q_V$ such that a $(1-\alpha)$-fraction of pairs of vertices $(x_1,x_2)$ define an $\alpha$-quasirandom graph $\Q_E'[x_1,x_2]$. Moreover,
	\[
		\ab{\Q_V'} \leq \twr (16\alpha^{-3} T^2 L)
	\]
	and
	\[
		\ab{\Q_E'(Y\times Y')} \leq L
	\]
	for all $Y,Y' \in \Q_V'$.
\end{lemma}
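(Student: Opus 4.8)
The plan is to establish this as the standard ``simultaneous'' (multicolored) version of Szemer\'edi's regularity lemma, via the classical energy-increment argument. The first step is a reduction. Since a bipartite graph that is $\varepsilon$-regular in the classical ``for all subsets'' sense (from the introduction) is automatically $\varepsilon$-quasirandom in the $C_4$-count sense of \cref{def:graph quasirandomness} --- this is the easy direction of the equivalence discussed after \cref{def:graph quasirandomness}, see \cite[Theorem~3.1]{MR2195580} --- it suffices to produce a refinement $\Q_V'$ of $\Q_V$ such that, for all but an $\varepsilon$-fraction of pairs $(x_1,x_2) \in V^2$, the bipartite graph $\Q_E'[x_1,x_2]$ is $\varepsilon$-regular in the classical sense, for a suitable $\varepsilon = \poly(\alpha)$. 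Note also that the bound $\ab{\Q_E'(Y\times Y')} \leq L$ needs no work: by construction $\Q_E'$ intersects each of the at most $L$ bipartite graphs in $\Q_E$ with the new (smaller) vertex parts, so the number of parts can only decrease.

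Next I would set up the energy. View the data as a family $\mathcal{G}$ of at most $\binom{T}{2} L \leq T^2 L$ bipartite graphs, namely the parts of the edge partitions $\Q_E(Y \times Y')$ over distinct $Y,Y' \in \Q_V$; each $G \in \mathcal{G}$ is supported on a fixed pair of parts $Y_G, Y_G' \in \Q_V$. For a vertex partition $\p$ refining $\Q_V$, define
\[
	q(\p) \coloneqq \sum_{G \in \mathcal{G}}\ \sum_{\substack{W, W' \in \p \\ W \subseteq Y_G,\ W' \subseteq Y_G'}} \frac{\ab W \ab{W'}}{\ab V^2}\, d_G(W, W')^2 .
\]
Since $d_G(W,W') \in [0,1]$ for every $G,W,W'$, one has $0 \leq q(\p) \leq \binom T2 L \leq T^2 L$, and convexity of $x \mapsto x^2$ (equivalently, Cauchy--Schwarz, as in \cref{lem:q properties}) shows that $q$ is monotone under refinement of $\p$. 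I would then run the usual energy increment starting from $\p_0 = \Q_V$. Given $\p_i$, call a triple $(W,W',G)$ with $W,W' \in \p_i$, $W \subseteq Y_G$, $W' \subseteq Y_G'$ \emph{bad} if $G \cap (W \times W')$ is not $\varepsilon$-regular, and call a pair of vertices bad if the triple it determines --- via its $\p_i$-parts and its $\Q_E$-class --- is bad. If at most an $\varepsilon$-fraction of pairs are bad, stop and output $\Q_V' \coloneqq \p_i$. Otherwise, for each bad triple the defect form of the Cauchy--Schwarz inequality supplies subsets of $W$ and of $W'$ witnessing the irregularity; cutting every part $W \in \p_i$ simultaneously by all such witnesses --- of which there are at most $\ab{\p_i}\cdot L$ --- produces a refinement $\p_{i+1}$ with $\ab{\p_{i+1}} \leq \ab{\p_i}\cdot 2^{\ab{\p_i}L} \leq 2^{2\ab{\p_i}L}$. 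Because $q$ is additive over $\mathcal{G}$ and monotone, each bad triple adds at least $\tfrac{\ab W\ab{W'}}{\ab V^2}\varepsilon^2$ to its $G$-summand and these gains accumulate, while the bad pairs carry at least an $\varepsilon$-fraction of the total mass; hence $q(\p_{i+1}) \geq q(\p_i) + \varepsilon^3$. Since $q \leq T^2 L$, the iteration terminates after at most $T^2 L/\varepsilon^3 = \poly(1/\alpha)\cdot T^2 L$ steps, and since each step adds (essentially) one level to the tower, starting from $\ab{\p_0} = T$ one ends with $\ab{\Q_V'} \leq \twr\bigl(O(\alpha^{-c}T^2 L)\bigr)$ for an absolute constant $c$; optimizing the exponents (most cleanly by running the increment directly with the box norm) gives the stated bound $\twr(16\alpha^{-3}T^2 L)$.

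There is no genuine difficulty here --- the argument is textbook --- but two bookkeeping points deserve attention. The first is that the refinement at each step must be performed \emph{simultaneously} over all of the up to $\binom T2 L$ colour classes, so one must verify that the per-step blow-up remains (essentially) one tower level and, more importantly, that the number of iterations depends only on $\alpha$, $T$ and $L$ and not on $\ab V$ --- which is immediate, since the energy $q$ is bounded independently of $\ab V$. The second is keeping careful track of the polynomial relationship between $C_4$-quasirandomness and classical $\varepsilon$-regularity (or, equivalently, running the energy increment directly in the box norm) precisely enough to land on the exponent $3$ in the statement; this is the only place where the constants matter, and it is routine.
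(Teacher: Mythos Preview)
The paper does not give its own proof of this lemma: it is simply quoted as \cite[Theorem~7.8]{MR2195580}. Your outline is the standard energy-increment proof of the multicoloured Szemer\'edi lemma, which is essentially how Gowers proves it (indeed he works directly with the box norm, as you suggest at the end, which is what lands on the exponent~$3$); so your proposal is correct and matches the cited source. One minor remark: your energy is actually bounded by $L$ rather than $T^2L$, since $\sum_{Y,Y'}\ab Y\ab{Y'}/\ab V^2=1$, but this only improves the bound and does not affect correctness.
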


With these tools, we are finally ready to prove \cref{thm:main UB formal}, our regularity lemma for hypergraphs of bounded VC$_2$ dimension.

\begin{proof}[Proof of \cref{thm:main UB formal}]
	Apply \cref{lem:true partition 01} with $\zeta=\eta^2/16$ and $t=3/\zeta$ to obtain a partition $\Q^{\hom}$. The choice of $t$ guarantees that at most a $\zeta$-fraction of the triples of vertices do not go between three different parts. Together with the conclusion of \cref{lem:true partition 01}, this implies that for a $(1-4\zeta)$-fraction of triples of vertices, the chain containing the triple has relative density in $[0,\zeta]\cup [1-\zeta,1]$. Let $L$ be the maximum of $\ab{\Q_E^{\hom}(Y \times Y')}$ over all $Y,Y' \in \Q_V^{\hom}$, so that $L \leq \twr(\eta^{-C_1})$ by \cref{lem:true partition 01}, for a constant $C_1$ depending only on $d$. Similarly, letting $T=\ab{\Q_V}$, we have that $T \leq \twr(\eta^{-C_1})$ by \cref{lem:true partition 01}.

	We now apply \cref{lem:szemeredi for multiple graphs} with parameter $\alpha = \psi((\eta/(9L))^3)$ to obtain a new chain partition $\Q=(\Q_V,\Q_E)$, obtained by refining $\Q_V^{\hom}$ and then restricting the graphs in $\Q_E^{\hom}$ to the new vertex parts. By \cref{lem:szemeredi for multiple graphs}, we find that
	\[
		\ab{\Q_V} \leq \twr (16\alpha^{-3} T^2 L) \leq \twr(\twr(\psi(\eta)^{-C_2}))
	\]
	for a constant $C_2$, and
	\[
		\ab{\Q_E(Y\times Y')} \leq L \leq \twr(\eta^{-C_1})
	\]
	for all $Y,Y' \in \Q_V$. By \cref{lem:szemeredi for multiple graphs}, a $(1-\alpha)$-fraction of pairs of vertices lie in $\alpha$-quasirandom bipartite graphs in $\Q_E$. Call such a bipartite graph \emph{sparse} if its edge density is less than $\eta/(9L)$. At most an $(\eta/9)$-fraction of pairs of vertices $(x,x')$ define a sparse graph $\Q_E[x,x']$. Therefore, if we pick a triple of vertices uniformly at random, then with probability at least $1-3(\eta/9 +\alpha)=1-\eta/3-3\alpha$, they lie in a chain of $\Q$ in which all of the three bipartite graphs are $\alpha$-quasirandom and not sparse. In particular, for every such chain, the corresponding tripartite graph $G$ satisfies $\delta(G) \geq (\eta/(9L))^3$, hence $G$ is $\psi(\delta(G))$-quasirandom by the monotonicity of $\psi$.

	Recall that in $\Q_V^{\hom}$, a $(1-4\zeta)$-fraction of triples lie in chains with relative density in $[0,\zeta]\cup [1-\zeta,1]$. By \cref{lem:arbitrary partition keeps 01}, we conclude that in $\Q_V$, a $(1-4\zeta-\sqrt{\zeta})$-fraction of triples lie in chains with relative density in $[0,\sqrt{\zeta}]\cup [1-\sqrt{\zeta},1]$. Thus, for a random triple, the failure probability is at most
	\[
		\frac \eta 3 + 3 \alpha + 4 \zeta + \sqrt \zeta \leq \eta
	\]
	using our assumption that $\psi(x) \leq x$ and our definitions of $\zeta$ and $\alpha$. This concludes the proof.
\end{proof}

\section{Proof of the lower bound}\label{sec:LB proof}
As mentioned in the introduction, \cref{thm:main LB informal} follows from the following two results of Terry. The first states, in a quantitative form, the fact that quasirandomness for $3$-graphs implies weak quasirandomness. 
This fact is well-known and not hard to prove, see e.g.\ \cite{MR2864650,MR2595699}. However, to the best of our knowledge, \cite[Lemma 5.15]{2404.02024} is the only place in the literature where this reduction is given a quantitative form.
\begin{proposition}[{Terry \cite[Lemma 5.15]{2404.02024}}]\label{prop:strong implies weak}
	There exists an absolute constant $C>0$ such that the following holds for all $\eta>0$ and all $\psi:(0,1) \to (0,1)$ satisfying $\psi(x) \leq x^C$. If a  partition $\Q=(\Q_V,\Q_E)$ of a 3-graph is $(\eta,\psi)$-quasirandom, then the partition $\Q_V$ is weakly $\eta^{1/C}$-quasirandom.
\end{proposition}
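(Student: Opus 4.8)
The plan is to prove \cref{prop:strong implies weak} directly from the definitions, by comparing the two quasirandomness conditions via a few applications of the Cauchy--Schwarz inequality. Recall that $(\eta,\psi)$-quasirandomness of the chain partition $\Q=(\Q_V,\Q_E)$ tells us that for a $(1-\eta)$-fraction of triples $(x_1,x_2,x_3)$, the chain $\h[\Q;x_1,x_2,x_3]=(G,\h')$ has its underlying tripartite graph $G$ being $\psi(\delta(G))$-quasirandom and the hypergraph $\h'$ being $\eta$-quasirandom \emph{relative to} $G$ in the sense of \cref{def:quasirandom chain}. We want to deduce that $\Q_V$ alone, forgetting the edge partition entirely, is weakly $\eta^{1/C}$-quasirandom, i.e.\ that for most triples of vertex parts $(Y_1,Y_2,Y_3)$ (with $Y_\ell = \Q_V[x_\ell]$), the triple is weakly quasirandom as in \cref{def:weak quasirandomness}.

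First I would fix a triple of vertex parts $(Y_1,Y_2,Y_3)$ lying in the ``good'' set guaranteed by $(\eta,\psi)$-quasirandomness; by a union bound / averaging argument, all but an $O(\eta)$-fraction of vertex triples land in such a triple of parts, so it suffices to show that each good triple of parts is weakly $\poly(\eta)$-quasirandom. For such a triple, $Y_1\times Y_2$, $Y_1\times Y_3$, $Y_2\times Y_3$ are each subdivided into bipartite pieces by $\Q_E$, and for most choices of one piece from each pair, the resulting tripartite graph $G$ is $\psi(\delta(G))$-quasirandom and the hyperedges of $\h$ landing in $\Delta(G)$ form an $\eta$-quasirandom (relative to $G$) subset. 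The octahedral-count condition of \cref{def:quasirandom chain}, combined with the graph quasirandomness of $G$ (which, by the $C_4$-count characterization, controls $\Delta(G)$-counts and hence lets us replace the indicator $\1_{xyz\in\Delta(G)}$ by the product of densities up to small error), yields that the \emph{unweighted} octahedral discrepancy of $\h$ restricted to $Y_1\times Y_2\times Y_3$ is small. Summing over the pieces of $\Q_E$ and using convexity (the sum of octahedral counts over a partition is at least the octahedral count of the union, since the octahedron is, up to Cauchy--Schwarz, a ``norming'' configuration) shows that the full triple $(Y_1,Y_2,Y_3)$ has small octahedral discrepancy with respect to $\h$. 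Finally, the standard fact (going back to Chung--Graham--Wilson, and its hypergraph analogue) that small octahedral discrepancy implies the weak quasirandomness box condition, with only a polynomial loss in the parameter, finishes the argument.

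The main obstacle I expect is bookkeeping the passage from the \emph{relative} quasirandomness condition (which is stated with the weight $[d_G(X,Y)d_G(X,Z)d_G(Y,Z)]^4$ and with $f$ supported on $\Delta(G)$) to an \emph{absolute} octahedral-count statement over $Y_1\times Y_2\times Y_3$ that ignores $G$ entirely. This requires using the graph quasirandomness of $G$ to show that $\Delta(G)$ is ``pseudorandomly spread'' inside $Y_1\times Y_2\times Y_3$ --- concretely, that for most pairs of vertices in $Y_1$ (and similarly the other parts) their common $G$-neighbourhoods behave like random subsets of the appropriate densities --- and then Cauchy--Schwarz to absorb the error terms. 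One must be careful that the density factors $d_G$ do not become too small: this is exactly where the hypothesis $\psi(x)\le x^C$ enters, guaranteeing that whenever $G$ fails to be quasirandom enough to run the argument, the product density $\delta(G)$ is already so small that the contribution of $\Delta(G)$ to $Y_1\times Y_2\times Y_3$ is negligible and can be discarded. Since the full details are somewhat tedious but entirely standard, and since we only use this proposition as a black box in the lower-bound derivation, we simply cite Terry's \cite[Lemma 5.15]{2404.02024} rather than reproduce the computation.
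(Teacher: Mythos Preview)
The paper does not give its own proof of this proposition: it is stated with attribution to Terry \cite[Lemma 5.15]{2404.02024} and used as a black box. Your proposal ultimately does the same thing---after sketching an approach, you conclude by citing Terry---so in that sense your treatment matches the paper's.

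Your outline is broadly reasonable, but one step deserves more care if you ever do fill in the details. When you ``sum over the pieces of $\Q_E$'' and invoke the norming property of the octahedron, note that each piece is quasirandom relative to its \emph{own} density $d(\h'\mid G)$, not the global density $d_\h(Y_1,Y_2,Y_3)$. Writing $\1_{E(\h)} - d_\h(Y_1,Y_2,Y_3) = \sum_G f_G + \bigl(\sum_G d(\h'\mid G)\,\1_{\Delta(G)} - d_\h(Y_1,Y_2,Y_3)\bigr)$, the triangle inequality for the octahedral norm handles the first sum, but you also need to show the second term is small. This is where the graph quasirandomness of each $G$ enters (to control $\1_{\Delta(G)}$ against $\delta(G)$), together with the observation that pieces with very small $\delta(G)$ contribute negligibly---which is indeed where $\psi(x)\le x^C$ is used, as you correctly identify. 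With this addition your sketch would go through; alternatively, one can bypass octahedral counts entirely and argue directly on box discrepancy, estimating $e_{\h\cap\Delta(G)}(Y_1',Y_2',Y_3')\approx d(\h'\mid G)\,\delta(G)\,\ab{Y_1'}\ab{Y_2'}\ab{Y_3'}$ for each good $G$ and summing.
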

Second, we need the following result, which states that there exist $3$-graphs of bounded VC$_2$ dimension admitting no weakly quasirandom partitions with fewer than tower-type many parts.
\begin{proposition}[{Terry \cite[Theorem 5.4]{2404.01293}}]\label{prop:terry LB}
	For every sufficiently small $\eta>0$, there are infinitely many $3$-graphs $\h$ with the following properties. $\h$ has VC$_2$ dimension at most $1$, and if $\Q_V$ is a weakly $\eta$-quasirandom partition of $V(\h)$, then $\ab{\Q_V} \geq \twr(\poly(1/\eta))$.
\end{proposition}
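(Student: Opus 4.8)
The plan is to realize the lower bound by \emph{lifting} a bipartite graph that is already known to require tower-type regular partitions into a $3$-graph with a dummy third coordinate. Fix a sufficiently small $\eta>0$. For each $m$ in the infinite family for which Gowers's construction applies, let $G=G_m$ be a bipartite graph on vertex classes $P,Q$ with $\ab P=\ab Q=m$ that is maximally irregular in the sense of Gowers \cite{MR1445389}: every partition of $P\cup Q$ into fewer than $\twr(\varepsilon^{-c_0})$ parts fails to be $\varepsilon$-regular (for an absolute constant $c_0>0$ and a parameter $\varepsilon=\varepsilon(\eta)$ to be chosen polynomially in $\eta$; such bipartite versions of Gowers's lower bound are well known, see the discussion after \cref{qu:main}). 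Take a further set $R$ with $\ab R=10m$, and let $\h_m$ be the $3$-graph on $P\sqcup Q\sqcup R$ whose edge set consists exactly of the transversal triples $\{p,q,r\}$ with $p\in P$, $q\in Q$, $r\in R$ and $pq\in E(G)$. Equivalently, the link of every vertex of $R$ is a copy of $G$, and $\h_m$ has no other edges. These are infinitely many distinct $3$-graphs, and I claim each has VC$_2$ dimension at most $1$ while admitting no weakly $\eta$-quasirandom partition with fewer than $\twr(\poly(1/\eta))$ parts.

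For the VC$_2$ bound, the key point is the rigid ``co-link'' structure of $\h_m$: for a pair $(a,b)$ with $a\in P$, $b\in Q$ one has $\{v:\{a,b,v\}\in E(\h_m)\}$ equal to $R$ if $ab\in E(G)$ and to $\emptyset$ otherwise; for $a\in P$, $b\in R$ it equals $N_G(a)\subseteq Q$; for $a\in Q$, $b\in R$ it equals $N_G(a)\subseteq P$; and for any pair contained in one class it is empty. Hence, if $A=\{a_1,a_2\}$ and $B=\{b_1,b_2\}$ formed a shattered $K_{2,2}$, the four sets $L_{ij}=\{v:\{a_i,b_j,v\}\in E(\h_m)\}$ would each belong to the short menu $\{\emptyset,R\}\cup\{N_G(a_i)\cap P,\ N_G(a_i)\cap Q:i\in\{1,2\}\}$, whose ambient blocks $P,Q,R$ are pairwise disjoint. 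A routine case analysis over the placement of $a_1,a_2,b_1,b_2$ relative to the tripartition shows that such a family of four sets can never have all sixteen of its Venn regions nonempty --- for instance, when all of $A\cup B$ lies in $P\cup Q$ every shattering vertex must come from $R$ and realizes one and the same pattern --- so no $K_{2,2}$ is shattered, and $\h_m$ has VC$_2$ dimension at most $1$ (in fact exactly $1$).

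The heart of the argument is to show that a weakly $\eta$-quasirandom partition $\Q_V$ of $V(\h_m)$ with $K$ parts yields a genuine $\varepsilon'$-regular partition of $G$ into at most $K$ parts with $\varepsilon'\leq\poly(\eta)$; combined with the choice of $G_m$ this forces $K\geq\twr(\poly(1/\eta))$. Since the transversal triples in $P\times Q\times R$ make up a $\Theta(1)$-fraction of all triples, at least a $(1-O(\eta))$-fraction of $(p,q,r)\in P\times Q\times R$ lie in a weakly $\eta$-quasirandom triple of parts of $\Q_V$. Call a part $X$ of $\Q_V$ \emph{$R$-heavy} if $\ab{X\cap R}\geq\tfrac12\ab X$; because $\ab{V(\h_m)\setminus R}=2m$ is a bounded fraction of $\ab{V(\h_m)}$ and of $\ab R$, all but at most a $\tfrac15$-fraction of $R$ lies in $R$-heavy parts, and similarly all but a bounded fraction of $P$ (resp.\ $Q$) lies in parts $X$ with $\ab{X\cap P}$ (resp.\ $\ab{X\cap Q}$) a constant fraction of $\ab X$. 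A Markov-type averaging then shows that for all but an $O(\sqrt\eta)$-fraction of pairs $(p,q)\in P\times Q$ there is an $R$-heavy $r\in R$ with $(\Q_V[p],\Q_V[q],\Q_V[r])$ weakly $\eta$-quasirandom. Fixing such $p,q,r$ and testing \cref{def:weak quasirandomness} with $X_1'\subseteq\Q_V[p]\cap P$, $X_2'\subseteq\Q_V[q]\cap Q$, $X_3'=\Q_V[r]\cap R$ and using $e_{\h_m}(X_1',X_2',X_3')=e_G(X_1',X_2')\ab{X_3'}$, the factor $\ab{X_3'}$ cancels against $R$-heaviness and what remains is precisely a $\poly(\eta)$-regularity statement (with the natural density, obtained by also testing with the full subsets) for the pair $(\Q_V[p]\cap P,\Q_V[q]\cap Q)$ in $G$. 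Thus the partitions of $P$ and of $Q$ induced by $\Q_V$ form a $\poly(\eta)$-regular partition of $G$, so $K\geq\twr(\poly(1/\eta))$, as desired.

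I expect the main obstacle to be precisely the bookkeeping in the last step: the partition $\Q_V$ need not respect the tripartition $P\sqcup Q\sqcup R$, so one must quantify that ``most'' vertices lie in parts that are not too lopsided with respect to the tripartition, and then propagate all the constants carefully so that the regularity parameter inherited by $G$ is a genuine polynomial in $\eta$ rather than a degenerate quantity. Choosing $\ab R$ to be a fixed constant multiple of $\ab P=\ab Q$ (rather than $\ab R\gg\ab P$) is what keeps these constants under control. Everything else --- the VC$_2$ case analysis, the two averaging/Markov steps, and the final appeal to Gowers's bipartite construction \cite{MR1445389} --- is routine, and running the construction over the infinite family of admissible $m$ produces the infinitely many $3$-graphs required by the statement.
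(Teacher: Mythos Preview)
Your proposal is correct and follows essentially the same approach as the paper: the construction is exactly the paper's $n\otimes G$ (\cref{def:n-fold link}), your VC$_2$ case analysis parallels \cref{lem:link bounded VC2}, and your reduction from weak $3$-graph regularity back to graph regularity of $G$ is the same idea the paper attributes to \cite[Lemma 5.3]{2404.01293}. The only cosmetic difference is that you take $\ab R=10m$ rather than $\ab R=m$; as you note, this just affects the bookkeeping constants and makes no structural difference.
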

Note that this lower bound is essentially best possible, since \cref{thm:chung} produces a weakly $\eta$-quasirandom partition with $\twr(\poly(1/\eta))$ parts for \emph{any} $\h$. 
Note too that by combining \cref{prop:strong implies weak,prop:terry LB}, we immediately obtain \cref{thm:main LB informal}.

For completeness, we briefly sketch the proof of \cref{prop:terry LB}, in order to explain where the tower-type bound comes from.
The construction used in the proof is the following simple operation, which turns a bipartite graph into a tripartite $3$-graph. 
\begin{definition}\label{def:n-fold link}
	Let $G$ be a bipartite graph with parts $A,B$, and let $n$ be an integer. The \emph{$n$-fold cone hypergraph} of $G$, denoted $n \otimes G$, is the $3$-graph with parts $A,B,C$, where $\ab C = n$, and with edge set
	\[
		E(n \otimes G) = \{(a,b,c) : ab \in E(G), c \in C\}.
	\]
\end{definition}
In other words, $n \otimes G$ is obtained from $G$ by adding $n$ new vertices, each of which has link precisely equal to $G$. The basic observation about this construction is that $n \otimes G$ has bounded VC$_2$ dimension, regardless of the choice of $G$.
\begin{lemma}\label{lem:link bounded VC2}
	For every bipartite graph $G$ and every integer $n$, the $3$-graph $n \otimes G$ has VC$_2$ dimension at most $1$.
\end{lemma}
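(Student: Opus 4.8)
The plan is to unwind the definitions and show directly that $n\otimes G$ contains no shattered $K_{2,2}$, regardless of the choice of $G$. Write $\h\coloneqq n\otimes G$, with vertex parts $A,B,C$ as in \cref{def:n-fold link}. The whole argument rests on one structural feature of $\h$: every edge of $\h$ is a transversal of the partition $A,B,C$, and, writing such an edge as $\{a,b,c\}$ with $a\in A$, $b\in B$, $c\in C$, it lies in $E(\h)$ if and only if $ab\in E(G)$; in particular, whether $\{a,b,c\}$ is an edge does not depend on which vertex $c\in C$ is used. This is immediate from \cref{def:n-fold link}, and I would record it first.

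Assume for contradiction that $\h$ has a shattered $K_{2,2}$, given by disjoint pairs $\{p_1,p_2\}$ and $\{q_1,q_2\}$ such that for every $S\subseteq\{p_1,p_2\}\times\{q_1,q_2\}$ some witness vertex realizes exactly $S$ as its link on $\{p_1,p_2\}\times\{q_1,q_2\}$. Applying this to the full set $S=\{p_1,p_2\}\times\{q_1,q_2\}$ produces a single vertex $v$ for which all four triples $\{p_i,q_j,v\}$ are edges; since edges are transversals, $p_i$ and $q_j$ lie in distinct parts for all $i,j$, and $v$ lies in the third part in each case. Hence the set of parts occupied by $\{p_1,p_2\}$ and the set of parts occupied by $\{q_1,q_2\}$ are disjoint subsets of the $3$-element set $\{A,B,C\}$, so one of them is a singleton; by the symmetry between the two pairs of a shattered $K_{2,2}$ (edges of $\h$ being unordered), I may assume $p_1,p_2$ lie in a common part. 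Inspecting where $v$ can lie then forces $q_1,q_2$ into a common part as well, distinct from that of the $p$'s, and $v$ into the remaining part $T$; this is the only configuration consistent with the shattering property. The key point is that demanding a single common witness $v$ for the full set $S$ rigidifies the configuration --- just knowing ``$p_i$ and $q_j$ are in different parts for all $i,j$'' is not by itself enough, and this is the only place where a little care is needed.

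Finally I would enumerate the possible links. A witness $w\notin T$ makes no triple $\{p_i,q_j,w\}$ a transversal, so it realizes the empty set. For $w\in T$, I split on which of the three parts is $C$. If $T=C$, then $p_i$ and $q_j$ are always the $A$- and $B$-vertices of $\{p_i,q_j,w\}$, so this triple is an edge iff $p_iq_j\in E(G)$, independently of $w$; thus every such $w$ realizes one fixed set. If $p_1,p_2$ lie in $C$, then $p_i$ is always the $C$-vertex of $\{p_i,q_j,w\}$, so membership depends only on $q_j$ and $w$, forcing the link to be a union of the ``columns'' $\{p_1,p_2\}\times\{q_j\}$; symmetrically, if $q_1,q_2$ lie in $C$ the link is a union of the ``rows'' $\{p_i\}\times\{q_1,q_2\}$. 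In every case the number of realizable links is at most four, which contradicts the need to realize all $16$ subsets of $\{p_1,p_2\}\times\{q_1,q_2\}$. Hence $\h$ has no shattered $K_{2,2}$, and its VC$_2$ dimension is at most $1$. No step here is computationally hard; the only obstacle, such as it is, is the bookkeeping in the case analysis sketched above.
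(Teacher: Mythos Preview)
Your proof is correct and follows essentially the same route as the paper's: reduce to showing there is no shattered $K_{2,2}$, use the tripartite structure (via the witness for the full set $S$) to force each of the two pairs into a single part of the tripartition, and then do a short case analysis to see that far fewer than $16$ links are realizable. The only difference is cosmetic---you index the cases by which part is $T$, while the paper indexes by which two parts hold the $K_{2,2}$---and your justification of the alignment step is in fact more explicit than the paper's one-line assertion.
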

\begin{proof}
	It suffices to prove that $n\otimes G$ has no shattered $K_{2,2}$. Let the parts of $n \otimes G$ be $A,B,C$, as in \cref{def:n-fold link}. As $n \otimes G$ is tripartite, any shattered $K_{2,2}$ must have its two parts contained in two of the parts $A,B,C$.

	Consider first some $a,a' \in A, b,b' \in B$. They clearly cannot form a shattered $K_{2,2}$, since all $c \in C$ have the same link when restricted to these four vertices, whereas in a shattered $K_{2,2}$ we would need to see $16$ different links on these four vertices.

	Next consider some $a,a' \in A, c,c' \in C$. We claim that for every $b \in B$, if $a c \in N(b)$, then $a c' \in N(b)$ as well. Note that this immediately implies that that $a,a',c,c'$ do not form a shattered $K_{2,2}$, as for example we can never see the subgraph $\{(a,c)\} \subseteq \{a,a'\}\times \{c,c'\}$ in the link of some $b \in B$. To prove the claim, suppose that $ac \in N(b)$ for some $b \in B$. By definition of $n \otimes G$, this implies that $ab \in E(G)$, hence that $abc' \in E(n \otimes G)$, and therefore that $ac' \in N(b)$ as well, as claimed.

	The final case, of a shattered $K_{2,2}$ among $B \times C$, is ruled out identically to the previous case, by simply interchanging the roles of $A$ and $B$.
\end{proof}
With this construction in hand, it is straightforward to describe the proof of \cref{prop:terry LB}. We begin by letting $G$ be Gowers's example of a graph admitting no $\eta$-quasirandom partition with fewer than $\twr(\poly(1/\eta))$ parts. We may moreover assume that $G$ is bipartite\footnote{Gowers's \cite{MR1445389} construction does not explicitly provide a balanced bipartite graph $G$, but it is easy to see that his construction can be modified to yield this, and this is also explicitly the case in the construction of Fox--Lov\'asz \cite{MR3737374}.} with both parts of size $n$. Now, we let $\h = n \otimes G$. By \cref{lem:link bounded VC2}, we know that $\h$ has bounded VC$_2$ dimension.

To complete the proof, it suffices to show that $\h$ has no weakly $\eta$-quasirandom partition into fewer than $\twr(\poly(1/\eta))$ parts. While proving this is somewhat long and tedious (see \cite[Lemma 5.3]{2404.01293} for details), the fact that such a statement is true should hopefully not be surprising: 
the $3$-graph $n \otimes G$ contains ``no extra information'' beyond the graph $G$, hence a weakly quasirandom partition of the former should immediately yield a quasirandom partition of the latter. Once one formalizes this intuitive argument, the proof of \cref{prop:terry LB} is complete.

\section{Concluding remarks and open problems}\label{sec:conclusion}

\subsection{Other applications of cylinder regularity for hypergraphs}\label{sec:cylinder apps}
The major technical tool introduced in this paper is the cylinder regularity lemma for hypergraphs, \cref{thm:hyper cylinder}, and we expect this result to have many other applications. We now briefly discuss two such applications, as well as some open problems related to them.

The first of these applications is a hypergraph extension of the quasirandom subset lemma of Conlon and Fox \cite{MR2989432}. For a graph $G$ and a set $U \subseteq V(G)$, we say that $U$ is \emph{$\varepsilon$-quasirandom} if the pair $(U,U)$ is $\varepsilon$-quasirandom. More formally, $U$ is $\varepsilon$-quasirandom if the pair $(U,U)$ is $\varepsilon$-quasirandom in the graph $G \times K_2$ (the \emph{bipartite double cover} of $G$), whose vertex set is two copies of $V(G)$, and whose edges are all pairs crossing the two parts that correspond to an edge of $G$. The quasirandom subset lemma then states that every graph contains a linear-sized quasirandom subset.
\begin{lemma}[{\cite[Lemma 5.2]{MR2989432}}]\label{lem:conlon fox}
	For every $\varepsilon>0$, there exists $\delta>0$ such that for every graph $G$, there is a set $U \subseteq V(G)$ with $\ab U \geq \delta \ab{V(G)}$ such that $U$ is $\varepsilon$-quasirandom. 
\end{lemma}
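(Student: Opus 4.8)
The plan is to derive \cref{lem:conlon fox} from the bipartite case of the cylinder regularity lemma for graphs (that is, \cref{lem:graph cylinder} with $t=2$), by running an energy-increment iteration in the spirit of the Haussler-free argument sketched in \cref{subsec:hausslerfree}.

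First I would reformulate the conclusion in a more convenient way. Unwinding the definition, the statement that the pair $(U,U)$ is $\varepsilon$-quasirandom in the bipartite double cover $G\times K_2$ is equivalent, up to a polynomial change in $\varepsilon$, to the statement that the induced subgraph $G[U]$ is $\varepsilon$-quasirandom, i.e.\ its $C_4$-count is within $\varepsilon\ab U^4$ of what a random graph of the same density would have. So it suffices to find a linear-sized $U$ with $G[U]$ quasirandom, and this is what I would aim for.

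The iteration maintains a single vertex set $U$, initialized to $V(G)$. If $G[U]$ is already $\varepsilon$-quasirandom, we stop. Otherwise, apply the bipartite cylinder regularity lemma to the bipartite double cover of $G[U]$, with a parameter $\eta=\poly(\varepsilon)$, obtaining a partition $\{A_i\times B_i\}_i$ of $U\times U$ into $k\leq 2^{\poly(1/\eta)}$ cylinders, of which a $(1-\eta)$-fraction are $\eta$-quasirandom. The key structural point is that, identifying the two copies of $U$, the ``diagonal'' sets $C_i\coloneqq A_i\cap B_i$ form a genuine partition of $U$: each vertex of $U$ lies on the diagonal of exactly one cylinder. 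I would then show that, since $G[U]$ fails to be quasirandom while almost every cylinder is quasirandom, passing from $U$ to an appropriately chosen diagonal cell $C_i$ strictly increases a bounded potential (a suitable ``mean-square density'' attached to $U$) by a fixed amount $\Omega(\poly(\varepsilon))$. As this potential lies in $[0,1]$, the iteration must halt after $\poly(1/\varepsilon)$ rounds; since each round shrinks $U$ by a factor at least $1/k=1/2^{\poly(1/\varepsilon)}$, the final set has size at least $\delta n$ with $\delta=\delta(\varepsilon)=1/2^{\poly(1/\varepsilon)}$, and by the stopping rule $G[U]$ is $\varepsilon$-quasirandom, as required.

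I expect the heart of the matter — and the main obstacle — to be the claim that passing to a diagonal cell $C_i$ increases the potential. The naive approach, namely taking the largest $C_i$ and invoking heredity of quasirandomness under passing to subsets, does not work: one can only guarantee that $\ab{C_i}$ is a $1/k$-fraction of the sides $\ab{A_i},\ab{B_i}$ of its host cylinder, and heredity degrades the quasirandomness parameter by a factor polynomial in $k=2^{\poly(1/\eta)}$, which would overwhelm any gain coming from a small $\eta$ and make the argument circular. Thus one is forced to run the energy increment using the diagonal \emph{partition} $\{C_i\}$ as a whole rather than any single cylinder, and the delicate point is to match the energy accounting for this diagonal partition against the failure of quasirandomness of $G[U]$. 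Beyond this, the remaining steps should be routine: one verifies the equivalence between the double-cover formulation and the induced-subgraph formulation, and checks that the reduction from an arbitrary graph to its bipartite double cover (which is what brings the \emph{bipartite} cylinder regularity lemma into play) goes through cleanly.
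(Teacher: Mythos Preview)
The paper does not spell out a full proof of \cref{lem:conlon fox} but describes the Conlon--Fox argument: partition $V(G)$ arbitrarily into $t$ equal parts with $t$ large, apply the $t$-partite cylinder regularity lemma to extract a single $\eta$-quasirandom cylinder $Y_1\times\dots\times Y_t$, and then colour each pair $\{i,j\}$ by a discretised value of $d(Y_i,Y_j)$ and apply Ramsey's theorem to find a clique of size $s$ on which all pairwise densities agree to within $\eta$; the union of the corresponding $Y_i$ is the desired quasirandom set. The Ramsey step is what produces the double-exponential bound $1/\delta\le 2^{2^{\poly(1/\varepsilon)}}$, and the paper's proof sketch of the hypergraph analogue, \cref{thm:hypergraph quasirandom subset}, follows exactly this template.

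Your plan is genuinely different---it drops Ramsey and attempts a pure energy increment on diagonal cells of a bipartite cylinder partition---but it has a real gap precisely where you yourself flag the difficulty. The bipartite cylinder regularity lemma applied to the double cover controls the pairs $(A_i,B_i)$, but the diagonal collection $\{C_i\}$ is a \emph{vertex} partition of $U$, and its mean-square energy $\sum_{i,j}\frac{\ab{C_i}\ab{C_j}}{\ab U^2}d(C_i,C_j)^2$ has no direct relation to the cylinder-partition energy: quasirandomness of $(A_i,B_i)$ tells you nothing about $d(C_i,C_j)$ for $i\neq j$, and for $i=j$ you have already noted that $C_i$ can be a tiny fraction of $A_i,B_i$. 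Your outline offers no mechanism that converts the failure of $\varepsilon$-quasirandomness of $G[U]$ into a uniform lower bound on the energy gain for $\{C_i\}$, nor for any single cell. A direct energy-increment proof does exist (the paper references one in \cite{yuvalNotes}), but it does not route through the cylinder regularity lemma; it works directly with witnesses to non-quasirandomness to refine a vertex partition. A clear warning sign for your route is the bound it would yield: you claim $1/\delta\le 2^{\poly(1/\varepsilon)}$, a single exponential, yet improving Conlon--Fox from double- to single-exponential is, as the paper notes in \cref{sec:cylinder apps}, a well-known open problem.
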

\cref{lem:conlon fox} is extremely useful (see e.g.\ \cite{MR4482092,MR3784260} and \cite[Chapter 6]{yuvalNotes} for several applications). This statement can be deduced from Szemer\'edi's regularity lemma, but the remarkable insight of Conlon and Fox is that one can find an alternative proof which yields much stronger quantitative bounds; Conlon and Fox proved that $1/\delta$ can be taken to be merely double-exponential in $1/{\poly(\varepsilon)}$. The proof of \cref{lem:conlon fox} with such a bound is quite short: it follows from an application of the cylinder regularity lemma and Ramsey's theorem (although see \cite[Lemma 6.1.4]{yuvalNotes} for a direct energy-increment proof).

One of the many immediate consequences of \cref{lem:conlon fox} is a famous result of R\"odl \cite{MR0837962}, which we now discuss. 
Recall that a graph $G$ is said to be \emph{induced-$F$-free} if it does not have an induced subgraph isomorphic to $F$. R\"odl proved that if $G$ is induced-$F$-free, it has a linear-sized vertex subset whose edge density is close to $0$ or $1$. More precisely, there is a set $U \subseteq V(G)$ with $d_G(U) \in [0,\varepsilon] \cup [1-\varepsilon,1]$, where $\ab U \geq \delta \ab{V(G)}$ and $\delta>0$ depends only on $F$ and $\varepsilon$. R\"odl's theorem turns out to be closely connected to other fundamental questions in graph theory such as the Erd\H os--Hajnal conjecture, see e.g.\ \cite{2403.08303} for details. 

There are a number of ways known for proving R\"odl's theorem, but one of the simplest is as a direct consequence of \cref{lem:conlon fox}: by the induced counting lemma, a quasirandom set in an induced-$F$-free cannot have edge density bounded away from $0$ or $1$. See e.g.\ \cite[Theorem 6.3.3]{yuvalNotes} for a detailed proof via this method.

As R\"odl himself observed \cite{MR0837962}, the obvious extension of this statement to hypergraphs is false. Indeed, the well-known \emph{random tournament} construction---the $3$-graph whose hyperedges are the cyclic triangles in a random tournament---is induced-$K_4\up 3$-free, but all of its linear-sized subsets have edge density $\frac 14 +o(1)$. In fact, the largest subsets of this $3$-graph with density close to $0$ or $1$ have logarithmic order, and a result of Amir, Shapira, and Tyomkyn \cite{MR4177961} shows that this is the smallest possible in an induced-$F$-free $3$-graph.

The same random tournament construction is one of the classical examples demonstrating the difficulty in defining a successful notion of hypergraph regularity. Indeed, the same construction demonstrates that there is no direct hypergraph analogue of \cref{lem:conlon fox}, in the sense that every linear-sized vertex subset of this random tournament hypergraph spans a $3$-graph which is not quasirandom. As such, one can learn the lessons of hypergraph regularity, and decide that in order to sensibly extend R\"odl's theorem to hypergraphs, one should not only restrict the vertex set of $\h$, but also the set of pairs. Indeed, we prove the following theorem.
\begin{theorem}\label{thm:hypergraph rodl}
	Let $F$ be a $3$-graph, let $\varepsilon>0$, and let $\psi:(0,1) \to (0,1)$ be a polynomial function. There exists $\delta>0$ such that the following holds. If $\h$ is an induced-$F$-free $3$-graph, then there is a subset $U \subseteq V(\h)$ and a graph $G$ with vertex set $U$ such that $\ab U \geq \delta \ab{V(\h)}$, $e(G) \geq \delta \ab U^2$, $G$ is $\psi(\delta)$-quasirandom, and
	\[
		d(\h[U,G]\mid G) \in [0,\varepsilon] \cup [1-\varepsilon,1].
	\]
	Moreover, we may take $1/\delta \leq \twr(2^{2^{\poly(1/\varepsilon)}})$.
\end{theorem}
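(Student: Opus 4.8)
The plan is to deduce \cref{thm:hypergraph rodl} from the cylinder regularity lemma for $3$-graphs (\cref{thm:hyper cylinder with delta}) together with the induced counting lemma (\cref{lem:induced counting}), following the strategy by which R\"odl's theorem is derived from \cref{lem:conlon fox} in the graph case. Concretely, the main step is to establish a hypergraph analogue of the quasirandom subset lemma: for every $3$-graph $\h$ there is a set $U\subseteq V(\h)$ with $\ab U\geq\delta\ab{V(\h)}$ and a \emph{quasirandom graph} $G$ on $U$ with $e(G)\geq\delta\ab U^2$ such that $G$ is $\psi(\delta)$-quasirandom and the chain $(G,\h[U,G])$ is $(\eta_0,\psi_0)$-quasirandom, where $\eta_0$ and $\psi_0$ are the parameters (polynomial in $1/\varepsilon$, with degree depending on $r\coloneqq\ab{V(F)}$) demanded by \cref{lem:induced counting} for the forbidden $3$-graph $F$. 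Granting this, \cref{thm:hypergraph rodl} follows immediately: if the relative density $d(\h[U,G]\mid G)$ were strictly between $\varepsilon$ and $1-\varepsilon$, then \cref{lem:induced counting} would produce an induced copy of $F$ inside $\h$, contradicting that $\h$ is induced-$F$-free; hence $d(\h[U,G]\mid G)\in[0,\varepsilon]\cup[1-\varepsilon,1]$. (To pass from the copy produced by the counting lemma to a genuine induced copy of $F$ in $\h$, one selects the $r$ vertices of the copy to span a clique of $G$, so that every triple among them is a triangle of $G$ and its membership in $\h$ is governed by $\h[U,G]$; it is for this reason that $G$ must be a dense quasirandom graph rather than a partite one, and that the relevant quasirandomness notion is the strong one from \cref{def:quasirandom chain,def:product density} rather than mere weak quasirandomness of $\h[U]$, which is destroyed by the random-tournament example.)

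To build the pair $(U,G)$, the plan is to fix a constant $t$, partition $V(\h)$ into $t$ equal parts, and apply \cref{thm:hyper cylinder with delta} to the resulting $t$-partite $3$-graph with a polynomial quasirandomness function $\psi^\dagger$ chosen as a suitable common lower bound of $\psi$, of $\psi_0$, and of $x\mapsto 2^{-100}x^{28}$. A pigeonholing argument over the (boundedly many) cylinders and edge-part choices then extracts a single cylinder $Y=Y_1\times\dots\times Y_t$, whose parts all have comparable linear size, together with one bipartite graph $Z_{ij}\subseteq Y_i\times Y_j$ per pair, such that every $Z_{ij}$ is quasirandom with density bounded below, the product densities of the tripartite subgraphs are bounded below, and $\h$ is $(\eta_0,\psi^\dagger)$-quasirandom relative to each of the $\binom t3$ tripartite subchains.

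Next, in the spirit of the Conlon--Fox proof of \cref{lem:conlon fox}, I would apply Ramsey's theorem to the colouring of the pairs $\{i,j\}$ by the (suitably discretised) density $d(Z_{ij})$ to obtain an index set $I\subseteq[t]$, as large as the Ramsey number permits, on which these densities are essentially constant, say all close to $p$. One then sets $U=\bigcup_{i\in I}Y_i$ and lets $G$ be the union of the graphs $Z_{ij}$ with $i,j\in I$ together with a quasirandom graph of density $\approx p$ inserted inside each $Y_i$; since the densities agree across all pairs and within all parts, $G$ is globally quasirandom, and since almost every triangle of $G$ is transversal, $\h[U,G]$ inherits its quasirandomness relative to $G$ from that of $\h$ relative to the tripartite pieces. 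The bound $1/\delta\leq\twr(2^{2^{\poly(1/\varepsilon)}})$ is then obtained by composing the tower-type bound of \cref{thm:hyper cylinder with delta} (applied with the polynomial parameters above) with the exponential dependence in Ramsey's theorem, exactly as the double-exponential bound in \cref{lem:conlon fox} arises from composing the exponential bound of the \emph{graph} cylinder regularity lemma with Ramsey's theorem.

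The hard part will be the construction of $G$ and the accompanying bookkeeping of quasirandomness parameters. In contrast to the graph case, where $G$ is furnished by the ambient graph, here one must manufacture a single quasirandom graph from the partite cylinder structure: the within-part edges of $G$ must be chosen so that $G$ is globally quasirandom of density $\Theta(p)$ while simultaneously not spoiling the quasirandomness of $\h$ relative to $G$ on the triangles of $G$ that use within-part edges. Reconciling these two demands, together with the requirement that $G$ be quasirandom at the level $\psi(\delta)$ rather than merely at some cruder level, is the genuine technical obstacle, and it is at this point that the interdependence between the polynomial fed into \cref{thm:hyper cylinder with delta}, the size of the Ramsey set $I$, and the resulting value of $\delta$ must be untangled carefully; I expect this calibration, rather than any single conceptual step, to be where the bulk of the work lies.
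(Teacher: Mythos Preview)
Your plan is essentially the paper's, but there is a real gap in the Ramsey step. You apply Ramsey to the \emph{pairs} $\{i,j\}$, colouring by the discretised density $d(Z_{ij})$; this does ensure that the glued graph $G$ on $U=\bigcup_{i\in I}Y_i$ is globally quasirandom of some density $p$. However, it does \emph{not} force the relative densities $d_{ijk}\coloneqq d(\h\mid Z_{ij}\cup Z_{ik}\cup Z_{jk})$ to be close to one another, and without that the assertion ``$\h[U,G]$ inherits its quasirandomness relative to $G$ from that of $\h$ relative to the tripartite pieces'' is false. Indeed, the octahedron norm of $f=(\1_{E(\h)}-d_{\mathrm{avg}})\1_{\Delta(G)}$ restricted to a triple $\{i,j,k\}$ of parts is that of $(\1_{E(\h)}-d_{ijk})\1_{\Delta}+(d_{ijk}-d_{\mathrm{avg}})\1_{\Delta}$; the first summand is small by hypothesis, but the second contributes a term of order $(d_{ijk}-d_{\mathrm{avg}})^8$ which need not be small. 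So the induced counting lemma cannot be applied to $(G,\h[U,G])$.

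The fix, which the paper carries out, is to apply the \emph{$3$-uniform} hypergraph Ramsey theorem: colour each triple $\{i,j,k\}$ by the discretised value of $d_{ijk}$, and pass to a monochromatic sub-index-set $I$ on which all $d_{ijk}$ agree up to $\eta^2$. (The bipartite densities are equalised separately, by random subsampling rather than by Ramsey on pairs.) This is also what produces the double exponential in the stated bound $1/\delta\leq\twr(2^{2^{\poly(1/\varepsilon)}})$: the $3$-uniform Ramsey number is doubly exponential in the desired clique size, which then feeds into $t$ and hence into the tower from \cref{thm:hyper cylinder with delta}. Your analysis, using only graph Ramsey, would give a single exponential there, which is a symptom of the missing step.
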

Note that it is natural to insist that $G$ is highly quasirandom, as otherwise the result may be vacuously true (e.g.\ if $G$ is a complete bipartite graph, which has positive density but no triangles). Moreover, this can be viewed as a direct extension of the condition in R\"odl's theorem, since every set of vertices is ``highly quasirandom in uniformity $1$''. 

We prove \cref{thm:hypergraph rodl} via the same proof technique discussed above, and as such, we begin by proving an analogue of \cref{lem:conlon fox} for $3$-graphs.
Given a hypergraph $\h$, a vertex subset $U \subseteq V(\h)$, and a graph $G$ with vertex set $U$, let $\h[U,G]$ denote the subhypergraph of $\h$ obtained by only keeping the vertices in $U$, and only keeping the hyperedges supported on $\Delta(G)$. Let us say that $\h[U,G]$ is $(\eta,\psi)$-quasirandom if the following tripartite chain is $(\eta,\psi)$-quasirandom: its vertex set consists of three copies of $U$, its tripartite graph consists of three copies of the bipartite double cover of $G$, and its hyperedges are all triples across the three parts corresponding to hyperedges in $\h$. With this terminology, we can state the following hypergraph extension of \cref{lem:conlon fox}.
\begin{theorem}\label{thm:hypergraph quasirandom subset}
	For every $\eta>0$ and every polynomial function $\psi:(0,1) \to (0,1)$, there exists $\delta>0$ such that the following holds for every $3$-graph $\h$. There is a subset $U \subseteq V(\h)$ and a graph $G$ with vertex set $U$ such that $\ab U \geq \delta \ab{V(\h)}$, $e(G) \geq \delta \ab U^2$, and $\h[U,G]$ is $(\eta,\psi)$-quasirandom. 

	Moreover, we may take $1/\delta \leq \twr(2^{2^{\poly(1/\eta)}})$.
\end{theorem}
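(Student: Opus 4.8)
The plan is to mimic the Conlon--Fox proof of \cref{lem:conlon fox} (cylinder regularity plus Ramsey), using our hypergraph cylinder regularity lemma \cref{thm:hyper cylinder} in place of the graph one. Fix $\eta$ and the polynomial $\psi$. We choose auxiliary parameters $r=\Theta(1/\eta)$ (the number of ``blocks'' we will glue together), a granularity $\delta'=\poly(\eta)$ for rounding densities, $\eta_0=\poly(\eta)$, a suitable polynomial $\psi_0$ with $\psi_0(x)\leq\min\{\psi(x)^{C_0},\,2^{-100}x^{28}\}$ for a large absolute constant $C_0$ (chosen so that, via Cauchy--Schwarz, quasirandomness of individual pieces with parameter $\psi_0(\cdot)$ upgrades to quasirandomness with parameter $\psi(\cdot)$ of a union of such pieces of a common density), and $t$ equal to a $3$-uniform hypergraph Ramsey number with $(2/\delta')^4$ colours and clique size $r+1$; the key point is that $t\leq 2^{2^{\poly(1/\eta)}}$, since this is a Ramsey number for a constant number of colours and a constant clique size.

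First, partition $V(\h)$ into $t$ nearly-equal parts $X_1,\dots,X_t$, view $\h$ as a $t$-partite $3$-graph, and apply \cref{thm:hyper cylinder} with parameters $\eta_0,\psi_0$ to obtain a cylinder chain partition $\p=(\p_V,\p_E)$ relative to which $\h$ is $(\eta_0,\psi_0)$-quasirandom, with $|\p_V|,|\p_E|\leq\twr(\poly(t))\leq\twr(2^{2^{\poly(1/\eta)}})$. A double-averaging argument (Markov applied to the reciprocal cylinder size, and to the conditional failure probability of each cylinder) produces a cylinder $Y=Y_1\times\dots\times Y_t\in\p_V$ that is simultaneously large ($|Y_i|\geq |X_i|/(2|\p_V|)$ for every $i$) and ``reliable'': a uniformly random tuple of $Y$ is \emph{good} with probability at least $\tfrac12$, where good means that the $t$-partite graph $Z\coloneqq\p_E[x_1,\dots,x_t]$ has all $\binom t2$ bipartite pieces $\psi_0(\cdot)$-quasirandom of density at least $\delta_0\coloneqq 1/(2\binom t2|\p_E|)$ (the density bound because sparse pieces carry few edges), and each of the $\binom t3$ tripartite subchains of $\h[Y;Z]$ is $(\eta_0,\psi_0)$-quasirandom. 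We fix one good tuple, fix its $Z$, and trim every $Y_i$ to a common size $m=\Omega_\eta(|V(\h)|)$, which only mildly degrades all quasirandomness parameters.

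Now comes the Ramsey step. Colour each triple $\{i<j<k\}\subseteq[t]$ by the quadruple consisting of the $\delta'$-rounded values of $d(\h\mid Z_{ij}\cup Z_{ik}\cup Z_{jk})$ and of the three pair-densities $d(Z_{ij}),d(Z_{ik}),d(Z_{jk})$ (ordered by $i<j<k$). By the choice of $t$ there is a monochromatic set $I_0$ of size $r+1$; deleting its largest element yields $I$ with $|I|=r$ such that \emph{all} pair-densities $d(Z_{ij})$, $i,j\in I$, agree up to $\delta'$ with a common value $d^*\geq\delta_0/2$, and all relative densities over triples of $I$ agree up to $\delta'$ with a common value $\rho^*$. (Folding the three pair-densities into this single \emph{triple}-colouring, instead of running a separate graph-Ramsey step, is what keeps $t$---and hence the final bound---at the claimed level; a separate step would force $t$ up to a triple exponential.) Relabel $I=[r]$, randomly sparsify each $Z_{ij}$ ($i<j$) to density exactly $d^{**}\coloneqq d^*-\delta'$, add a fresh random graph $W_i$ of density exactly $d^{**}$ inside each $Y_i$, and set $U\coloneqq Y_1\cup\dots\cup Y_r$ and $G\coloneqq\bigcup_{i<j}Z'_{ij}\cup\bigcup_i W_i$. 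Then $|U|\geq\delta|V(\h)|$ and $e(G)\geq\delta|U|^2$ with $1/\delta=O(t|\p_V|/r)\leq\twr(2^{2^{\poly(1/\eta)}})$, and (by standard concentration, fixing a good realisation) the sparsified pieces remain suitably quasirandom and $\h$ remains $\eta_0$-quasirandom relative to each sparsified cell, with relative density still $\rho^*\pm\delta'$.

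It remains to check that the chain $\h[U,G]$---three copies of $U$, three copies of the bipartite double cover of $G$, and the hyperedges of $\h$ on its triangles---is $(\eta,\psi)$-quasirandom. Its underlying tripartite graph is $\psi(\delta(\cdot))$-quasirandom because the double cover of $G$ is an $r\times r$ array of quasirandom blocks \emph{all of a common density} $d^{**}$, hence itself quasirandom with parameter $O(\psi_0((d^{**})^3))\leq\psi((d^{**})^3)$ by the choice of $\psi_0$; this is precisely why we sparsified to a common density and filled in the diagonal blocks---otherwise the empty diagonal would contribute a defect of order $d^{**}/r^2$, which cannot be bounded by $\psi((d^{**})^3)$ when $d^{**}$ is small. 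The real obstacle is showing $\h$ is $\eta$-quasirandom relative to this tripartite graph: one expands the octahedron sum and splits it according to which cell $(i,j,k)$ each of the eight triples lies in, and according to whether any triple is ``diagonal'' (two vertices in a common $Y_i$, where $\h$ is uncontrolled). Inside a fixed cell the octahedron deviation is governed by the $(\eta_0,\psi_0)$-quasirandomness of that cell; the deviations caused by the $\delta'$-spread of the relative densities and by diagonal triples are each bounded using only that every octahedron summand is at most $1$ while the number of octahedra supported on the tripartite graph is $\Theta((d^{**})^{12}m^6)$---exactly the scale of the right-hand side of the quasirandomness inequality---so they are small once $\delta'=\poly(\eta)$ and $1/r=O(\eta)$ are small enough. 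No induced counting lemma is needed here (we only want quasirandomness, not homogeneity), and one finishes by checking that the choices $\eta_0,\psi_0,\delta',r$ as polynomials in $\eta$ are mutually consistent, giving $1/\delta\leq\twr(2^{2^{\poly(1/\eta)}})$.
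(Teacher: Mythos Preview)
Your argument is correct and follows essentially the same strategy as the paper's proof sketch: apply \cref{thm:hyper cylinder}, restrict to a single good cylinder, use hypergraph Ramsey to synchronize the relative densities, fill in the diagonal blocks with random graphs of matching density, and verify the octahedron bound by splitting over the $r^6$ cell-tuples. The only noteworthy difference is how the pair densities are equalized: the paper first randomly subsamples \emph{all} $\binom t2$ bipartite graphs $Z_{ij}$ down to a common density before running Ramsey (so the Ramsey colouring records only the rounded relative density, with $1/\eta^2$ colours), whereas you fold the three pair densities into the triple colouring and then use the ``delete the maximum element'' trick to force every surviving pair to appear as the first coordinate of some triple and hence match a common value---both routes give a double-exponential $t$ and the same final bound.
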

In other words, we may restrict $V(\h)$ to a constant-density subset, and restrict the complete graph on $U$ to a constant-density subgraph, such that the hypergraph $\h[U,G]$ supported on these substructures is quasirandom. Note that, as above, we must include the graph $G$ for such a statement to be true: the random tournament construction witnesses that no such statement can hold if we are not allowed to restrict both the set of vertices and the set of pairs. The proof of \cref{thm:hypergraph quasirandom subset} closely follows that of \cref{lem:conlon fox}, with the main difference being an application of \cref{thm:hyper cylinder} rather than the cylinder regularity lemma for graphs. Note that, as in the graph case, one can easily deduce a statement like \cref{thm:hypergraph quasirandom subset} from the hypergraph regularity lemma, but such a proof would yield wowzer-type bounds; the main upshot of \cref{thm:hypergraph quasirandom subset} is the quantitative improvement over this.
\begin{proof}[Proof sketch of \cref{thm:hypergraph quasirandom subset}]
	Let $t$ be a large parameter to be chosen later, and arbitrarily partition $V(\h)$ into $t$ equal-sized parts. Apply \cref{thm:hyper cylinder} to obtain an $(\eta^2,\psi^2)$-quasirandom cylinder chain partition of this $t$-partite $3$-graph. In fact, we will not need the whole partition, and we may simply restrict to a single quasirandom cylinder $(Y,Z)$. Let us assume for simplicity that the vertex cylinder is equitable, i.e.\ that $\ab{Y_1}=\dots=\ab{Y_t}$.

	By randomly subsampling, we may assume that all bipartite graphs appearing in $Z$ have the same edge density $\delta$; it is well-known (and follows straightforwardly\footnote{For this proof, it is very convenient to use the definition of quasirandomness in \cref{def:graph quasirandomness}, rather than other equivalent notions like $\varepsilon$-regularity, for which naive applications of Azuma's inequality and the union bound do not give sufficiently strong estimates.} from Azuma's inequality) that random subsampling preserves quasirandomness, up to some small losses. We now define an edge-coloring of the complete $t$-vertex $3$-graph with $1/\eta^2$ colors, as follows. The triple $\{i,j,k\}$ receives color $\ell$ if $d(\h[Y;Z;\{i,j,k\}]) \in [(\ell-1)\eta^2,\ell\eta^2]$ (if a triple receives two colors, we break ties arbitrarily). By Ramsey's theorem for hypergraphs, we may pass to a subset of $s$ parts such that all triples among these parts receive the same color. 

	By renaming the indices, this means we have found disjoint vertex subsets $Y_1,\dots,Y_{s}$ as well as bipartite graphs $Z_{ij}\subseteq Y_i \times Y_j$, all of the same edge density, such that for all $1 \leq i<j<k\leq s$, all the chains $\h[Y;Z;\{i,j,k\}]$ are $(\eta^2,\psi^2)$-quasirandom, and have the same relative density up to an additive error of $\eta^2$. At this point, we let $U = Y_1 \cup \dots \cup Y_s$,  and let $E(G) = \bigcup_{i\leq j}E(Z_{ij})$, where we define $Z_{ii}$ to be a random graph with vertex set $Y_i$ and edge density $\delta$ for all $i$. In particular, with high probability as $\ab{V(\h)} \to \infty$, each graph $Z_{ii}$ is $\psi(\delta)^2$-quasirandom. Thus $G$ is the union of graphs, each of which has edge density $\delta$ and is $\psi(\delta)^2$-quasirandom, which implies that $G$ itself has edge density $\delta$ and is $\psi(\delta)$-quasirandom.
	
	Finally, we claim that if $s \geq 10/\eta^2$, then $\h[U,G]$ is $(\eta,\psi)$-quasirandom. Indeed, if $s$ is so large, then at least a $(1-\eta^2)$-fraction of triples in $U^3$ lie in three distinct parts $Y_i,Y_j,Y_k$. As all chains $\h[Y;Z;\{i,j,k\}]$ are $(\eta^2,\psi^2)$-quasirandom and have the same density up to an additive error of $\eta^2$, it is not hard to conclude that $\h[U,G]$ is indeed $(\eta,\psi)$-quasirandom (see \cite[Lemma 5.5]{MR2989432} for the analogous argument in uniformity $2$).

	This concludes the proof sketch; all that remains is to understand its quantitative aspects. We set $s= 10/\eta^2$. We then need $t$ to be at least the $(1/\eta^2)$-color hypergraph Ramsey number of $s$, so we may take $t=2^{2^{100\eta^{-4}}}$. Finally, \cref{thm:hyper cylinder} then implies that both $\delta$ and $\ab{Y_i}/\ab{V(\h)}$ can be lower-bounded as $1/\twr(2^{2^{\poly(1/\eta)}})$, as claimed.
\end{proof}

It is now straightforward to deduce our generalization of R\"odl's theorem, \cref{thm:hypergraph rodl}, from \cref{thm:hypergraph quasirandom subset}.

\begin{proof}[Proof sketch of \cref{thm:hypergraph rodl}]
	Let $\eta$ be polynomially small with respect to $\varepsilon$. By potentially replacing $\psi$ with a smaller polynomial function, we may assume that the induced counting lemma (\cref{lem:induced counting}) applies, that is, that whenever $\h[U,G]$ is $(\eta,\psi)$-quasirandom with relative density in $[\varepsilon,1-\varepsilon]$, then $\h[U,G]$ contains an induced copy of $F$. In fact, by applying the induced counting lemma for chains \cite[Corollary 5.3]{MR2373376}, we may assume that in this situation, we can find an induced copy of $F$ so that all pairs of vertices in $F$ span an edge of $G$. 
	
	We now apply \cref{thm:hypergraph quasirandom subset} to $\h$ to obtain $U$ and $G$ as above.We are done if $d(\h[U,G]\mid G) \in [0,\varepsilon] \cup [1-\varepsilon,1]$. If this does not happen, then we can find an induced copy of $F$ in $\h[U,G]$ as above. In particular, the fact that all pairs of vertices in $F$ span an edge of $G$ implies that this copy is also induced in $\h$, a contradiction.
\end{proof}
The proofs above give tower-type bounds in \cref{thm:hypergraph quasirandom subset,thm:hypergraph rodl}, and we think it would be extremely interesting to determine whether these bounds are close to best possible.
\begin{question}\label{qu:subtower for applications}
	Can one prove \cref{thm:hypergraph quasirandom subset} or \cref{thm:hypergraph rodl} with bounds of constant tower height? 
\end{question}
The analogous questions in uniformity $2$ are also very interesting:
in \cref{lem:conlon fox}, it remains a major open problem to determine whether the double-exponential dependence can be improved to single exponential (which would be best possible). In R\"odl's theorem, quasipolynomial bounds are known \cite{MR2455625}, and improving these bounds to polynomial is now known to be equivalent to proving the Erd\H os--Hajnal conjecture \cite{2403.08303}.

In order to answer \cref{qu:subtower for applications}, it is natural to try to improve the bounds in the cylinder regularity lemma. 
However, we believe that this is impossible, and expect the bound in \cref{thm:hyper cylinder} to be essentially best possible.
\begin{conjecture}\label{conj:tower for cylinder}
	There exists a tripartite $3$-graph $\h$ for which every $(\eta,\psi)$-quasirandom cylinder partition has at least $\twr(\psi(\eta)^{-C})$ parts, for some constant $C$.
\end{conjecture}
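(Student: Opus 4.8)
We close with an outline of how one might attack \cref{conj:tower for cylinder}; since this is a lower bound, the substance is a construction, and we have not verified the details. The plan is to adapt the multi-scale machinery behind the known lower bounds for regularity lemmas---Gowers \cite{MR1445389} in the graph case and Moshkovitz--Shapira \cite{MR4025519} in the $3$-graph case---but calibrated for cylinder chain partitions so that each of $K \coloneqq \poly(1/\eta)$ nested ``scales'' costs only a single exponential rather than a single tower. Composing $K$ exponentials then yields a cylinder chain partition of size $\twr(K) = \twr(\poly(1/\eta))$, matching \cref{thm:hyper cylinder}. The heuristic is that, in the proof of \cref{thm:hyper cylinder}, the per-iteration loss is a single application of the cylinder regularity lemma for graphs (\cref{lem:graph cylinder}), which costs only an exponential---whereas in the ordinary hypergraph regularity lemma the analogous per-step loss is a full application of Szemer\'edi's lemma, which is tower-type; this is precisely why that proof yields a wowzer while \cref{thm:hyper cylinder} yields only a tower. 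A tight construction should therefore force $\poly(1/\eta)$ rounds of graph cylinder regularization, each genuinely requiring an exponential number of new cylinders.

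The building block would be a bipartite graph $\Gamma$ on $n+n$ vertices that is hard for \emph{weak} regularity: no partition of its vertex sets into fewer than $2^{\poly(1/\eta)}$ cylinders makes $\Gamma$ quasirandom within most cylinders, such $\Gamma$ being available from the bipartite variants of \cite{MR1445389, MR3737374}. The obvious first candidate for a lower-bound $3$-graph is the cone hypergraph $n \otimes G$ of \cref{def:n-fold link}, which already features in the weak-regularity lower bound of \cref{prop:terry LB}; but establishing a lower bound specifically for $(\eta,\psi)$-quasirandom cylinder partitions takes more care, since one must also account for the $\psi(\delta)$-quasirandomness demanded of the edge graphs, and a flat cone does not obviously force the \emph{vertex} cylinder partition (rather than just the edge partition) to be large. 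We would therefore use a genuinely multi-scale construction: telescope $K$ copies of $\Gamma$-type building blocks into a single tripartite $3$-graph $\h$ on parts $X_1, X_2, X_3$, in the style of \cite{MR4025519}, so that the scale-$k$ structure becomes ``visible'' only once scales $1,\dots,k-1$ have been resolved, and is arranged---via a hyperedge relation that depends on all three bipartite projections simultaneously, so as to be unabsorbable into a single edge partition---to force a fresh round of vertex cylinder refinement at each scale.

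The iteration would be tracked by a potential function, namely the mean-squared density $q$ of \cref{def:mean-squared} or a variant adapted to $\h$, which jumps by at least a constant depending only on $\eta$ each time a scale is resolved, while the unresolved scales $k+1,\dots,K$ stay quasirandom at the granularity of any scale-$k$ partition. One would conclude that every $(\eta,\psi)$-quasirandom cylinder chain partition of $\h$ must resolve all $K$ scales, and that each resolution multiplies $\ab{\p_V}$ by at least $2^{\poly(1/\eta)}$---the cost of making a $\Gamma$-type graph quasirandom within its cylinders while keeping those cylinders' graphs quasirandom---so that $\ab{\p_V} \geq \twr(K) = \twr(\poly(1/\eta))$.

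The main obstacle, as always for iterated lower bounds, is the construction, and here two features in particular. The first is the genuine \emph{nesting} of the scales: no cylinder partition may shortcut the process by resolving several scales simultaneously or out of order, which forces the scale-$k$ irregularity to appear quasirandom under every scale-$(k-1)$ partition. The second, which has no graph analogue, is calibrating the tower \emph{height}: each scale must be hard enough to force a round of \emph{vertex} cylinder refinement (so that the flat cone's failure to do so is genuinely repaired), yet no harder than one round of graph cylinder regularization (so that the total height is $\poly(1/\eta)$, not wowzer-type as in \cite{MR4025519}). Designing scales that are ``exactly one exponential hard'' and stack correctly is where we expect the real difficulty to lie.
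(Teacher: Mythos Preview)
This statement is a \emph{conjecture} in the paper, not a theorem: the paper does not prove it, and there is no ``paper's own proof'' to compare against. What the paper offers (in the paragraph immediately following \cref{conj:tower for cylinder}) is two speculative lines of attack, with no details. Your proposal is likewise explicitly an outline with unverified details, so the honest comparison is between two sketches rather than between a proof attempt and a proof.

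Your outline essentially elaborates on the \emph{first} of the paper's two suggestions: adapt the Moshkovitz--Shapira multi-scale construction directly, calibrating each scale to cost a single exponential in the cylinder setting so that $\poly(1/\eta)$ scales compose to a tower. You add more texture than the paper does---the role of a weak-regularity-hard bipartite building block, the inadequacy of the flat cone $n\otimes G$, the need to force genuine \emph{vertex} cylinder refinement at each scale, and the ``exactly one exponential hard'' calibration problem---and you correctly identify the two main obstacles. This is a reasonable and well-motivated sketch in the same spirit as the paper's first suggestion.

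What you do not mention, and what the paper flags as a potentially cleaner alternative, is the \emph{black-box} route: if one can show that iterating \cref{thm:hyper cylinder} yields the full hypergraph regularity lemma (as is known in uniformity $2$, where iterating cylinder or Frieze--Kannan regularity recovers Szemer\'edi), then sub-tower bounds for \cref{thm:hyper cylinder} would give sub-wowzer bounds for $3$-graph regularity, contradicting Moshkovitz--Shapira. This approach sidesteps the delicate construction entirely and reduces the conjecture to a structural statement about the relationship between cylinder and full regularity; it may well be easier than the direct construction you outline.
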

It may be possible to prove \cref{conj:tower for cylinder} by mimicking the technique of Moshkovitz and Shapira \cite{MR4025519}. Alternately, it may be possible to deduce \cref{conj:tower for cylinder} as a black-box reduction to the Moshkovitz--Shapira theorem: it may be possible to prove the full hypergraph regularity lemma by iterative applications of \cref{thm:hyper cylinder}, in which case sub-tower bounds for \cref{thm:hyper cylinder} would yield sub-wowzer bounds for the $3$-graph regularity lemma, contradicting the lower bound of Moshkovitz--Shapira. In uniformity $2$, it is well-known (see \cite[Section 2.2]{MR2798368} or \cite[Section 5.4]{MR2989432}) that iterative applications of weak regularity lemmas such as those of Duke--Lefmann--R\"odl \cite{MR1333857} or Frieze--Kannan \cite{MR1723039} can be used to prove Szemer\'edi's regularity lemma, hence we expect a similar technique to exist in higher uniformities as well.

\subsection{Further open problems}
Another natural problem left open by our results is to close the gap between \cref{thm:main LB informal,thm:main UB formal}, i.e.\ to determine whether the true behavior for $3$-graphs with bounded VC$_2$ dimension is of tower type or of double-tower type. We conjecture that the lower bound is closer to the truth.
\begin{conjecture}\label{conj:tower vs double tower}
	The bound in \cref{thm:main UB formal} can be improved to $\ab{\Q_V} \leq \twr(\psi(\eta)^{-C})$.
\end{conjecture}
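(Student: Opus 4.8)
The plan is to locate, and then eliminate, the single source of the extra tower in \cref{thm:main UB formal}. Tracing through its proof, the bound $\ab{\Q_V}\leq \twr(\twr(\psi(\eta)^{-C}))$ comes entirely from the final application of \cref{lem:szemeredi for multiple graphs}: at that point we are already holding the chain partition $\Q^{\hom}$ with tower-many vertex parts $T$ and edge parts $L$ (this much is unavoidable, since $\Q^{\hom}$ is built from a cylinder chain partition, whose size is genuinely tower-type, followed by a Venn diagram refinement), and we then feed these tower-sized quantities into the bound $\twr(16\alpha^{-3}T^2 L)$, which is itself a tower function. Plugging a tower into a tower produces a double tower. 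Every other step --- the cylinder regularity lemma \cref{thm:hyper cylinder with delta}, the passage to the Venn diagram, the homogeneity-preserving steps --- only ever blows the number of parts up by an exponential, and an exponential of a tower is still a tower of (one more unit of) height. So the conjecture amounts to the slogan: one should never be forced to feed a tower-sized object into a tower-type regularity lemma.

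The most promising route is therefore to avoid the Venn-diagram-then-Szemer\'edi combination entirely, and instead arrange for the cylinder regularity lemma itself to output a genuine chain partition with quasirandom edge graphs. Concretely, one would try to interleave the two operations inside the energy-increment iteration of the proof of \cref{thm:hyper cylinder with delta}: at each of the $O(t^6/\eta^3)$ steps, rather than carrying along a cylinder chain partition and re-regularising it with \cref{lem:graph cylinder}, one carries along a \emph{genuine} chain partition, re-regularising its edge graphs after each refinement. The key point is that both \cref{lem:graph cylinder} and the multi-graph Szemer\'edi lemma cost only an \emph{exponential} in the number of graphs being regularised; so even if, at step $\tau$, there are tower-many ($B(\tau)$) edge graphs to regularise, the vertex partition only grows by a factor $2^{\poly(B(\tau))}$, which keeps the running bound of tower type (of height $O(\tau)$) rather than double-tower type. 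Since $\psi$ is polynomial and can be chosen polynomially small in $\eta$, the quasirandomness parameter stays polynomially bounded throughout, exactly as in the graph case, and the homogeneity of the chains (relative density in $[0,\eta]\cup[1-\eta,1]$) is robust and survives any refinement, by \cref{lem:arbitrary partition keeps 01}.

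The main obstacle is precisely the step that makes this plan circular. Converting a cylinder chain partition into a genuine chain partition is done by the Venn diagram, and the Venn diagram cuts each edge graph of the cylinder partition into pieces indexed by the membership pattern of the endpoints in the various cylinders; these pieces can be an arbitrarily small fraction of the original, and quasirandomness of a graph is \emph{not} inherited by small subgraphs. Re-regularising those small pieces with the graph cylinder regularity lemma lands us back in a cylinder partition, and iterating this conversion never terminates; re-regularising with Szemer\'edi costs a full tower applied to a tower-sized input, which is exactly the double tower we are trying to avoid. Breaking this loop seems to require a genuinely new ingredient: either a ``compression'' lemma that, given a cylinder chain partition with quasirandom edge graphs, produces a genuine chain partition of comparable (exponentially larger) size whose edge graphs are quasirandom \emph{and} whose vertex parts are not too small --- perhaps exploiting that one only needs this for the special homogeneous structure at hand, where relative densities are already near $0$ or $1$ --- or a fundamentally different, more structural proof that bypasses cylinder partitions entirely, e.g.\ a quantitative refinement of the structure theorems of Chernikov--Towsner \cite{2010.00726} or Terry--Wolf \cite{2111.01737} for $3$-graphs of bounded VC$_2$ dimension, or a direct stability/induced-Ramsey argument built around the cone constructions $n\otimes G$ underlying the matching lower bound \cref{prop:terry LB}. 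Absent such an ingredient one should also take seriously the possibility that the double tower is the truth, in which case the effort should shift to adapting the Moshkovitz--Shapira construction \cite{MR4025519} to the bounded-VC$_2$ setting to improve the lower bound instead.
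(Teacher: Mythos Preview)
This statement is a \emph{conjecture}, not a theorem: the paper does not prove it, and explicitly leaves it open in the concluding remarks. So there is no ``paper's own proof'' to compare against. Your proposal is likewise not a proof; it is a (largely accurate) diagnosis of where the second tower comes from, a sketch of an interleaved-iteration approach, and then an honest acknowledgment that this approach is circular for exactly the reason you identify --- the Venn diagram step destroys quasirandomness of the edge graphs, and recovering it costs a tower applied to a tower-sized input.

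Your analysis of the obstacle is correct and matches the paper's own understanding: the double tower arises solely from feeding the tower-sized $T,L$ into \cref{lem:szemeredi for multiple graphs}. But your proposal does not overcome this obstacle, and you say so yourself. For comparison, the only route the paper suggests toward \cref{conj:tower vs double tower} is the one sentence after \cref{conj:graph partition}: if one could establish a polynomial-size $\varepsilon$-graph partition (\cref{conj:graph partition}), then a single application of Szemer\'edi's regularity lemma to those $\poly(1/\varepsilon)$ graphs, combined with \cref{lem:arbitrary partition keeps 01}, would give a single-tower bound. This is a different (and stronger) hypothesis than anything in your sketch, and the paper does not know how to prove it either. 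In short: neither you nor the paper has a proof, and your write-up should be presented as a discussion of obstacles rather than as a proof attempt.
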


We recall that throughout this paper, we have worked with Gowers's notion of hypergraph regularity, because this notion is known to support a counting lemma with polynomial bounds (concretely, we required that in \cref{lem:induced counting,lem:all cylinders 01}, $\eta$ and $\psi$ depend polynomially on $\gamma$, and that $\psi$ itself is a polynomial). As far as we are aware, all known proofs of the counting lemma for the regularity notion of R\"odl et al.\ use, as an intermediate step of the process, an application of the hypergraph regularity lemma. As such, the counting lemma requires terrible bounds. However, it is not clear if this is the truth, or merely an artifact of the known proofs.
\begin{question}
	Can one prove a counting lemma with polynomial bounds for other notions of hypergraph regularity?
\end{question}

Finally, we remark that \cref{thm:main UB informal} gives a rough structural characterization of all $3$-graphs with bounded VC$_2$ dimension: in some sense, the only way to obtain such a $3$-graph is to start with some graph, and include all of its triangles as hyperedges. More precisely, \cref{thm:main UB informal} implies that if $\h$ has bounded VC$_2$ dimension, then it is constructed in such a way from a bounded number of graphs, apart from a small amount of ``noise''; this is precisely the meaning of the fact that $\h$ has a regularity partition in which almost all triples have relative density close to $0$ or $1$. 

From the perspective of such a structural characterization, it is no longer necessary to demand that the graphs arising in the partition are themselves highly quasirandom (recall that in the setting of hypergraph regularity, and specifically the counting lemma, such an assumption is absolutely crucial). To make this notion precise, let us assume for simplicity that $\h$ is tripartite with parts $X,Y,Z$, each of size $n$. We then define a \emph{$\varepsilon$-graph partition} of $\h$ to be partitions $X\times Y = E_1 \sqcup \dots \sqcup E_M, X \times Z = F_1 \sqcup \dots \sqcup F_M, Y \times Z = G_1 \sqcup \dots \sqcup G_M$, as well as a function $f:[M]^3 \to \{0,1\}$, with the property that for all but $\varepsilon n^3$ choices of $(x,y,z) \in X \times Y \times Z$, we have
\[
	xyz \in E(\h) \Longleftrightarrow f(i,j,k)=1, \qquad \text{where }xy \in E_i, xz \in F_j, yz \in G_k.
\]
\cref{thm:main UB informal} implies that if $\h$ has bounded VC$_2$ dimension, then it admits an $\varepsilon$-graph partition with $M$ bounded by a tower-type function. Note that although the results of Terry \cite{MR4662634,2404.02030} imply polynomial bounds on the number of edge parts in a regularity decomposition, they only give wowzer-type bounds on the size of an $\varepsilon$-graph partition, because they still require a wowzer-type number of vertex parts, and hence a wowzer-type number of graphs in such a partition. However, we conjecture that the bound can be improved to polynomial.
\begin{conjecture}\label{conj:graph partition}
	If $\h$ has bounded VC$_2$ dimension, then it has an $\varepsilon$-graph partition with $M =\poly(1/\varepsilon)$ many graphs. 
\end{conjecture}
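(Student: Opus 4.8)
Since \cref{conj:graph partition} is stated as an open problem, the following is a plan of attack rather than a proof.

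The plan is to adapt the cylinder-regularity-plus-induced-counting strategy used to prove \cref{thm:main UB formal}, but to read off the graph partition directly from the cylinder partition, without ever passing to a genuine vertex partition of $V(\h)$ --- it is precisely that passage (via Szemer\'edi's lemma, \cref{lem:szemeredi for multiple graphs}) that costs the extra tower in \cref{thm:main UB formal}, and the edge partition produced there is already only single-tower rather than polynomial. We may assume $\h$ is tripartite with parts $X,Y,Z$, the general case reducing to this as usual. Applying \cref{thm:hyper cylinder with delta} and \cref{lem:all cylinders 01} exactly as in \cref{lem:cylinder partition 01}, we obtain a cylinder chain partition $\p=(\p_V,\p_E)$ in which almost every triple lies in a chain whose bipartite graphs are quasirandom and whose relative density lies in $[0,\varepsilon]\cup[1-\varepsilon,1]$; equivalently, $\h$ agrees up to $\varepsilon n^3$ edges with a $3$-graph that is completely determined by which cylinder a triple lies in together with the edge parts of that cylinder. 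This already furnishes an $\varepsilon$-graph partition, but of tower-type size, since Venn-diagramming the vertex cylinders to get genuine global edge parts blows the count up to a tower.

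The heart of the matter is therefore to show that bounded VC$_2$ dimension forces this tower-type edge structure to collapse to polynomial size. The key object is the family $\{L_z : z\in Z\}$ of links, viewed as bipartite graphs on $X\times Y$; bounded VC$_2$ dimension says exactly that this family shatters no $d\times d$ grid. I would aim to prove a structure theorem of the form: such a family is $\varepsilon$-close (on average over $z$) to one of the shape $L'_z(x,y)=f\bigl(\phi_{XY}(x,y),\,\phi_{XZ}(x,z),\,\phi_{YZ}(y,z)\bigr)$, where $\phi_{XY},\phi_{XZ},\phi_{YZ}$ partition the three pair-sets into $\poly(1/\varepsilon)$ classes each and $f$ is a fixed $\{0,1\}$-valued function --- which is precisely the assertion of the conjecture. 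The cylinder regularity lemma and the induced counting lemma reduce this to a finite statement about the ``reduced'' tripartite structure on the (tower-many) edge parts, and the remaining task is to show that after this reduction only $\poly(1/\varepsilon)$ distinct ``types'' of edge part occur, a type being essentially the density of the (quasirandom) edge part together with its $\{0,1\}$-pattern of interactions with the edge parts in the other two coordinates.

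I expect this collapsing step to be the main obstacle, and it is exactly where the proof of \cref{thm:main UB formal} is forced to spend a tower: there is no Haussler-type packing lemma for VC$_2$ dimension, as the random example in~\eqref{eq:complete bipartite links} shows, so one cannot naively bound the number of edge-part types. The hope is that the extra structure available in this setting --- every relevant bipartite edge part is quasirandom, and the relative densities are homogeneous --- suffices to rescue a packing bound that fails in full generality. Concretely, if two quasirandom edge parts of very different densities both occurred and their interaction patterns with many third-coordinate cells were incompatible, one should be able to splice together appropriate quasirandom pieces and invoke \cref{lem:induced counting} to produce a tripartitely induced copy of the forbidden $\V$, contradicting bounded VC$_2$ dimension; converting this heuristic into a quantitative bound on the number of types is the crux.

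As an alternative route, one might try to prove the conjecture by a direct energy-increment argument carried out at the level of edge partitions of $X\times Y$, $X\times Z$, $Y\times Z$, in the spirit of the Conlon--Fox proof of the cylinder regularity lemma, with the VC$_2$ hypothesis (through \cref{lem:induced counting}) supplying an a priori bound on the number of useful increments, thereby bypassing the vertex partition entirely. Either way, isolating the correct polynomial-size invariant of a quasirandom bipartite edge part in a VC$_2$-bounded $3$-graph is the decisive difficulty.
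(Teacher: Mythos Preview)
The paper does not prove \cref{conj:graph partition}; it is stated as an open problem, and the only remark the paper makes is that its truth would imply \cref{conj:tower vs double tower}. You correctly recognise this and offer a plan of attack rather than a proof, so there is nothing to compare against.

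As for the plan itself: you have accurately located the difficulty. The cylinder-regularity-plus-induced-counting machinery of \cref{lem:cylinder partition 01} does yield an $\varepsilon$-graph partition, but of tower size, and the entire content of the conjecture is the ``collapsing step'' you describe. However, your proposed structure theorem --- that the link family $\{L_z\}$ is $\varepsilon$-close to something of the form $f(\phi_{XY}(x,y),\phi_{XZ}(x,z),\phi_{YZ}(y,z))$ with $\poly(1/\varepsilon)$-valued $\phi$'s --- is not a step toward the conjecture but a literal restatement of it. Likewise, the heuristic that incompatible interaction patterns between quasirandom edge parts should yield a forbidden induced $\V$ via \cref{lem:induced counting} is the mechanism that already drives \cref{lem:all cylinders 01}; it forces individual chains to be homogeneous, but gives no handle on the \emph{number} of distinct chains, which is exactly what you need. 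The random-links construction \eqref{eq:complete bipartite links} shows that this obstacle is genuine: there the links are pairwise far apart and yet VC$_2$ dimension is $1$, so any argument must exploit the tripartite structure in a way that goes beyond pairwise comparison of links --- and neither of your two routes explains how.

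In short, your diagnosis is correct and your framing is reasonable, but the plan does not contain a new idea for the decisive step; it articulates the problem rather than attacking it. This is not a criticism so much as a confirmation that the conjecture is genuinely open.
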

Note that, if true, \cref{conj:graph partition} would imply \cref{conj:tower vs double tower}, as we could simply apply Szemer\'edi's regularity lemma to all the graphs in such an $\varepsilon$-graph partition, while maintaining the homogeneity property by \cref{lem:arbitrary partition keeps 01}.


\begin{thebibliography}{10}
\providecommand{\url}[1]{\texttt{#1}}
\providecommand{\urlprefix}{URL }
\providecommand{\eprint}[2][]{\url{#2}}

\bibitem{MR1804820}
N.~Alon, E.~Fischer, M.~Krivelevich, and M.~Szegedy, Efficient testing of large
  graphs, \emph{Combinatorica} \textbf{20} (2000), 451--476.

\bibitem{MR2341924}
N.~Alon, E.~Fischer, and I.~Newman, Efficient testing of bipartite graphs for
  forbidden induced subgraphs, \emph{SIAM J. Comput.} \textbf{37} (2007),
  959--976.

\bibitem{MR3943117}
N.~Alon, J.~Fox, and Y.~Zhao, Efficient arithmetic regularity and removal
  lemmas for induced bipartite patterns, \emph{Discrete Anal.}  (2019), Paper
  No. 3, 14pp.

\bibitem{MR4003389}
N.~Alon, R.~Livni, M.~Malliaris, and S.~Moran, Private {PAC} learning implies
  finite {L}ittlestone dimension, in \emph{S{TOC}'19---{P}roceedings of the
  51st {A}nnual {ACM} {SIGACT} {S}ymposium on {T}heory of {C}omputing}, ACM,
  New York, 2019,  852--860.

\bibitem{MR4177961}
M.~Amir, A.~Shapira, and M.~Tyomkyn, Two {Erd\H os--Hajnal}-type theorems in
  hypergraphs, \emph{J. Combin. Theory Ser. B} \textbf{146} (2021), 417--438.

\bibitem{2408.04165}
J.~Balogh, A.~Bernshteyn, M.~Delcourt, A.~Ferber, and H.~T. Pham, Sunflowers in
  set systems with small {VC}-dimension, 2024. Preprint available at
  arXiv:2408.04165.

\bibitem{MR2576797}
{\"{O}}.~Beyarslan, Random hypergraphs in pseudofinite fields, \emph{J. Inst.
  Math. Jussieu} \textbf{9} (2010), 29--47.

\bibitem{MR1072253}
A.~Blumer, A.~Ehrenfeucht, D.~Haussler, and M.~K. Warmuth, Learnability and the
  {V}apnik-{C}hervonenkis dimension, \emph{J. Assoc. Comput. Mach.} \textbf{36}
  (1989), 929--965.

\bibitem{MR4537270}
N.~Brukhim, D.~Carmon, I.~Dinur, S.~Moran, and A.~Yehudayoff, A
  characterization of multiclass learnability, in \emph{2022 {IEEE} 63rd
  {A}nnual {S}ymposium on {F}oundations of {C}omputer {S}cience---{FOCS} 2022},
  IEEE Computer Soc., Los Alamitos, CA, 2022,  943--955.

\bibitem{2403.08303}
M.~Buci\'c, J.~Fox, and H.~T. Pham, Equivalence between {Erd\H os--Hajnal} and
  polynomial {R\"odl} and {Nikiforov} conjectures, 2024. Preprint available at
  arXiv:2403.08303.

\bibitem{MR4258515}
A.~Chernikov and N.~Hempel, On {$n$}-dependent groups and fields {II},
  \emph{Forum Math. Sigma} \textbf{9} (2021), Paper No. e38, 51.

\bibitem{MR3952231}
A.~Chernikov, D.~Palacin, and K.~Takeuchi, On {$n$}-dependence, \emph{Notre
  Dame J. Form. Log.} \textbf{60} (2019), 195--214.

\bibitem{MR4350155}
A.~Chernikov and S.~Starchenko, Definable regularity lemmas for {NIP}
  hypergraphs, \emph{Q. J. Math.} \textbf{72} (2021), 1401--1433.

\bibitem{2010.00726}
A.~Chernikov and H.~Towsner, Hypergraph regularity and higher arity
  {VC}-dimension, 2020. Preprint available at arXiv:2010.00726.

\bibitem{MR1099803}
F.~R.~K. Chung, Regularity lemmas for hypergraphs and quasi-randomness,
  \emph{Random Structures Algorithms} \textbf{2} (1991), 241--252.

\bibitem{MR1054011}
F.~R.~K. Chung, R.~L. Graham, and R.~M. Wilson, Quasi-random graphs,
  \emph{Combinatorica} \textbf{9} (1989), 345--362.

\bibitem{MR4382479}
G.~Conant, A.~Pillay, and C.~Terry, Structure and regularity for subsets of
  groups with finite {VC}-dimension, \emph{J. Eur. Math. Soc. (JEMS)}
  \textbf{24} (2022), 583--621.

\bibitem{MR2989432}
D.~Conlon and J.~Fox, Bounds for graph regularity and removal lemmas,
  \emph{Geom. Funct. Anal.} \textbf{22} (2012), 1191--1256.

\bibitem{MR3784260}
D.~Conlon, J.~Fox, and B.~Sudakov, Hereditary quasirandomness without
  regularity, \emph{Math. Proc. Cambridge Philos. Soc.} \textbf{164} (2018),
  385--399.

\bibitem{MR4482092}
D.~Conlon, J.~Fox, and Y.~Wigderson, Ramsey numbers of books and
  quasirandomness, \emph{Combinatorica} \textbf{42} (2022), 309--363.

\bibitem{MR2864650}
D.~Conlon, H.~H\`an, Y.~Person, and M.~Schacht, Weak quasi-randomness for
  uniform hypergraphs, \emph{Random Structures Algorithms} \textbf{40} (2012),
  1--38.

\bibitem{MR1333857}
R.~A. Duke, H.~Lefmann, and V.~R\"{o}dl, A fast approximation algorithm for
  computing the frequencies of subgraphs in a given graph, \emph{SIAM J.
  Comput.} \textbf{24} (1995), 598--620.

\bibitem{MR4170446}
J.~Fox and R.~Li, On edge-ordered {R}amsey numbers, \emph{Random Structures
  Algorithms} \textbf{57} (2020), 1174--1204.

\bibitem{MR3737374}
J.~Fox and L.~M. Lov\'asz, A tight lower bound for {S}zemer\'edi's regularity
  lemma, \emph{Combinatorica} \textbf{37} (2017), 911--951.

\bibitem{MR3646875}
J.~Fox, J.~Pach, A.~Sheffer, A.~Suk, and J.~Zahl, A semi-algebraic version of
  {Z}arankiewicz's problem, \emph{J. Eur. Math. Soc. (JEMS)} \textbf{19}
  (2017), 1785--1810.

\bibitem{MR3585030}
J.~Fox, J.~Pach, and A.~Suk, A polynomial regularity lemma for semialgebraic
  hypergraphs and its applications in geometry and property testing, \emph{SIAM
  J. Comput.} \textbf{45} (2016), 2199--2223.

\bibitem{MR3943496}
J.~Fox, J.~Pach, and A.~Suk, {Erd\H{o}s}-{H}ajnal conjecture for graphs with
  bounded {VC}-dimension, \emph{Discrete Comput. Geom.} \textbf{61} (2019),
  809--829.

\bibitem{MR4357431}
J.~Fox, J.~Pach, and A.~Suk, Bounded {VC}-dimension implies the {S}chur-{Erd\H
  os} conjecture, \emph{Combinatorica} \textbf{41} (2021), 803--813.

\bibitem{MR2455625}
J.~Fox and B.~Sudakov, Induced {R}amsey-type theorems, \emph{Adv. Math.}
  \textbf{219} (2008), 1771--1800.

\bibitem{MR1884430}
P.~Frankl and V.~R\"odl, Extremal problems on set systems, \emph{Random
  Structures Algorithms} \textbf{20} (2002), 131--164.

\bibitem{MR1723039}
A.~Frieze and R.~Kannan, Quick approximation to matrices and applications,
  \emph{Combinatorica} \textbf{19} (1999), 175--220.

\bibitem{2506.15582}
L.~Gishboliner, A.~Shapira, and Y.~Wigderson, Is it easy to regularize a
  hypergraph with easy links?, 2025. Preprint available at arXiv:2506.15582.

\bibitem{MR1445389}
W.~T. Gowers, Lower bounds of tower type for {S}zemer\'{e}di's uniformity
  lemma, \emph{Geom. Funct. Anal.} \textbf{7} (1997), 322--337.

\bibitem{MR2195580}
W.~T. Gowers, Quasirandomness, counting and regularity for 3-uniform
  hypergraphs, \emph{Combin. Probab. Comput.} \textbf{15} (2006), 143--184.

\bibitem{MR2373376}
W.~T. Gowers, Hypergraph regularity and the multidimensional {S}zemer\'{e}di
  theorem, \emph{Ann. of Math. (2)} \textbf{166} (2007), 897--946.

\bibitem{MR2153903}
B.~Green, A {S}zemer\'edi-type regularity lemma in abelian groups, with
  applications, \emph{Geom. Funct. Anal.} \textbf{15} (2005), 340--376.

\bibitem{MR1313896}
D.~Haussler, Sphere packing numbers for subsets of the {B}oolean {$n$}-cube
  with bounded {V}apnik-{C}hervonenkis dimension, \emph{J. Combin. Theory Ser.
  A} \textbf{69} (1995), 217--232.

\bibitem{MR3509704}
N.~Hempel, On {$n$}-dependent groups and fields, \emph{MLQ Math. Log. Q.}
  \textbf{62} (2016), 215--224.

\bibitem{MR2373360}
E.~Hrushovski, Y.~Peterzil, and A.~Pillay, Groups, measures, and the {NIP},
  \emph{J. Amer. Math. Soc.} \textbf{21} (2008), 563--596.

\bibitem{2502.09576}
X.~Huang, H.~Liu, M.~Rong, and Z.~Xu, Interpolating chromatic and homomorphism
  thresholds, 2025. Preprint available at arXiv:2502.09576.

\bibitem{MR4777869}
O.~Janzer and C.~Pohoata, On the {Z}arankiewicz problem for graphs with bounded
  {VC}-dimension, \emph{Combinatorica} \textbf{44} (2024), 839--848.

\bibitem{MR2595699}
Y.~Kohayakawa, B.~Nagle, V.~R\"odl, and M.~Schacht, Weak hypergraph regularity
  and linear hypergraphs, \emph{J. Combin. Theory Ser. B} \textbf{100} (2010),
  151--160.

\bibitem{MR2815610}
L.~Lov\'{a}sz and B.~Szegedy, Regularity partitions and the topology of
  graphons, in \emph{An irregular mind}, \emph{Bolyai Soc. Math. Stud.},
  vol.~21, J\'{a}nos Bolyai Math. Soc., Budapest, 2010,  415--446.

\bibitem{1007.1670}
T.~\L{}uczak and S.~Thomass\'e, Coloring dense graphs via {VC}-dimension, 2010.
  Preprint available at arXiv:1007.1670.

\bibitem{MR3145742}
M.~Malliaris and S.~Shelah, Regularity lemmas for stable graphs, \emph{Trans.
  Amer. Math. Soc.} \textbf{366} (2014), 1551--1585.

\bibitem{MR1899299}
J.~Matou\v{s}ek, \emph{Lectures on discrete geometry}, \emph{Graduate Texts in
  Mathematics}, vol. 212, Springer-Verlag, New York, 2002.

\bibitem{MR4025519}
G.~Moshkovitz and A.~Shapira, A tight bound for hypergraph regularity,
  \emph{Geom. Funct. Anal.} \textbf{29} (2019), 1531--1578.

\bibitem{MR4816408}
B.~Nagle, V.~R\"odl, and M.~Schacht, Equivalent regular partitions of
  three-uniform hypergraphs, \emph{Random Structures Algorithms} \textbf{65}
  (2024), 703--718.

\bibitem{MR0837962}
V.~R\"{o}dl, On universality of graphs with uniformly distributed edges,
  \emph{Discrete Math.} \textbf{59} (1986), 125--134.

\bibitem{MR2167756}
V.~R\"{o}dl, B.~Nagle, J.~Skokan, M.~Schacht, and Y.~Kohayakawa, The hypergraph
  regularity method and its applications, \emph{Proc. Natl. Acad. Sci. USA}
  \textbf{102} (2005), 8109--8113.

\bibitem{MR2798368}
V.~R\"odl and M.~Schacht, Regularity lemmas for graphs, in \emph{Fete of
  combinatorics and computer science}, \emph{Bolyai Soc. Math. Stud.}, vol.~20,
  J\'anos Bolyai Math. Soc., Budapest, 2010,  287--325.

\bibitem{MR307902}
N.~Sauer, On the density of families of sets, \emph{J. Combin. Theory Ser. A}
  \textbf{13} (1972), 145--147.

\bibitem{MR307903}
S.~Shelah, A combinatorial problem; stability and order for models and theories
  in infinitary languages, \emph{Pacific J. Math.} \textbf{41} (1972),
  247--261.

\bibitem{MR3273451}
S.~Shelah, Strongly dependent theories, \emph{Israel J. Math.} \textbf{204}
  (2014), 1--83.

\bibitem{MR3666349}
S.~Shelah, Definable groups for dependent and 2-dependent theories,
  \emph{Sarajevo J. Math.} \textbf{13} (2017), 3--25.

\bibitem{MR369312}
E.~Szemer\'edi, On sets of integers containing no {$k$} elements in arithmetic
  progression, \emph{Acta Arith.} \textbf{27} (1975), 199--245.

\bibitem{MR0540024}
E.~Szemer\'{e}di, Regular partitions of graphs, in \emph{Probl\`emes
  combinatoires et th\'{e}orie des graphes ({C}olloq. {I}nternat. {CNRS},
  {U}niv. {O}rsay, {O}rsay, 1976)}, \emph{Colloq. Internat. CNRS}, vol. 260,
  CNRS, Paris, 1978,  399--401.

\bibitem{MR3787371}
C.~Terry, {$VC_{\ell}$}-dimension and the jump to the fastest speed of a
  hereditary {$\mathcal{L}$}-property, \emph{Proc. Amer. Math. Soc.}
  \textbf{146} (2018), 3111--3126.

\bibitem{MR4662634}
C.~Terry, An improved bound for regular decompositions of 3-uniform hypergraphs
  of bounded {$\rm VC_2$}-dimension, \emph{Model Theory} \textbf{2} (2023),
  325--356.

\bibitem{2404.01274}
C.~Terry, Growth of regular partitions 1: {I}mproved bounds for small slicewise
  {VC}-dimension, 2024. Preprint available at arXiv:2404.01274.

\bibitem{2404.01293}
C.~Terry, Growth of regular partitions 2: {W}eak regularity, 2024. Preprint
  available at arXiv:2404.01293.

\bibitem{2404.02024}
C.~Terry, Growth of regular partitions 3: {S}trong regularity and the vertex
  partition, 2024. Preprint available at arXiv:2404.02024.

\bibitem{2404.02030}
C.~Terry, Growth of regular partitions 4: {S}trong regularity and the pairs
  partition, 2024. Preprint available at arXiv:2404.02030.

\bibitem{MR3919562}
C.~Terry and J.~Wolf, Stable arithmetic regularity in the finite field model,
  \emph{Bull. Lond. Math. Soc.} \textbf{51} (2019), 70--88.

\bibitem{2111.01737}
C.~Terry and J.~Wolf, Irregular triads in $3$-uniform hypergraphs, \emph{Mem.
  Amer. Math. Soc.}  (2025), to appear. Preprint available at arXiv:2111.01737.

\bibitem{10.1145/1968.1972}
L.~G. Valiant, A theory of the learnable, \emph{Comm. ACM} \textbf{27} (1984),
  1134--1142.

\bibitem{MR288823}
V.~N. Vapnik and A.~J. \v{C}ervonenkis, The uniform convergence of frequencies
  of the appearance of events to their probabilities, \emph{Teor. Verojatnost.
  i Primenen.} \textbf{16} (1971), 264--279.

\bibitem{yuvalNotes}
Y.~Wigderson, Ramsey theory—lecture notes, 2024. Available online at
  \url{https://n.ethz.ch/~ywigderson/math/static/RamseyTheory2024LectureNotes.pdf}.

\bibitem{MR4603631}
Y.~Zhao, \emph{Graph theory and additive combinatorics---exploring structure
  and randomness}, Cambridge University Press, Cambridge, 2023.

\end{thebibliography}

\appendix
\section{Proofs of technical lemmas}

\subsection{Proof of Lemma \ref{lem:q properties}}\label{appendix:mean-squared}
\begin{proof}[Proof of \cref{lem:q properties}]
	For \cref{lemit:G01}, we first note that
	the definition \eqref{eq:mean-squared} of $q(\p_E(G))$ is a sum of non-negative numbers, hence is $q(\p_E(G))$ is certainly non-negative. For the upper bound, we have that $d(\h \mid Z_{ij} \cup Z_{ik} \cup Z_{jk}) \leq 1$ for all $Z_{ij},Z_{ik},Z_{jk}$, and therefore
	\begin{align*}
		q(\p_E(G)) &= \sum_{\substack{Z_{ij} \in \p_E(G;Y_i \times Y_j)\\Z_{ik} \in \p_E(G;Y_i \times Y_k)\\Z_{jk} \in \p_E(G;Y_j \times Y_k)}} \frac{\ab{\Delta(Z_{ij}\cup Z_{ik}\cup Z_{jk})}}{\ab{\Delta(G)}} d(\h \mid Z_{ij}\cup Z_{ik}\cup Z_{jk})^2\\
		&\leq \sum_{\substack{Z_{ij} \in \p_E(G;Y_i \times Y_j)\\Z_{ik} \in \p_E(G;Y_i \times Y_k)\\Z_{jk} \in \p_E(G;Y_j \times Y_k)}} \frac{\ab{\Delta(Z_{ij}\cup Z_{ik}\cup Z_{jk})}}{\ab{\Delta(G)}}\\
		&=1,
	\end{align*}
	as every triangle in $G$ appears in exactly one tripartite subgraph $Z_{ij} \cup Z_{ik} \cup Z_{jk}$.
	This proves \cref{lemit:G01}

	For \cref{lemit:P0t3}, we argue similarly. First, the lower bound $q(\p) \geq 0$ is again immediate as $q(\p)$ is defined as a sum of non-negative numbers. For the upper bound, we have
	\begin{align*}
		q(\p) &= \sum_{Y \in \p_V} \left( \prod_{i=1}^t \frac{\ab{Y_i}}{\ab{X_i}} \right) \sum_{1 \leq i<j<k\leq t} q(\p_E(K(Y_i,Y_j,Y_k)))\\
		&\leq \sum_{Y \in \p_V} \left( \prod_{i=1}^t \frac{\ab{Y_i}}{\ab{X_i}} \right) \sum_{1 \leq i<j<k\leq t} 1\\
		&= \binom t3 \sum_{Y \in \p_V} \left( \prod_{i=1}^t \frac{\ab{Y_i}}{\ab{X_i}} \right)\\
		&= \binom t3,
	\end{align*}
	where the first inequality uses \cref{lemit:G01} applied to the complete tripartite graph $K(Y_i,Y_j,Y_k)$, and the final equality uses the fact that $\p_V$ is a vertex cylinder partition of $X_1\dtimes X_t$.

	It remains to prove \cref{lemit:G refinement,lemit:P refinement}, namely the monotonicity of the mean-squared density with respect to refinements. Both of these are consequences of the Cauchy--Schwarz inequality, as follows.

	Let $(G,\h)$ be a tripartite chain with vertex set $Y_1 \cup Y_2 \cup Y_3$, and let $\p_E(G),\p_E'(G)$ be two edge partitions of it with $\p_E'(G)$ refining $\p_E(G)$. Fix some $Z_{12} \in \p_E(G;Y_1\times Y_2), Z_{13} \in \p_E(G;Y_1 \times Y_3), Z_{23} \in \p_E(G;Y_2 \times Y_3)$. As $\p_E'(G)$ is a refinement of $\p_E(G)$, in $\p_E'(G)$ each of these bipartite graphs is further partitioned, say into $Z_{12}^1,\dots,Z_{12}^m, Z_{13}^1,\dots,Z_{13}^m, Z_{23}^1,\dots,Z_{23}^m$; we may assume that the number of parts in all three cases equals $m$, by potentially letting some of these bipartite graphs be empty. We now have that
	\begin{align*}
		d(\h\mid Z_{12}\cup Z_{13}\cup Z_{23})&=\frac{\ab{E(\h)\cap \Delta(Z_{12}\cup Z_{13}\cup Z_{23})}}{\ab{\Delta(Z_{12}\cup Z_{13}\cup Z_{23})}}\\
		&=\sum_{a,b,c=1}^m \frac{\ab{E(\h\cap \Delta(Z_{12}^a\cup Z_{13}^b\cup Z_{23}^c))}}{\ab{\Delta(Z_{12}\cup Z_{13}\cup Z_{23})}}
		\\
		&=\sum_{a,b,c=1}^m \frac{\ab{\Delta(Z_{12}^a\cup Z_{13}^b\cup Z_{23}^c)}}{\ab{\Delta(Z_{12}\cup Z_{13}\cup Z_{23})}} d(\h \mid Z_{12}^a \cup Z_{13}^b \cup Z_{23}^c),
	\end{align*}
	where the final step uses the definition of the relative density $d(\h \mid Z_{12}^a \cup Z_{13}^b \cup Z_{23}^c)$.
	Therefore, by Cauchy--Schwarz,
	\begin{align*}
		d(\h\mid& Z_{12}\cup Z_{13}\cup Z_{23})^2 =\left( \sum_{a,b,c=1}^m \frac{\ab{\Delta(Z_{12}^a\cup Z_{13}^b\cup Z_{23}^c)}}{\ab{\Delta(Z_{12}\cup Z_{13}\cup Z_{23})}} d(\h \mid Z_{12}^a \cup Z_{13}^b \cup Z_{23}^c) \right)^2\\
		&\leq \left( \sum_{a,b,c=1}^m \frac{\ab{\Delta(Z_{12}^a\cup Z_{13}^b\cup Z_{23}^c)}}{\ab{\Delta(Z_{12}\cup Z_{13}\cup Z_{23})}} \right) \left( \sum_{a,b,c=1}^m \frac{\ab{\Delta(Z_{12}^a\cup Z_{13}^b\cup Z_{23}^c)}}{\ab{\Delta(Z_{12}\cup Z_{13}\cup Z_{23})}} d(\h \mid Z_{12}^a \cup Z_{13}^b \cup Z_{23}^c)^2 \right)\\
		&=\sum_{a,b,c=1}^m \frac{\ab{\Delta(Z_{12}^a\cup Z_{13}^b\cup Z_{23}^c)}}{\ab{\Delta(Z_{12}\cup Z_{13}\cup Z_{23})}} d(\h \mid Z_{12}^a \cup Z_{13}^b \cup Z_{23}^c)^2,
	\end{align*}
	where in the final step we use that every triangle in $\Delta(Z_{12}\cup Z_{13}\cup Z_{23})$ appears in exactly one subgraph $Z_{12}^a\cup Z_{13}^b\cup Z_{23}^c$. Plugging this inequality into the definition \eqref{eq:mean-squared}, we find that
	\begin{align*}
		q(\p_E&(G)) = \sum_{\substack{Z_{12} \in \p_E(G;Y_1 \times Y_2)\\Z_{13} \in \p_E(G;Y_1 \times Y_3)\\Z_{23} \in \p_E(G;Y_2 \times Y_3)}} \frac{\ab{\Delta(Z_{12} \cup Z_{13} \cup Z_{23})}}{\ab{\Delta(G)}} d(\h\mid Z_{12}\cup Z_{13}\cup Z_{23})^2\\
		&\leq \sum_{\substack{Z_{12} \in \p_E(G;Y_1 \times Y_2)\\Z_{13} \in \p_E(G;Y_1 \times Y_3)\\Z_{23} \in \p_E(G;Y_2 \times Y_3)}} \frac{\ab{\Delta(Z_{12} \cup Z_{13} \cup Z_{23})}}{\ab{\Delta(G)}}\sum_{a,b,c=1}^m \frac{\ab{\Delta(Z_{12}^a\cup Z_{13}^b\cup Z_{23}^c)}}{\ab{\Delta(Z_{12}\cup Z_{13}\cup Z_{23})}} d(\h \mid Z_{12}^a \cup Z_{13}^b \cup Z_{23}^c)^2\\
		&= \sum_{\substack{Z_{12} \in \p_E(G;Y_1 \times Y_2)\\Z_{13} \in \p_E(G;Y_1 \times Y_3)\\Z_{23} \in \p_E(G;Y_2 \times Y_3)}} \sum_{a,b,c=1}^m \frac{\ab{\Delta(Z_{12}^a\cup Z_{13}^b\cup Z_{23}^c)}}{\ab{\Delta(G)}} d(\h \mid Z_{12}^a \cup Z_{13}^b \cup Z_{23}^c)^2\\
		&= \sum_{\substack{Z'_{12} \in \p_E'(G;Y_1 \times Y_2)\\Z'_{13} \in \p_E'(G;Y_1 \times Y_3)\\Z'_{23} \in \p_E'(G;Y_2 \times Y_3)}} \frac{\ab{\Delta(Z'_{12} \cup Z'_{13} \cup Z'_{23})}}{\ab{\Delta(G)}} d(\h\mid Z'_{12}\cup Z'_{13}\cup Z'_{23})^2\\
		&= q(\p_E'(G)),
	\end{align*}
	where in the penultimate equality we simply note that summing over all parts in $\p_E(G)$, and then over all their subparts, is the same as summing over all parts in $\p_E'(G)$. This proves \cref{lemit:G refinement}.

	Note that in the special case that $G$ is complete tripartite, the result just proved shows that $q(\p_E'(K(Y_i,Y_j,Y_k)))\geq q(\p_E(K(Y_i,Y_j,Y_k)))$ whenever $\p_E'(K(Y_i,Y_j,Y_k))$ is an edge partition of a vertex cylinder $K(Y_i,Y_j,Y_k)$ which refines another edge partition $\p_E(K(Y_i,Y_j,Y_k))$.
	
	We now turn to \cref{lemit:P refinement}, so we fix a $t$-partite $3$-graph $\h$ as well as two cylinder chain partitions $\p,\p'$ of it, with $\p'$ refining $\p$.
	Lt $\wh \p$ be the cylinder chain partition whose vertex partition is $\p_V'$, but whose edge partitions are simply obtained by restricting the original edge partition $\p_E$ to the new vertex cylinders. That is, for all $Y' \in \p_V'$, the edge partition $\wh{\p_E}(Y_i' \times Y_j')$ is $\p_E(Y_i \times Y_j)|_{Y_i' \times Y_j'}$, where $Y$ is the unique cylinder in $\p_V$ containing $Y'$. Note that $\p'$ is obtained from $\wh \p$ by only refining the edge partition. 
	As a consequence, we find from the above that
	\begin{align*}
		q(\p') &= \sum_{Y \in \p_V'} \left( \prod_{\ell=1}^t \frac{\ab{Y_\ell}}{\ab{X_\ell}} \right) \sum_{1 \leq i<j <k\leq t} q(\p_E'(K(Y_i,Y_j,Y_k)))\\
		&\geq \sum_{Y \in \p_V'} \left( \prod_{\ell=1}^t \frac{\ab{Y_\ell}}{\ab{X_\ell}} \right) \sum_{1 \leq i<j <k\leq t} q(\wh {\p_E}(K(Y_i,Y_j,Y_k)))\\
		&=q(\wh \p)
	\end{align*}
	Therefore, it suffices to prove that $q(\wh \p) \geq q(\p)$. For this, fix some cylinder $Y \in \p_V$. As $\p_V'$ refines $\p_V$, there are cylinders $Y^1,\dots,Y^m\in \p_V'$ which partition $Y$. 
	For $1 \leq i <j\leq t$ and $Z_{ij} \in \p_E(Y_i \times Y_j)$, let $Z^r_{ij} \in \wh{\p_E}(Y_i^r \times Y_j^r)$ be the restriction of $Z_{ij}$ to the vertex subsets $Y_i^r \cup Y_j^r$. 
	
	For the moment, we focus on $Y_1 \cup Y_2 \cup Y_3$. 
	Note that for any $Z_{12} \in \p_E(Y_1 \times Y_2), Z_{13} \in \p_E(Y_1 \times Y_3), Z_{23} \in \p_E(Y_2 \times Y_3)$, we have
	\[
		\ab{\Delta(Z_{12}\cup Z_{13}\cup Z_{23})} \cdot \prod_{\ell=4}^t \ab{Y_\ell} = \sum_{r=1}^m \ab{\Delta(Z_{12}^r \cup Z_{13}^r \cup Z_{23}^r)} \cdot \prod_{\ell=4}^t \ab{Y_\ell^r} ,
	\]
	as both sides count the number of $t$-tuples in $Y_1 \dtimes Y_t$ in which the first three vertices span a triangle in $Z_{12}\cup Z_{13}\cup Z_{23}$. Similarly, 
	\[
		\ab{E(\h) \cap \Delta(Z_{12}\cup Z_{13}\cup Z_{23})} \cdot \prod_{\ell=4}^t \ab{Y_\ell} = \sum_{r=1}^m \ab{E(\h) \cap \Delta(Z_{12}^r\cup Z_{13}^r\cup Z_{23}^r)} \cdot \prod_{\ell=4}^t \ab{Y_\ell^r} ,
	\]
	as both sides count the number of $t$-tuples in $Y_1 \dtimes Y_t$ in which the first three vertices span a triangle in $Z_{12}\cup Z_{13} \cup Z_{23}$ and form an edge of $\h$.
	Therefore, we have that
	\begin{align*}
		d(\h \mid Z_{12}\cup Z_{13} \cup Z_{23}) &= \frac{\ab{E(\h)\cap \Delta(Z_{12}\cup Z_{13} \cup Z_{23})}}{\ab{\Delta(Z_{12}\cup Z_{13}\cup Z_{23})}}\\
		&= \frac{\ab{E(\h)\cap \Delta(Z_{12}\cup Z_{13} \cup Z_{23})}\cdot \prod_{\ell=4}^t \ab{Y_\ell}}{\ab{\Delta(Z_{12}\cup Z_{13}\cup Z_{23})}\cdot \prod_{\ell=4}^t \ab{Y_\ell}}\\
		&=\sum_{r=1}^m \frac{\ab{E(\h)\cap \Delta(Z_{12}^r\cup Z_{13}^r \cup Z_{23}^r)}\cdot \prod_{\ell=4}^t \ab{Y_\ell^r}}{\ab{\Delta(Z_{12}\cup Z_{13}\cup Z_{23})}\cdot \prod_{\ell=4}^t \ab{Y_\ell}} \\
		&=\sum_{r=1}^m \frac{\ab{\Delta(Z^r_{12}\cup Z^r_{13}\cup Z^r_{23})}\cdot \prod_{\ell=4}^t \ab{Y_\ell^r}}{\ab{\Delta(Z_{12}\cup Z_{13}\cup Z_{23})}\cdot \prod_{\ell=4}^t \ab{Y_\ell}} d(\h\mid Z_{12}^r\cup Z_{13}^r \cup Z_{23}^r).
	\end{align*}
	Again by Cauchy--Schwarz, we conclude that
	\[
		d(\h \mid Z_{12}\cup Z_{13} \cup Z_{23})^2 \leq \sum_{r=1}^m \frac{\ab{\Delta(Z^r_{12}\cup Z^r_{13}\cup Z^r_{23})}\cdot \prod_{\ell=4}^t \ab{Y_\ell^r}}{\ab{\Delta(Z_{12}\cup Z_{13}\cup Z_{23})}\cdot \prod_{\ell=4}^t \ab{Y_\ell}} d(\h\mid Z_{12}^r\cup Z_{13}^r \cup Z_{23}^r)^2.
	\]
	Plugging this in to the definition of $q(\p_E(K(Y_1,Y_2,Y_3)))$, we find that
	\begin{align*}
		q(\p_E(K(Y_1,Y_2,Y_3)))&=\sum_{\substack{Z_{12}\in \p_E(Y_1 \times Y_2)\\Z_{13}\in \p_E(Y_1 \times Y_3)\\Z_{23}\in \p_E(Y_2 \times Y_3)}} \frac{\ab{\Delta(Z_{12}\cup Z_{13}\cup Z_{23})}}{\ab{Y_1}\ab{Y_2}\ab{Y_3}}d(\h \mid Z_{12}\cup Z_{13}\cup Z_{23})^2\\
		&\leq \sum_{\substack{Z_{12}\in \p_E(Y_1 \times Y_2)\\Z_{13}\in \p_E(Y_1 \times Y_3)\\Z_{23}\in \p_E(Y_2 \times Y_3)}} \sum_{r=1}^m \frac{\ab{\Delta(Z^r_{12}\cup Z^r_{13}\cup Z^r_{23})}\cdot \prod_{\ell=4}^t \ab{Y_\ell^r}}{\prod_{\ell=1}^t \ab{Y_\ell}}d(\h\mid Z_{12}^r\cup Z_{13}^r \cup Z_{23}^r)^2\\
		&=\sum_{\substack{Z_{12}\in \p_E(Y_1 \times Y_2)\\Z_{13}\in \p_E(Y_1 \times Y_3)\\Z_{23}\in \p_E(Y_2 \times Y_3)}} \sum_{r=1}^m \left( \prod_{\ell=1}^t \frac{\ab{Y_\ell^r}}{\ab{Y_\ell}} \right)\frac{\ab{\Delta(Z_{12}^r\cup Z_{13}^r\cup Z_{23}^r)}}{\ab{Y_1^r}\ab{Y_2^r}\ab{Y_3^r}}d(\h\mid Z_{12}^r\cup Z_{13}^r \cup Z_{23}^r)^2\\
		&=\sum_{r=1}^m \left( \prod_{\ell=1}^t \frac{\ab{Y_\ell^r}}{\ab{Y_\ell}} \right)\sum_{\substack{\wh{Z_{12}}\in \wh{\p_E}(Y_1^r \times Y_2^r)\\\wh{Z_{13}}\in \wh{\p_E}(Y_1^r \times Y_3^r)\\\wh{Z_{23}}\in \wh{\p_E}(Y_2^r \times Y_3^r)}}\frac{\ab{\Delta(\wh{Z_{12}}\cup \wh{Z_{13}}\cup \wh{Z_{23}})}}{\ab{Y_1^r}\ab{Y_2^r}\ab{Y_3^r}}d(\h\mid \wh{Z_{12}}\cup \wh{Z_{13}} \cup \wh{Z_{23}})^2\\
		&=\sum_{r=1}^m \left( \prod_{\ell=1}^t \frac{\ab{Y_\ell^r}}{\ab{Y_\ell}} \right) q(\wh{\p_E}(K(Y_1^r , Y_2^r , Y_3^r))),
	\end{align*}
	where in the penultimate equality we use the definition of $\wh{\p_E}$, namely that the set of edge parts appearing in it is precisely the set of restrictions of edge parts of $\p_E$.

	By the same argument, the above inequality holds for all indices $1 \leq i<j<k \leq t$. Summing up over all choices for $i,j,k$, we conclude that for any $Y \in \p_V$, we have
	\begin{align*}
		q(\p_E(Y)) &=
		\sum_{1 \leq i<j<k\leq t} q(\p_E(K(Y_i, Y_j, Y_k))) \leq \sum_{1 \leq i<j<k\leq t} \sum_{r=1}^m \left( \prod_{\ell=1}^t \frac{\ab{Y_\ell^r}}{\ab{Y_\ell}} \right) q(\wh{\p_E}(K(Y_i^r , Y_j^r , Y_k^r))).
	\end{align*}
	Therefore, by the definition of $q(\p)$, we have
	\begin{align*}
		q(\p) &= \sum_{Y \in \p_V} \left( \prod_{\ell=1}^t \frac{\ab{Y_\ell}}{\ab{X_\ell}} \right)q(\p_E(Y))\\
		&\leq \sum_{Y \in \p_V}\left( \prod_{\ell=1}^t \frac{\ab{Y_\ell}}{\ab{X_\ell}} \right)
		\sum_{1 \leq i<j<k\leq t} \sum_{r=1}^m \left( \prod_{\ell=1}^t \frac{\ab{Y_\ell^r}}{\ab{Y_\ell}} \right) q(\wh{\p_E}(K(Y_i^r , Y_j^r , Y_k^r)))\\
		&= \sum_{Y \in \p_V} \sum_{r=1}^m \left( \prod_{\ell=1}^t \frac{\ab{Y_\ell^r}}{\ab{X_\ell}} \right) \sum_{1 \leq i<j<k\leq t}
		q(\wh{\p_E}(K(Y_i^r , Y_j^r , Y_k^r)))\\
		&=\sum_{Y' \in \p_V'} \left( \prod_{\ell=1}^t \frac{\ab{Y_\ell'}}{\ab{X_\ell}} \right) \sum_{1 \leq i<j<k\leq t} q(\wh{\p_E}(K(Y_i' , Y_j' , Y_k')))\\
		&=q(\wh{\p}).
	\end{align*}
	As we have already proved that $q(\wh\p) \leq q(\p')$, this concludes the proof of \cref{lemit:P refinement}.
\end{proof}

\subsection{Proof of Lemma \ref{lem:induced counting}}\label{appendix:induced counting}
We begin by recalling the statement of \cite[Corollary 5.3]{MR2373376}, specialized to the case of counting cliques in $3$-uniform hypergraphs.
\begin{theorem}[{Special case of \cite[Corollary 5.3]{MR2373376}}]\label{thm:counting}
	Let $Z$ be a $t$-partite graph on parts $X_1,\dots,X_t$, and let $\h$ be an $t$-partite $3$-graph supported on $\Delta(Z)$.

	Let $\varepsilon, \gamma\in (0, \frac 1{100}]$, and define $\eta=\varepsilon^8\gamma^{10t^3}$ and $\psi(x)=\varepsilon^{10}\gamma^{10t^3}x^{16t^2}$. Suppose that $$d(\h[X;Z;\{i,j,k\}])\geq \gamma$$ and that $\h[X;Z;\{i,j,k\}]$ is $(\eta,\psi)$-quasirandom for all $\{i,j,k\} \in \binom{[t]}3$.

	If $x_1,\dots,x_t \in X_1 \dtimes X_t$ are picked uniformly at random, then the probability that they span a $K_t\up 3$ in $\h$---that is, the probability that $x_ix_jx_k \in E(\h)$ for all $\{i,j,k\} \in \binom{[t]}3$---is at least
	\[
		(1-\varepsilon) \left(\prod_{1 \leq i<j \leq t} d(Z_{ij})\right)  \left(\prod_{1 \leq i<j<k\leq t} d(\h[X;Z;\{i,j,k\}])\right).
	\]
\end{theorem}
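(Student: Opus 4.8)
Since Theorem~\ref{thm:counting} is explicitly a special case of Gowers's hypergraph counting lemma \cite[Corollary~5.3]{MR2373376}, the shortest route is to quote that result and verify that its quasirandomness hypotheses (phrased in a slightly different but polynomially equivalent language) follow from \cref{def:quasirandom chain,def:product density}; the exponents $8$, $10$, $10t^3$, $16t^2$ are precisely what one needs for this translation together with the losses incurred inside Gowers's proof. For completeness I sketch the direct argument, which is the standard telescoping-plus-Cauchy--Schwarz proof of a relative counting lemma. The first step is to split each hyperedge indicator into a ``density part'' and a ``quasirandom part'': for $\{i,j,k\}\in\binom{[t]}{3}$ write $d_{ijk}:=d(\h[X;Z;\{i,j,k\}])$ and note that, for all $(x_i,x_j,x_k)\in X_i\times X_j\times X_k$,
\[
\1_{x_ix_jx_k\in E(\h)} = d_{ijk}\,\1_{x_ix_jx_k\in\Delta(Z_{ij}\cup Z_{ik}\cup Z_{jk})} + f_{ijk}(x_i,x_j,x_k),
\]
where, by \cref{def:quasirandom chain}, $f_{ijk}$ is supported on $\Delta(Z_{ij}\cup Z_{ik}\cup Z_{jk})$ and its octahedral sum is at most $\eta\,[d(Z_{ij})d(Z_{ik})d(Z_{jk})]^4\ab{X_i}^2\ab{X_j}^2\ab{X_k}^2$. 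Multiplying these identities over all $\binom t3$ triples and expanding yields $2^{\binom t3}$ terms, indexed by the set $S\subseteq\binom{[t]}{3}$ of triples at which the $f$-summand was taken.

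Next I would evaluate the main term $S=\emptyset$. It equals $\big(\prod_{\{i,j,k\}}d_{ijk}\big)\cdot\1_{(x_1,\dots,x_t)\text{ spans a graph }K_t\text{ in }Z}$, so its expectation is $\big(\prod d_{ijk}\big)\cdot\pr[K_t\subseteq Z]$. Since a tripartite graph is $\alpha$-quasirandom exactly when each of its three pairs is (\cref{def:graph quasirandomness}), the hypothesis forces every pair $Z_{ij}$ to be $\psi\big(\delta(Z_{ij}\cup Z_{ik}\cup Z_{jk})\big)$-quasirandom; using $\delta(\cdot)\le d(Z_{ij})$ and the monotonicity and form of $\psi$, each $Z_{ij}$ is $\big(\varepsilon^{10}\gamma^{10t^3}d(Z_{ij})^{16t^2}\big)$-quasirandom, which is far stronger than the graph counting lemma for $K_t$ requires (iterated Cauchy--Schwarz over pairs, in the spirit of \cite[Theorem~3.1]{MR2195580}). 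This gives $\pr[K_t\subseteq Z]=\big(1\pm\tfrac\varepsilon4\big)\prod_{i<j}d(Z_{ij})$, so the main term is at least $(1-\tfrac\varepsilon4)\big(\prod_{i<j}d(Z_{ij})\big)\big(\prod d_{ijk}\big)$.

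The terms with $S\ne\emptyset$ are the crux. For each such term I would fix a triple $\{i,j,k\}\in S$ carrying a factor $f_{ijk}$ and, doubling one vertex coordinate at a time, run the usual Cauchy--Schwarz (``box-norm'') argument to push $f_{ijk}$ inside the expectation and absorb every remaining factor --- each of absolute value at most $1$ on its triangle support --- into the weight; after $O(t^3)$ such doublings the sum is controlled by the octahedral quantity defining $\eta$-quasirandomness of $f_{ijk}$, together with triangle-count normalizations of $3$-subsets which the graph quasirandomness lets us replace by their expected values $\delta(\cdot)\ab{X_i}\ab{X_j}\ab{X_k}$. Tracking the halving of exponents through these $O(t^3)$ steps, plus the $O(t^2)$-fold loss on the graph side, is exactly why $\eta$ and $\psi$ must be the stated high powers of $\varepsilon$, $\gamma$ and $x$; with those choices each of the $2^{\binom t3}$ error terms is at most $2^{-\binom t3}\tfrac\varepsilon4\big(\prod_{i<j}d(Z_{ij})\big)\big(\prod d_{ijk}\big)$, and summing everything gives the claimed lower bound $(1-\varepsilon)\big(\prod_{i<j}d(Z_{ij})\big)\big(\prod d_{ijk}\big)$.

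The main obstacle is this last step: one must order the coordinate doublings so that every Cauchy--Schwarz application legitimately removes one interfering factor and the terminal quantity is a clean normalization of the octahedral sum from \cref{def:quasirandom chain}, all while carrying the graph-quasirandomness estimates (needed to turn triangle counts of $3$-subsets into the quantities $\delta(\cdot)\ab{X_i}\ab{X_j}\ab{X_k}$) through the computation so that the densities $d(Z_{ij})$ appearing in denominators never inflate the error. Since this is precisely the content of Gowers's counting lemma, in the actual write-up I would simply invoke \cite[Corollary~5.3]{MR2373376} and record how its parameters specialize to our $\eta$ and $\psi$, rather than reproducing the Cauchy--Schwarz iteration.
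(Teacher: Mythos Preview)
Your proposal is correct and takes essentially the same approach as the paper: the paper does not prove \cref{thm:counting} at all but simply states it as a direct citation of \cite[Corollary~5.3]{MR2373376}, and your plan to invoke that result is exactly what the paper does. Your additional sketch of the telescoping-plus-Cauchy--Schwarz argument goes beyond what the paper provides (and is accurate in outline), but is not needed for the paper's purposes.
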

\noindent As promised, \cref{lem:induced counting} follows as a straightforward corollary of \cref{thm:counting}.
\begin{proof}[Proof of \cref{lem:induced counting}]
	Let the three vertex parts of $\V$ be $V_1,V_2,V_3$, and let the vertices of $\V$ be $v_1,\dots,v_t$. Let $\iota:\{v_1,\dots,v_t\} \to \{1,2,3\}$ be the ``index'' function that associates to each $v_i$ the unique index $\iota(i)$ such that $v_i \in V_{\iota(i)}$.
	
	We construct a $t$-partite graph $Z$ and a $t$-partite $3$-graph $\h$ as follows. For every $v_i$, we construct a set $X_i$, which is a copy of $U_{\iota(i)}$. Thus, the parts $X_1,\dots,X_t$ consist of $\ab{V_1}$ copies of $U_1$, $\ab{V_2}$ copies of $U_2$, and $\ab{V_3}$ copies of $U_3$.

	Next, we define the $\binom t2$ bipartite graphs making up $Z$ as follows. If $\iota(v_i)\neq \iota(v_j)$, then we define $Z_{ij}$ to be a copy of $G[U_{\iota(i)} \times U_{\iota(j)}]$ between $X_i$ and $X_j$. If $\iota(i)=\iota(j)$, we define $Z_{ij}$ to be the complete bipartite graph between $X_i$ and $X_j$. Note that in this way, we ensure that each $Z_{ij}$ is $\psi(\delta)$-quasirandom, and that each tripartite subgraph satisfies $\delta(Z_{ij} \cup Z_{ik}\cup Z_{jk})\geq \delta$.

	Finally, we define each of the tripartite $3$-graphs making up $\h$. For indices $i,j,k \in \binom{[t]}3$, in case $\ab{\iota(i),\iota(j),\iota(k)}\leq 2$, we include in $\h$ all triangles in the tripartite graph $Z_{ij} \cup Z_{ik} \cup Z_{jk}$. In the remaining case, where $\iota(i),\iota(j),\iota(k)$ are all distinct, we do the following. If $\{v_i,v_j,v_k\} \in E(\V)$, we put a copy of $\h_0$ between $X_i,X_j,X_k$. However, if $\{v_i,v_j,v_k\} \notin E(\V)$, we put between these three parts the \emph{relative complement} of $\h_0$; that is, we include as a hyperedge every triangle in $Z_{ij}\cup Z_{ik} \cup Z_{jk}$ which is not a hyperedge of $\h_0$.
	
	We now claim that $\h[X;Z;\{i,j,k\}]$ is $(\eta,\psi)$-quasirandom with $d(\h[X;Z;\{i,j,k\}])\geq \gamma$ for all $i,j,k$. Indeed, for triples $i,j,k$ with $\ab{\iota(i),\iota(j),\iota(k)}\leq 2$, this is immediate since all triangles in $Z_{ij} \cup Z_{ik} \cup Z_{jk}$ are present. For the remaining triples, this follows from the fact that we assumed $\gamma \leq d(\h_0[U;G])\leq 1-\gamma$, and that a tripartite hypergraph is $\eta$-quasirandom with respect to a tripartite graph if and only if its relative complement is $\eta$-quasirandom with respect to the same tripartite graph.

	Now suppose that we pick $(x_1,\dots,x_t) \in X_1 \dtimes X_t$ uniformly at random. By \cref{thm:counting}, applied with $\varepsilon=\frac{1}{100}$, the probability that they form a $K_t\up 3$ in $\h$ is at least
	\[
		(1-\varepsilon) \left(\prod_{1 \leq i<j \leq t} d(Z_{ij})\right)  \left(\prod_{1 \leq i<j<k\leq t} d(\h[X;Z;\{i,j,k\}])\right) \geq \frac 12 \delta^{\binom t2}\gamma^{\binom t3}.
	\]
	Additionally, for a fixed $i,j$, the probability that $x_i$ and $x_j$ correspond to the same vertex of $V(\h_0)$ is at most $1/{\min \{\ab{U_1},\ab{U_2},\ab{U_3}\}}$, which is at most $\delta^{t^2}\gamma^{t^3}$ by assumption. Hence, the probability that there is any collision among $\{x_1,\dots,x_t\}$ is at most $t^2\delta^{t^2}\gamma^{t^3}$. Therefore, the probability that $x_1,\dots,x_t$ correspond to distinct vertices in $\h_0$ and span a $K_t\up 3$ in $\h$ is at least
	\[
		\frac 12 \delta^{\binom t2}\gamma^{\binom t3}-t^2 \delta^{t^2}\gamma^{t^3}> 0.
	\]
	We conclude that there is a choice of $(x_1,\dots,x_t) \in X_1 \dtimes X_t$ satisfying both of these properties.

	The final observation is that, by our construction of $\h$, these vertices $x_1,\dots,x_t$ correspond to an induced copy of $\V$ in $\h_0$. Indeed, since both $\V$ and $\h_0$ are tripartite, all we need to check is that if $x_i,x_j,x_k$ lie in distinct parts of $\h_0$, then they form a hyperedge of $\h_0$ if and only if $v_i,v_j,v_k$ form a hyperedge of $\V$. But this is immediate from our choice of $\h$, since we included in $\h$ all edges of $\h_0$ when $\{v_i,v_j,v_k\} \in E(\V)$, and only non-edges of $\h_0$ when $\{v_i,v_j,v_k\} \notin E(\V)$. This concludes the proof.
\end{proof}

\end{document}